\documentclass[a4,11pt]{amsart}
\setlength{\textwidth}{400pt}
\setlength{\textheight}{600pt}
\setlength{\oddsidemargin}{0.8cm}
\setlength{\evensidemargin}{0.8cm}

\usepackage{amssymb,amsmath,amsthm,txfonts}
\usepackage{cite}

\newtheorem{thm}{Theorem}[section]
\newtheorem{lem}[thm]{Lemma}

\newtheorem{prop}[thm]{Proposition}
\theoremstyle{definition}

\theoremstyle{remark}
\newtheorem{rem}[thm]{\textbf{Remark}}
\newtheorem{rems}[thm]{\textbf{Remarks}}

 \makeatletter
    
    \@addtoreset{equation}{section}
  \makeatother

\makeatletter
      \def\@makefnmark{%
         \leavevmode
            \raise.9ex\hbox{\check@mathfonts
                \fontsize\sf@size\z@\normalfont%
                            \@thefnmark}%
       }
      \makeatother

\newcommand{\D}{\textrm{div}}

\newcommand{\dd}{\textrm{d}}

\begin{document}

\title[]{The Navier-Stokes equations with the Neumann boundary condition in an infinite cylinder}
\author[]{K. Abe}
\date{}
\address[K. Abe]{Department of Mathematics, Graduate School of Science, Osaka City University, 3-3-138 Sugimoto, Sumiyoshi-ku Osaka, 558-8585, Japan}
\email{kabe@sci.osaka-cu.ac.jp}

\subjclass[2010]{35Q35, 35K90}
\keywords{Navier-Stokes equations, Neumann problem, Infinite cylinder}
\date{\today}

\maketitle


\begin{abstract}
We prove unique existence of local-in-time smooth solutions of the Navier-Stokes equations for initial data in $L^{p}$ and $p\in [3,\infty)$ in an infinite cylinder, subject to the Neumann boundary condition. 
\end{abstract}

\vspace{15pt}

\section{Introduction}
\vspace{10pt}
We consider the three-dimensional Navier-Stokes equations subject to the Neumann boundary condition:

\begin{equation*}
\begin{aligned}
\partial_t u-\Delta{u}+u\cdot \nabla u+\nabla{p}= 0,  \quad \D\ u&=0 \qquad \textrm{in}\ \Pi\times(0,T),  \\
\nabla\times  u\times n=0,\quad u\cdot n&=0 \qquad \textrm{on}\ \partial\Pi\times(0,T), \\
u&=u_0\hspace{18pt} \textrm{on}\ \Pi\times \{t=0\},
\end{aligned}
\tag{1.1}
\end{equation*}\\
for the infinite cylinder 

\begin{align*}
\Pi=\{ x=(x_1,x_2,x_3)\in \mathbb{R}^{3}\ |\ x_h=(x_1,x_2),\ |x_h|<1\ \}.
\end{align*}\\
Here, $n$ denotes the unit outward normal vector field on $\partial\Pi$. The local well-posedness of the Neumann problem (1.1) is established in \cite{FLR77}, \cite{Miyakawa80} for initial data in $L^{p}$, when $\Pi$ is smoothly bounded. See also \cite{Miyakawa81}, \cite{GM} for the Dirichlet problem. The purpose of this paper is to develop $L^{p}$-theory of (1.1) for the infinite cylinder $\Pi$. Let $L^{p}_{\sigma}$ denote the $L^{p}$-closure of $C_{c,\sigma}^{\infty}$, the space of all smooth solenoidal vector fields with compact support in $\Pi$. The main result of this paper is the following:  

\vspace{15pt}

\begin{thm}
For $u_0\in L^{p}_{\sigma}(\Pi)$ and $p\in [3,\infty)$, there exists $T>0$ and a unique solution $u\in C([0,T]; L^{p})\cap C^{\infty}(\overline{\Pi}\times (0,T])$ of (1.1) with the associated pressure $p\in C^{\infty}(\overline{\Pi}\times (0,T])$.
\end{thm}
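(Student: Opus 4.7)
The plan is to follow the Kato--Fujita scheme based on the Stokes semigroup. Since the slip-type condition $\nabla\times u\times n=0$, $u\cdot n=0$ is preserved under $\p$ and commutes well with $-\Delta$, I would define the Stokes operator $A_{p}=-\p\Delta$ on the domain
\[
\mathcal{D}(A_{p})=\{u\in W^{2,p}(\Pi)\cap L^{p}_{\sigma}(\Pi)\ :\ \nabla\times u\times n=0\ \text{on}\ \partial\Pi\},
\]
and show that $-A_{p}$ generates a bounded analytic $C_{0}$-semigroup on $L^{p}_{\sigma}(\Pi)$. The key preliminary tools are (i) the Helmholtz decomposition $L^{p}(\Pi)=L^{p}_{\sigma}(\Pi)\oplus \nabla \dot{W}^{1,p}(\Pi)$ with a Neumann-type potential, and (ii) the $L^{p}$--$L^{q}$ smoothing estimates
\[
\|e^{-tA_{p}}f\|_{q}\le C\, t^{-\frac{3}{2}(\frac{1}{p}-\frac{1}{q})}\|f\|_{p},\qquad \|\nabla e^{-tA_{p}}f\|_{q}\le C\, t^{-\frac{1}{2}-\frac{3}{2}(\frac{1}{p}-\frac{1}{q})}\|f\|_{p},
\]
for $3\le p\le q<\infty$. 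Both would be obtained by exploiting the product structure $\Pi=D\times\mathbb{R}$ (with $D$ the unit disk): a partial Fourier transform in $x_{3}$ reduces the corresponding resolvent problems to a one-parameter family on $D$ with spectral parameter $\lambda+|\xi_{3}|^{2}$, for which the bounded-domain $L^{p}$-theory of Farwig--Sohr and Miyakawa applies. A Mikhlin multiplier argument in $\xi_{3}$ then transfers these bounds back to $\Pi$.

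\textbf{Kato iteration.} With the semigroup in hand, I would recast (1.1) as the integral equation
\[
u(t)=e^{-tA_{p}}u_{0}-\int_{0}^{t}e^{-(t-s)A_{p}}\p\,\D(u\otimes u)(s)\,\dd s,
\]
and seek a fixed point in the Banach space
\[
X_{T}=\Bigl\{u\in C([0,T];L^{p}_{\sigma})\ :\ \sup_{0<t<T}t^{\frac{1}{2}-\frac{3}{2q}}\|u(t)\|_{q}<\infty\Bigr\}
\]
for a suitable $q>3$. The bilinear estimate needed to close the contraction follows from the gradient smoothing bound applied to $e^{-(t-s)A_{p}}\p\,\D$, together with $\|u\otimes v\|_{q/2}\le\|u\|_{q}\|v\|_{q}$. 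For $p>3$ this is strictly subcritical and yields local existence for arbitrary data by taking $T$ small. For the critical case $p=3$ one uses the classical observation that $\|e^{-tA_{3}}u_{0}\|_{q}=o(t^{-(1/2-3/(2q))})$ as $t\downarrow 0$ whenever $u_{0}\in L^{3}_{\sigma}$, so that the critical norm can be absorbed into the iteration without a smallness assumption.

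\textbf{Regularity, pressure, main obstacle.} Once a mild solution $u$ is obtained, smoothness is produced by a standard bootstrap: the Duhamel formula and the smoothing properties of $e^{-tA_{p}}$ give $u(\cdot,t)\in W^{k,p}$ for all $k\ge 0$ and $t>0$, and since the Stokes system with the slip condition is elliptic in the Agmon--Douglis--Nirenberg sense up to $\partial\Pi$, Sobolev embedding yields $u\in C^{\infty}(\overline{\Pi}\times(0,T])$; time regularity follows by differentiating the Duhamel identity. The pressure is then recovered from $\nabla p=(I-\p)(\Delta u-u\cdot\nabla u)$ and inherits the same regularity. The hardest step of the scheme is the semigroup/resolvent analysis itself: one must control the Stokes resolvent on the disk \emph{uniformly} in the shifted spectral parameter $\lambda+|\xi_{3}|^{2}$ for $\xi_{3}\in\mathbb{R}$, handling both the low-frequency degeneracy $|\xi_{3}|\to 0$ and the high-frequency regime so that the Mikhlin transference in $\xi_{3}$ produces estimates valid for every $p\in[3,\infty)$. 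Beyond this, the classical critical-exponent bookkeeping at $p=3$ is the only other delicate point.
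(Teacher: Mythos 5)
There is a genuine gap at the heart of your scheme: the resolvent analysis you propose does not reduce the way you claim. If you work with the Stokes operator $A_{p}=-\mathbb{P}\Delta$ directly, the partial Fourier transform in $x_{3}$ of the resolvent problem $\lambda u+A_{p}u=f$ is \emph{not} a one-parameter family of 2D problems with shifted spectral parameter $\lambda+\xi^{2}$. The solenoidality constraint $\partial_{1}\hat u^{1}+\partial_{2}\hat u^{2}+i\xi\hat u^{3}=0$ and the pressure couple the horizontal and vertical components through $\xi$, so the reduced system is a genuinely $\xi$-dependent \emph{reduced Stokes system} on the disk, not a standard 2D Stokes problem. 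The references you invoke (Farwig--Sohr, Miyakawa) treat the ordinary bounded-domain Stokes resolvent and do not cover this reduced system; handling it in the Dirichlet case required dedicated work by Farwig--Ri, and you cannot simply cite the standard theory.

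The paper's decisive observation — which is missing from your proposal — is that for the Neumann (perfect-slip) boundary condition the Helmholtz projection commutes with the Laplacian on its domain, $\mathbb{P}B=B\mathbb{P}$, so the Stokes operator $A$ is just the restriction of the Neumann Laplacian $B$ to $L^{p}_{\sigma}$ and $(\lambda+A)^{-1}=(\lambda+B)^{-1}$ there. One therefore resolvent-estimates $B$ (a vector Laplacian, no pressure, no divergence constraint), and \emph{this} problem decouples cleanly under partial Fourier transform: the horizontal component satisfies a 2D vector Laplace equation on the disk with the slip condition $\nabla^{\perp}\cdot v=0$, $v\cdot n=0$, and the vertical component a scalar Neumann Laplace equation, each with the shifted parameter $\mu=\lambda+\xi^{2}$. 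Uniform $\Sigma_{\theta}$-resolvent bounds for these two bounded-domain Laplace problems, together with a Mikhlin multiplier theorem in $\xi$ and a cut-off argument for large $|\lambda|$ in $L^{p}$, then give analyticity on the cylinder. Your proposal skips both the commutation lemma and the switch from Stokes to Laplace, and as written the transference step would fail. Secondary points (the use of $H^{\infty}$-calculus to make $A_{0}^{-1/2}\mathbb{P}\,\textrm{div}$ a bounded operator on $L^{p}$, and a careful time-H\"older bootstrap rather than a naive iterated-Duhamel argument to get $C^{\infty}$ in $\overline{\Pi}\times(0,T]$) are standard in spirit, and your sketch of them is acceptable once the semigroup is in place, but the foundational step needs the commutation structure.
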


\vspace{15pt}
The Neumann problem plays an important role in the theory of weak solutions to the Euler equations. When $\Pi$ is a two-dimensional bounded and simply-connected domain, global weak solutions to the Euler equations are constructed in \cite{Bardos72}, \cite{VK81}, \cite{MiyakawaYamada92} by taking a vanishing viscosity limit to (1.1). Since vorticity satisfies the homogeneous Dirichlet boundary condition subject to the Neumann boundary condition (1.1), $L^{p}$-norms of the vorticity are  uniformly bounded independently of viscosity. 

For the three-dimensional Cauchy problem, vanishing viscosity methods are applied in \cite{Swann71}, \cite{Kato72} to construct unique local-in-time solutions to the Euler equations in $\mathbb{R}^{3}$. It is unknown whether a vanishing viscosity method is applicable for domains with boundary. See \cite{EM70}, \cite{BB74}, \cite{Te75}, \cite{KatoLai} for local well-posedness results of the Euler equations. In \cite{A7}, the author studied vanishing viscosity limits of (1.1) for axisymmetric data based on the main result of this paper.

\vspace{15pt}

We outline the proof of Theorem 1.1. We extend the approach for bounded domains \cite{Miyakawa80}. We set the Laplace operator subject to the Neumann boundary condition 

\begin{equation*}
\begin{aligned}
&B u=-\Delta u\quad u\in D(B),\\
&D(B)=\{u\in W^{2,p}(\Pi)\ |\ \nabla\times u\times n=0,\ u\cdot n=0\quad \partial\Pi\  \}.
\end{aligned}
\tag{1.2}
\end{equation*}\\
When $\Pi$ is smoothly bounded, it is known that the operator $-B$ generates a $C_0$-analytic semigroup on $L^{p}$ for $p\in (1,\infty)$ \cite{Miyakawa80}, \cite{AKST}. We show analyticity of the semigroup for the infinite cylinder by using a solution formula for the resolvent problem. We then define a fractional power $B_0^{1/2}$ for the operator $B_0=B+\lambda_0$ and $\lambda_0>0$. Since the operator $B_0$ admits a bounded imaginary power \cite{Seeley71}, \cite{GHT}, the domain of the fractional power $D(B_0^{1/2})$ is continuously embedded to the Sobolev space $W^{1,p}$. 

We then define the Stokes operator as a restriction of the Laplace operator 

\begin{equation*}
\begin{aligned}
&A u= B u\quad u\in D(A),\\
&D(A)=D(B)\cap L^{p}_{\sigma}.
\end{aligned}
\tag{1.3}
\end{equation*}\\
Since the Laplace operator $B$ is commutable with the Helmholtz projection operator $\mathbb{P}$, the Stokes operator acts as an operator on the solenoidal vector space $L^{p}_{\sigma}$. By using the analyticity of the semigroup and boundedness of the Helmholtz projection operator on $L^{p}$ \cite{ST98}, we construct mild solutions $u\in C([0,T]; L^{p})$ for $u_0\in L^{p}_{\sigma}$ and $p\in[3,\infty)$ of the form

\begin{align*}
u=e^{-tA}u_0-\int_{0}^{t}e^{-(t-s)A}(\mathbb{P}u\cdot \nabla u)(s)\dd s. \tag{1.4}
\end{align*}\\

Higher regularity of mild solutions follow from elliptic estimates for the Helmholtz projection and the Stokes opeartor. We show that all derivatives of solutions belong to the H\"older space $C^{\mu}((0,T]; L^{s})$ for $\mu\in (0,1/2]$ and $s\in(3,\infty)$.\\

This paper is organized as follows. In Section 2, we show that the Laplace operator generates an analytic semigroup on $L^{p}$ for the infinite cylinder. In Section 3, we define a fractional power of the Laplace operator and prove a continuous embedding of the domain of the fractional power. In Section 4, we define the Stokes operator. In Section 5, we construct mild solutions. In Section 6, we prove higher regularity of mild solutions. In Appendix A, we prove higher regularity estimates for the Laplace operator in the infinite cylinder, used in Section 6. In Appendix B, we estimate the resolvent of the Laplace operator by a multiplier theorem.

\vspace{20pt}

\section{Resolvent estimates for the Laplace operator}
\vspace{10pt}

We start with a resolvent estimate for the Laplace operator subject to the Neumann boundary condition in the infinite cylinder. We derive a solution formula for the Neumann problem by using the resolvent of two-dimensional problems.

\vspace{15pt}

\subsection{A solution formula}

We consider the resolvent problem

\begin{equation*}
\begin{aligned}
\lambda u-\Delta u&=f\quad \textrm{in}\ \Pi,\\
\nabla\times u\times n=0,\ u\cdot n&=0\quad \textrm{on}\ \partial\Pi,
\end{aligned}
\tag{2.1}
\end{equation*}\\
for $\lambda\in \Sigma_{\theta}=\{\lambda\in \mathbb{C}\backslash \{0\}\ |\ |\textrm{arg}\lambda|<\theta  \}$ and $\theta\in (\pi/2,\pi)$. We use a solution formula and estimate the resolvent of (2.1). We consider two-dimensional problems in a unit disk $D=\{x\in \mathbb{R}^{2}\ |\ |x|<1 \}$:

\begin{equation*}
\begin{aligned}
\mu v-\Delta v&=g\quad \textrm{in}\ D,\\
\nabla^{\perp}\cdot v=0,\ v\cdot n&=0\quad \textrm{on}\ \partial D,
\end{aligned}
\tag{2.2}
\end{equation*}
\begin{equation*}
\begin{aligned}
\mu w-\Delta w&=h\quad \textrm{in}\ D,\\
\partial_n w&=0\quad \textrm{on}\ \partial D,
\end{aligned}
\tag{2.3}
\end{equation*}\\
for $\mu\in \Sigma_{\theta}$. Here, $\nabla^{\perp}={}^{t}(\partial_2,-\partial_1)$. We denote by $B_{i}=-\Delta$ the Laplace operators associated with the boundary conditions in (2.2) and (2.3) for $i=1,2$, respectively. We begin with the problem (2.2).

\vspace{15pt}

\begin{prop}
Let $p\in (1,\infty)$ and $\theta\in (\pi/2,\pi)$. There exists a constant $C$ such that for $g\in L^{p}(D)$ and $\mu\in \Sigma_{\theta}$ there exists a unique solution $v\in W^{2,p}(D)$ of (2.2) satisfying 

\begin{align*}
|\mu| ||v||_{L^{p}(D)}+|\mu|^{\frac{1}{2}}||\nabla v||_{L^{p}(D)}+||\nabla^{2}v||_{L^{p}(D)}\leq C||g||_{L^{p}(D)}.   \tag{2.4}
\end{align*}\\
The operator $B_1$ is invertible on $L^{p}(D)$. 
\end{prop}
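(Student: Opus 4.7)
I would reduce the vector resolvent problem (2.2) to two scalar resolvent problems on $D$ via the divergence $d=\nabla\cdot v$ and the ``vorticity'' $\omega=\nabla^{\perp}\cdot v$. The two-dimensional identity $\Delta v = \nabla d + \nabla^{\perp}\omega$ recasts (2.2) as
\[
\mu v = g + \nabla d + \nabla^{\perp}\omega,
\]
and applying $\nabla\cdot$ and $\nabla^{\perp}\cdot$ yields scalar equations $\mu d-\Delta d=\nabla\cdot g$ and $\mu\omega-\Delta\omega=\nabla^{\perp}\cdot g$ in $D$. The boundary condition $\nabla^{\perp}\cdot v=0$ becomes $\omega|_{\partial D}=0$ (Dirichlet). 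Taking the normal component of the displayed identity on $\partial D$, together with $v\cdot n=0$ and the observation that $\omega|_{\partial D}=0$ forces $\nabla^{\perp}\omega\cdot n=\partial_{\tau}\omega=0$, yields the Neumann condition $\partial_{n}d=-g\cdot n$.

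The crucial step is the uniform-in-$\mu$ estimate
\[
\|\nabla\omega\|_{L^{p}(D)}+\|\nabla d\|_{L^{p}(D)}\leq C\|g\|_{L^{p}(D)},\qquad \mu\in\Sigma_{\theta}.
\]
For $\omega$ this is the $L^{p}(D)$-boundedness of $\nabla(\mu-\Delta_{\mathrm{Dir}})^{-1}\nabla^{\perp}$ with $\Delta_{\mathrm{Dir}}$ the scalar Dirichlet Laplacian, which I would obtain by factoring
$\nabla(\mu-\Delta_{\mathrm{Dir}})^{-1}\nabla^{\perp} = [\nabla(\mu-\Delta_{\mathrm{Dir}})^{-1/2}]\cdot[(\mu-\Delta_{\mathrm{Dir}})^{-1/2}\nabla^{\perp}]$; each factor is a Riesz-transform-type operator bounded on $L^{p}(D)$ uniformly in $\mu\in\Sigma_{\theta}$ by the bounded $H^{\infty}$-calculus of the scalar Laplacian on the smooth bounded domain $D$. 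A parallel argument using the Neumann Laplacian $B_{2}$, with the weak formulation absorbing the boundary datum $\partial_{n}d=-g\cdot n$ into the bilinear form, handles $d$.

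Substituting into $\mu v=g+\nabla d+\nabla^{\perp}\omega$ gives $|\mu|\|v\|_{L^{p}}\leq C\|g\|_{L^{p}}$. For $\|\nabla^{2}v\|_{L^{p}}$ I would invoke Agmon--Douglis--Nirenberg elliptic regularity: the principal part is the diagonal Laplacian, and at each boundary point the frozen-coefficient boundary system decouples into a Dirichlet condition on the normal component of $v$ and a Neumann condition on the tangential component, so the complementing condition is satisfied and $\|v\|_{W^{2,p}}\leq C(\|g\|_{L^{p}}+\|v\|_{L^{p}})$; for $|\mu|\geq 1$ this yields $\|v\|_{W^{2,p}}\leq C\|g\|_{L^{p}}$, and Gagliardo--Nirenberg interpolation then gives the intermediate bound $|\mu|^{1/2}\|\nabla v\|_{L^{p}}\leq C\|g\|_{L^{p}}$. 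For small $|\mu|$, (2.4) would follow from the invertibility of $B_{1}$ and the continuity of $(\mu+B_{1})^{-1}$ at $\mu=0$.

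Invertibility of $B_{1}$ reduces, by the Fredholm alternative (applicable since $B_{1}$ has compact resolvent on the bounded domain), to showing $\ker B_{1}=\{0\}$. For $v\in\ker B_{1}$, I decompose $v=\nabla\phi+\nabla^{\perp}\psi$ (Helmholtz decomposition in the simply-connected disk) with $\partial_{n}\phi=0$ and $\psi|_{\partial D}=0$; the equation $\Delta v=0$ gives $\nabla(\Delta\phi)+\nabla^{\perp}(\Delta\psi)=0$, so $\Delta\phi-i\Delta\psi$ is holomorphic in $D$. The boundary condition $\nabla^{\perp}\cdot v|_{\partial D}=\Delta\psi|_{\partial D}=0$ combined with Dirichlet uniqueness forces $\Delta\psi\equiv 0$; then $\Delta\phi$ is a real constant which must vanish by the Neumann compatibility $\int_{D}\Delta\phi=\int_{\partial D}\partial_{n}\phi=0$, so $\phi$ is constant and $\psi=0$, i.e.\ $v=0$. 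The main difficulty of the argument is securing the uniform-in-$\mu$ Riesz bounds above: without them the $(\omega,d)$-reduction only yields the sub-optimal $|\mu|^{1/2}\|v\|_{L^{p}}\leq C\|g\|_{L^{p}}$ rather than the required $|\mu|\|v\|_{L^{p}}\leq C\|g\|_{L^{p}}$.
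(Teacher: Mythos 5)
Your route is genuinely different from the paper's. The paper cites parameter‑elliptic (Agmon--Agranovich--Vishik/ADN) results for $|\mu|\geq\delta$ and only supplies the new content, namely the small‑$|\mu|$ case, by a compactness--contradiction argument: a hypothetical normalized sequence converges by Rellich--Kondrachov to a nontrivial kernel element of $B_1$, which is then ruled out exactly as in your kernel analysis (harmonic $\nabla^\perp\cdot v$ vanishing on $\partial D$, then $\mathrm{div}\,v=0$, then a stream function). Your Fredholm‑plus‑kernel treatment of invertibility is a correct and essentially equivalent variant of this; the $(\omega,d)$ reduction and the boundary conditions $\omega|_{\partial D}=0$, $\partial_n d=-g\cdot n$ you derive are also sound.

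The gap is in what you yourself identify as the crucial step. You factor
$\nabla(\mu+L)^{-1}\nabla^{\perp}=\bigl[\nabla(\mu+L)^{-1/2}\bigr]\bigl[(\mu+L)^{-1/2}\nabla^{\perp}\bigr]$
and assert that each factor is bounded on $L^{p}(D)$ uniformly in $\mu\in\Sigma_{\theta}$ \emph{``by the bounded $H^{\infty}$-calculus of the scalar Laplacian.''} This is not a valid justification. Bounded $H^{\infty}$-calculus controls $\|f(L)\|$ for $f\in H^{\infty}$, and it does give the uniform bound $\|L^{1/2}(\mu+L)^{-1/2}\|\leq C$ for $\mu\in\Sigma_\theta$, because $z\mapsto z^{1/2}(\mu+z)^{-1/2}$ lies in $H^{\infty}(\Sigma_{\phi})$ for $\phi<\pi-\theta$ uniformly in $\mu$. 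But $\nabla$ is not a function of $L$, so $\nabla L^{-1/2}$ is outside the reach of the functional calculus; its $L^{p}(D)$-boundedness is the Riesz‑transform estimate for the Dirichlet (respectively Neumann) Laplacian on a bounded domain, a separate and nontrivial result that must be cited or proved. Without it, the factorization produces only the interpolated scaling and you recover the sub‑optimal bound you mention. There are also smaller loose ends on the $d$ side: $B_2$ has nontrivial kernel, the boundary datum $\partial_n d=-g\cdot n$ is inhomogeneous, and $\nabla\cdot g\in W^{-1,p}$ only, so the weak formulation and solvability (the compatibility $\mu\int_D d=0$ for $\mu\neq 0$) need to be spelled out. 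In short, your plan is coherent and arguably more self‑contained in spirit, but it trades the paper's citation of parameter‑elliptic ADN theory for a uniform Riesz‑transform bound that the $H^{\infty}$-calculus alone does not deliver; you would need to import that bound as an additional ingredient, at which point the ADN citation the paper uses is the shorter path.
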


\vspace{5pt}

\begin{proof}
The assertion is proved in \cite[Theorem 3.10]{Miyakawa80}, \cite[p.404, l.8]{MiyakawaYamada92}, \cite[Theorem 1.2]{AKST} for $|\mu|\geq \delta$ and arbitrary $\delta>0$. We shall prove (2.4) for $|\mu|\leq \delta$.

We first prove the a priori estimate

\begin{align*}
||v||_{W^{1,p}(D)}\leq C||g||_{L^{p}(D)}   \tag{2.5}
\end{align*}\\
for solutions of (2.2) for $\mu=0$ by a contradiction argument. Suppose on the contrary that (2.5) were false. Then there exists a sequence of functions $\{v_m\}$ satisfying (2.2) for $\mu=0$ and $g_m$ such that 

\begin{align*}
||v_m||_{W^{1,p}(D)}=1,\quad ||g_m||_{L^{p}(D)}\to 0\quad m\to\infty.
\end{align*}\\
Since the estimate (2.4) holds for $\mu=1$, applying (2.4) for $v_m-\Delta v_m=g_m+v_m$ implies that $\{v_m\}$ is uniformly bounded in $W^{2,p}$. Thus by the Rellich-Kondrachov theorem \cite[5.7 THEOREM 1]{E}, there exists a subsequence (still denoted by $\{v_m\}$) such that $v_m$ converges to a limit $v$ in $W^{1,p}$ and the limit $v$ satisfies 

\begin{align*}
\Delta v&=0\quad \textrm{in}\ D,\\
\nabla^{\perp}\cdot v =0\quad v\cdot n&=0\quad \textrm{on}\ \partial D. 
\end{align*}\\
Since $\nabla^{\perp}\cdot v$ is harmonic and vanishes on $\partial D$, we have $\nabla^{\perp}\cdot v=0$ in $D$. Moreover, by $-\Delta v= \nabla^{\perp}(-\nabla^{\perp}\cdot v)-\nabla \D\ v$ and $v\cdot n=0$ on $\partial D$, we see that $\D\ v=0$ in $D$. Since $D$ is simply-connected, there exists a stream function $\psi$ such that $v=\nabla^{\perp}\psi$. Since $\psi$ is harmonic and constant on $\partial D$, we have $v\equiv 0$. This contradicts $||v||_{W^{1,p}}=1$. Hence (2.5) holds. By (2.5) and (2.4) for $\mu=1$, we obtain 

\begin{align*}
||v||_{W^{2,p}(D)}\leq C ||g||_{L^{p}(D)}  \tag{2.6}
\end{align*}\\
for solutions of (2.2) for $\mu=0$. 

By applying (2.6) for solutions of (2.2) for $|\mu|\leq \delta $, we obtain 

\begin{align*}
||v||_{W^{2,p}(D)}\leq C||g||_{L^{p}(D)}+C\delta ||v||_{L^{p}(D)},
\end{align*}\\
with the constant $C$ independent of $\delta$. We take a small constant $\delta>0$ such that $C\delta \leq 1/2$ and obtain $||v||_{W^{2,p}}\leq C||g||_{L^{p}}$. Thus (2.4) holds for $|\mu|\leq \delta$.
\end{proof}

\vspace{15pt}

We next estimate the resolvent of the Neumann problem (2.3). 

\vspace{15pt}

\begin{prop}
There exists a constant $C$ such that for $h\in L^{p}(D)$ and $\mu\in \Sigma_{\theta}$ there exists a unique solution $w\in W^{2,p}(D)$ of (2.3) satisfying 

\begin{align*}
|\mu| ||w||_{L^{p}(D)}+|\mu|^{\frac{1}{2}}||\nabla w||_{L^{p}(D)}+||\nabla^{2}w||_{L^{p}(D)}\leq C||h||_{L^{p}(D)}.   \tag{2.7}
\end{align*}
\end{prop}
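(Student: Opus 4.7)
The plan is to follow the scheme of Proposition 2.1, the only essential new feature being that the Neumann Laplacian $B_2=-\Delta$ on $D$ has a nontrivial kernel consisting of constants. This forces us to work modulo constants whenever $\mu$ is small, but changes nothing otherwise. The uniqueness asserted in the statement rests on the fact that $\mu\neq 0$: integrating (2.3) over $D$ and using $\partial_n w=0$ uniquely determines the mean of $w$. For $|\mu|\geq \delta$ with arbitrary $\delta>0$, the estimate (2.7) is the classical resolvent bound for the Neumann Laplacian on a bounded smooth planar domain and follows from the same references as in Proposition 2.1, so it suffices to prove (2.7) for $|\mu|\leq \delta$.

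For $|\mu|\leq \delta$ with $\mu\neq 0$, I would decompose $w=\bar w+w'$ and $h=\bar h+h'$, where a bar denotes the mean over $D$ and the prime the mean-zero part. Integrating (2.3) gives $\mu\bar w=\bar h$, hence $\bar w=\bar h/\mu$, $|\mu||\bar w|\leq |\bar h|\leq C\|h\|_{L^p}$ and $\nabla\bar w=\nabla^2\bar w=0$, so it remains to estimate $w'$, which solves (2.3) with right hand side $h'$, satisfies $\partial_n w'=0$ and $\int_D w'=0$. Exactly as in Proposition 2.1, I would first establish the a priori estimate $\|w'\|_{W^{1,p}}\leq C\|h'\|_{L^p}$ at $\mu=0$ by a blow-up argument: if it failed, a sequence $\{w_m'\}$ with $\|w_m'\|_{W^{1,p}}=1$, $\int_D w_m'=0$ and $\|h_m'\|_{L^p}\to 0$ would be uniformly bounded in $W^{2,p}$ (apply (2.7) at $\mu=1$ to $w_m'-\Delta w_m'=h_m'+w_m'$); by Rellich--Kondrachov a subsequence would converge in $W^{1,p}$ to a harmonic $w'$ with $\partial_n w'=0$ and $\int_D w'=0$, forcing $w'\equiv 0$ and contradicting $\|w'\|_{W^{1,p}}=1$. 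Here the mean-zero constraint plays the role that the stream-function argument played in Proposition 2.1. Combining with (2.7) at $\mu=1$ upgrades this to $\|w'\|_{W^{2,p}}\leq C\|h'\|_{L^p}$ at $\mu=0$, and a perturbation step absorbing $C\delta\|w'\|_{L^p}$ into the left hand side for $\delta$ small yields $\|w'\|_{W^{2,p}}\leq C\|h'\|_{L^p}$ for all $|\mu|\leq\delta$. Since $|\mu|$ is bounded, this automatically controls the $|\mu|\|w'\|_{L^p}$ and $|\mu|^{\frac12}\|\nabla w'\|_{L^p}$ terms, giving (2.7).

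The main obstacle beyond the routine is really just the careful separation into mean and mean-zero parts; once that is done, every step is a direct analogue of the corresponding step in Proposition 2.1. For the uniqueness claim in the statement, given two solutions of (2.3) their difference $\tilde w$ has mean zero (since $\mu\neq 0$ fixes the mean) and solves the homogeneous Neumann equation, so by the a priori estimate just established it must vanish identically.
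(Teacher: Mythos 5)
Your proof is correct, but it follows a different route from the paper's. The paper's proof of Proposition 2.2 departs from the compactness argument used for Proposition 2.1: it first establishes (2.7) for $p=2$ by integration by parts together with the $W^{2,p}$-estimate (2.10) for the stationary Neumann problem in \cite{LM}, then reaches $p\in(2,\infty)$ by interpolating with Gagliardo--Nirenberg (exploiting boundedness of $D$ so that $\|h\|_{L^2}\leq C\|h\|_{L^p}$), and finally treats $p\in(1,2)$ by duality. You instead remove the kernel of the Neumann Laplacian explicitly by splitting $w=\bar{w}+w'$ into mean and mean-zero parts, dispose of $\bar{w}=\bar{h}/\mu$ by hand, and then run on $w'$ the same blow-up/compactness scheme the paper uses for Proposition 2.1, with the mean-zero constraint replacing the stream-function step. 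The payoff of your route is uniformity in $p$: a single argument works for all $p\in(1,\infty)$ and it is more self-contained, requiring only the $|\mu|\geq\delta$ resolvent bound rather than the extra elliptic estimate (2.10) and separate interpolation/duality steps. The paper's route avoids the compactness argument and is shorter once (2.10) is granted, but needs the three-case discussion in $p$. Both yield the stated uniqueness for $\mu\neq 0$ from the a~priori estimate (or, in your formulation, from $\mu\bar{\tilde w}=0$ forcing $\bar{\tilde w}=0$ and the mean-zero estimate killing $\tilde w'$).
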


\vspace{5pt}

\begin{proof}
Since the estimate (2.7) hold for $|\mu|\geq \delta$ and arbitrary $\delta>0$ \cite{ADN}, \cite[Theorem 3.1.2, 3.1.3]{Lunardi}, we prove it for $|\mu|\leq \delta$.

For $p=2$, integration by parts yields

\begin{align*}
|\mu| ||w||_{2}+|\mu|^{\frac{1}{2}} ||\nabla w||_{2}\leq C||h||_{2}  \tag{2.8}
\end{align*}\\
with some constant $C$, independent of $\mu$. We consider the Neumann problem  

\begin{equation*}
\begin{aligned}
-\Delta p&=f\quad \textrm{in}\  D, \\
\partial_n p&=0\quad \textrm{in}\ \partial D,
\end{aligned}
\tag{2.9}
\end{equation*}\\
for average-zero functions $f\in L^{p}$, i.e. $\int_{D}f\dd x=0$. Solutions of (2.9) uniquely exist up to an additive constant and satisfy the estimate 

\begin{align*}
||\nabla p||_{W^{1,p}(D)}\leq C||f||_{L^{p}(D)},   \tag{2.10}
\end{align*}\\
by \cite{LM}. Applying (2.10) for $-\Delta w=h-\mu w$ yields the estimate (2.7) for $p=2$.

We next consider the case $p\in (2,\infty)$. We apply the Gagliardo-Nirenberg inequality in $\mathbb{R}^{2}$ for an extension of $\varphi\in H^{1}(D)$ to $\mathbb{R}^{2}$ and observe that the inequality

\begin{align*}
||\varphi||_{L^{p}(D)}\leq C||\varphi||_{L^{2}(D)}^{\frac{2}{p}}||\varphi||_{H^{1}(D)}^{1-\frac{2}{p}}
\end{align*}\\
holds. Applying the above inequality for $\varphi=w$ and (2.8) imply

\begin{align*}
||w||_{L^{p}(D)}
\leq \frac{C'}{|\mu|}||h||_{L^{2}(D)},
\end{align*}\\
for $|\mu|\leq \delta$. Since $D$ is bounded, the right-hand side is estimated by the $L^{p}$-norm of $h$. We estimate $\nabla w$ by the same way and apply (2.10) to obtain (2.7).

For $p\in (1,2)$, a duality argument and (2.10) yield the estimate (2.7) for $|\mu|\leq \delta$. We proved (2.7) for $p\in (1.\infty)$.
\end{proof}

\vspace{15pt}
We now derive a solution formula for the problem (2.1). We use the cylindrical coordinate $x_1=r\cos\theta$, $x_2=r\sin\theta$, $x_3=z$ and decompose a vector field $f=f^{r}e_{r}(\theta)+f^{\theta}e_{\theta}(\theta)+f^{z}e_{z}$ by the basis $e_{r}(\theta)={}^{t}(\cos\theta,\sin\theta,0)$, $e_{\theta}(\theta)={}^{t}(-\sin\theta,\cos\theta,0)$, $e_{z}={}^{t}(0,0,1)$. In the sequel, we write the horizontal component by $f_{h}=f^{r}e_{r}+f^{\theta}e_{\theta}$. We define a partial Fourier transform $\hat{u}={\mathcal{F}}u$ by

\begin{align*}
({\mathcal{F}}u)(x_h,\xi)=\int_{\mathbb{R}}e^{-ix_3\xi}u(x_h,x_3)\dd x_{3},
\end{align*}\\
for functions $u(\cdot, x_3)$ in the Schwartz class $\mathit{S}(\mathbb{R}; X)$ for a Banach space $X$. See \cite[Chapter 6]{BL}.

\vspace{15pt}

\begin{prop}
For $f\in C^{\infty}_{c}(\Pi)$, solutions of (2.1) are represented by $u=u_h+u^{z}e_{z}$ and 

\begin{equation*}
\begin{aligned}
u_{\textrm{h}}&={\mathcal{F}}^{-1}(\lambda+\xi^{2}+B_1)^{-1}{\mathcal{F}}f_{h},\\
u^{z}&={\mathcal{F}}^{-1}(\lambda+\xi^{2}+B_2)^{-1}{\mathcal{F}}f^{z},
\end{aligned}
\tag{2.11}
\end{equation*}\\
where ${\mathcal{F}}^{-1}$ denotes the Fourier inverse transform. 
\end{prop}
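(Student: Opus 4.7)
The plan is to reduce (2.1) to the one-parameter families of two-dimensional problems (2.2) and (2.3) by taking the partial Fourier transform $\mathcal{F}$ in the axial variable $x_3$, exploiting the product structure $\Pi = D\times\mathbb{R}$. Since $\Delta=\Delta_h+\partial_3^{2}$, the transform sends the resolvent operator $\lambda-\Delta$ to the multiplier $(\lambda+\xi^{2})-\Delta_h$, so (2.1) formally becomes, for each $\xi\in\mathbb{R}$, a 2D resolvent problem on $D$ at the shifted spectral parameter $\lambda+\xi^{2}$.

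First I would verify that the 3D boundary conditions decouple componentwise into the boundary conditions of (2.2) on $u_h$ and of (2.3) on $u^{z}$. The outward normal on $\partial\Pi=\partial D\times\mathbb{R}$ is purely horizontal, $n=e_r$, so $u\cdot n=0$ reduces to $u_h\cdot n=0$ on $\partial D$ for every $x_3$. The tangential directions at $\partial\Pi$ are spanned by $e_\theta$ and $e_z$, and $\nabla\times u\times n=0$ is equivalent to the vanishing of $(\nabla\times u)\cdot e_z$ and $(\nabla\times u)\cdot e_\theta$ at the boundary. A direct computation gives $(\nabla\times u)\cdot e_z=-\nabla^{\perp}\cdot u_h$, matching (2.2), and $(\nabla\times u)\cdot e_\theta=\partial_3 u^{r}-\partial_n u^{z}$; since $u^{r}\equiv 0$ on $\partial D\times\mathbb{R}$ forces $\partial_3 u^{r}=0$ there, the second condition collapses to $\partial_n u^{z}=0$, matching (2.3).

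Second, since $f\in C_c^{\infty}(\Pi)$, its partial Fourier transform $\hat f(\cdot,\xi)$ belongs to the Schwartz class $\mathcal{S}(\mathbb{R}; C^{\infty}(\overline{D}))$. For every $\xi\in\mathbb{R}$ the shifted parameter $\mu=\lambda+\xi^{2}$ lies again in $\Sigma_\theta$, because $\theta\in(\pi/2,\pi)$ makes the sector convex and contain $[0,\infty)$. Propositions 2.1 and 2.2 then produce, componentwise, unique $W^{2,p}(D)$-valued solutions $\hat u_h(\cdot,\xi)$ and $\hat u^{z}(\cdot,\xi)$ defined by the 2D resolvents, together with bounds decaying like $(|\lambda|+\xi^{2})^{-1}$ in $L^{p}$ and $(|\lambda|+\xi^{2})^{-1/2}$ in $W^{1,p}$. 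Combined with the Schwartz decay of $\hat f$ in $\xi$, this makes the inverse Fourier integrals in (2.11) absolutely convergent in $W^{2,p}(D)$ uniformly in $x_3$.

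Finally, differentiating under the integral sign (justified by the same decay) shows that the $u$ built from (2.11) satisfies $\lambda u-\Delta u=f$ in $\Pi$ and, by Step 1, the full boundary conditions in (2.1). The main technical obstacle is ensuring that the Fourier inversion is legitimate at the $W^{2,p}$ level along the whole ray $\xi\in\mathbb{R}$; this is precisely where one needs the uniformity of the resolvent estimates of Propositions 2.1 and 2.2 across the sector $\Sigma_\theta$, and in particular their validity for arbitrarily small $|\mu|$ as $\xi\to 0$ when $\lambda$ itself is small, a case handled by the small-$|\mu|$ extensions proved in those propositions.
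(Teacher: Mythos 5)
Your proof follows essentially the same route as the paper: decouple the Neumann boundary conditions on $\partial\Pi$ into the two-dimensional conditions for $u_h$ and $u^{z}$, take the partial Fourier transform in $x_3$, and identify the transformed components as solutions of the 2D resolvent problems (2.2), (2.3) at parameter $\mu=\lambda+\xi^{2}$ via Propositions 2.1 and 2.2. One small slip: $\Sigma_\theta$ is not convex when $\theta>\pi/2$; the correct reason that $\lambda+\xi^{2}\in\Sigma_\theta$ is simply that adding a nonnegative real to $\lambda\in\Sigma_\theta$ can only decrease $|\arg\lambda|$.
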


\begin{proof}
Let $u=u_h+u^{z}e_{z}$ be a solution of (2.1). Since 

\begin{align*}
\nabla \times u=\Big(-\partial_z u^{\theta}+\frac{1}{r}\partial_{\theta}u^{z}\Big)e_{r}+\Big(\partial_z u^{r}-\partial_r u^{z} \Big)e_{\theta}
+\Big(\partial_r u^{\theta}+\frac{1}{r}u^{\theta}-\frac{1}{r}\partial_{\theta}u^{r} \Big)e_{z},
\end{align*}\\
the Neumann boundary condition in (2.1) implies 

\begin{align*}
u^{r}=0,\quad \partial_ru^{\theta}+u^{\theta}=0,\quad \partial_r u^{z}=0 \quad\textrm{on}\ \{r=1\}. 
\end{align*}\\
Thus $u_{h}$ and $u^{z}$ satisfy 

\begin{equation*}
\begin{aligned}
\lambda u_h-\Delta u_h&=f_h\quad \textrm{in}\ \Pi,\\
\nabla^{\perp}\cdot u_h=0,\ u_h\cdot n&=0\hspace{15pt} \textrm{on}\ \partial \Pi,\\
\lambda u^{z}-\Delta u^{z}&=f^{z}\hspace{12pt} \textrm{in}\ \Pi,\\
\partial_n u^{z}&=0\hspace{17pt} \textrm{on}\ \partial \Pi.
\end{aligned}
\end{equation*}\\
We consider the partial Fourier transform for $u_h$ and $u^{z}$. Since $\hat{u}_h$ and $\hat{u}^{z}$ satisfy (2.2) and (2.3) for $\mu=\lambda +\xi^{2}$, $g=\hat{f}_{h}$, $h=\hat{f}_{z}$, we see that $\hat{u}_h=(\mu+B_1)^{-1}\hat{f}_{h}$ and $\hat{u}^{z}=(\mu+B_2)^{-1}\hat{f^{z}
}$ by Propositions 2.1 and 2.2. By the Fourier inverse transform, we obtain (2.11).
\end{proof}

\vspace{15pt}

\subsection{$L^{2}$-estimates}

By using the formula (2.11), we construct unique solutions of (2.1).
\vspace{15pt}

\begin{lem}
There exist constants $\delta>0$ and $C>0$ such that for $f\in L^{p}$ and $\lambda\in  \Sigma_{\theta}$ satisfying $|\lambda|\geq \delta$, there exists a unique solution of (2.1) satisfying 

\begin{align*}
|\lambda|||u||_{L^{p}(\Pi)}+|\lambda|^{\frac{1}{2}} ||\nabla u||_{L^{p}(\Pi)}+ ||\nabla^{2} u||_{L^{p}(\Pi)}\leq C||f||_{L^{p}(\Pi)}.   \tag{2.12}
\end{align*}
\end{lem}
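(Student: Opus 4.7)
The plan is to combine the solution formula (2.11) of Proposition 2.3 with the two-dimensional resolvent bounds (2.4) and (2.7), and then to invert the partial Fourier transform in $x_3$ using either Plancherel's theorem (when $p=2$) or an operator-valued Fourier multiplier theorem on $L^p(D)$ (otherwise). Throughout, fix $\theta\in(\pi/2,\pi)$ and write $\mu=\lambda+\xi^{2}$.

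The first step is the elementary observation that $\mu\in\Sigma_{\theta}$ for every $\xi\in\mathbb{R}$ whenever $\lambda\in\Sigma_{\theta}$: since $\theta>\pi/2$, adding the non-negative real number $\xi^{2}$ pushes $\lambda$ toward the positive real axis. A short geometric argument strengthens this to
$$|\mu|\geq c(\theta)\bigl(|\lambda|+\xi^{2}\bigr).$$
Consequently Propositions 2.1 and 2.2 furnish symbol bounds for $(\mu+B_{i})^{-1}$ on $L^{p}(D)$ that are uniform in $\xi$; in particular, for $k\in\{0,1,2\}$,
$$(|\lambda|+\xi^{2})^{1-k/2}\bigl\|\nabla_{h}^{k}\hat{u}(\cdot,\xi)\bigr\|_{L^{p}(D)}\leq C\|\hat{f}(\cdot,\xi)\|_{L^{p}(D)}.$$

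Next, for $p=2$, I would apply Plancherel in $x_3$, which reduces (2.12) to verifying that the scalar symbols $|\lambda|/|\mu|$, $|\lambda|^{1/2}|\xi|/|\mu|$, $\xi^{2}/|\mu|$, $|\lambda|^{1/2}|\mu|^{-1/2}$ and $|\xi||\mu|^{-1/2}$ are uniformly bounded, all of which is immediate from $|\mu|\geq c(|\lambda|+\xi^{2})$. For $p\in(1,\infty)$ with $p\neq 2$, I would instead invoke an operator-valued Mikhlin-type multiplier theorem on the UMD space $L^{p}(D)$; its hypotheses are verified by differentiating $(\mu+B_{i})^{-1}$ in $\xi$ via $\partial_{\xi}(\mu+B_{i})^{-1}=-2\xi(\mu+B_{i})^{-2}$ and iterating the resolvent bounds of Propositions 2.1 and 2.2. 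This $L^{p}$ step is the principal obstacle and is precisely the content of Appendix B; the restriction $|\lambda|\geq\delta$ enters here to keep the derivatives $\partial_{\xi}^{j}$ of the symbol uniformly controlled as $\xi\to 0$.

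Finally, uniqueness follows at once from (2.12) applied to the difference of two solutions with $f=0$, while existence for general $f\in L^{p}$ is obtained by approximating $f$ in $L^{p}$ by elements of $C^{\infty}_{c}(\Pi)$, applying Proposition 2.3 to each approximant, and passing to the limit using the a priori bound (2.12).
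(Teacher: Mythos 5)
Your first step --- checking that $\mu=\lambda+\xi^{2}\in\Sigma_{\theta}$ with $|\mu|\gtrsim|\lambda|+\xi^{2}$ and feeding this into Propositions 2.1 and 2.2 --- is correct, and for $p=2$ the Plancherel/Mikhlin route is exactly what the paper does (Proposition 2.5). For $p\neq 2$, however, you have taken a genuinely different route from the paper, and your version has gaps that the paper's approach is designed to avoid.

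The paper does \emph{not} prove (2.12) for $p\neq 2$ by an operator-valued multiplier theorem. Instead it localizes in $x_{3}$ by a partition of unity $\{\varphi_{j}\}$, reduces to the bounded cylinders $G_{j}=D\times(j-1,j+1)$, invokes the $L^{p}$-resolvent estimate for the Neumann problem on smoothly bounded domains from Proposition 2.7, and sums in $j$ (Proposition 2.8). The commutator terms produced by $\nabla\varphi_{j}$ are absorbed into the left-hand side precisely when $|\lambda|\geq\delta$ is large --- that is where the restriction $|\lambda|\geq\delta$ actually comes from, not from the multiplier estimate. Indeed, the multiplier argument of Appendix~B proves (2.15) for \emph{all} $\lambda\in\Sigma_{\theta}$ with no smallness restriction (Lemma B.1), so your explanation of the role of $\delta$ is incorrect.

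Two concrete obstacles stand in the way of carrying out your multiplier plan for the full estimate (2.12). First, the Neumann Laplacian $B_{2}$ on the disk has $0$ in its spectrum, so it is not sectorial and its resolvent family is not $R$-bounded near $\mu=0$; the paper handles this by splitting $f^{z}$ into its fiber average $f_{2}(x_{3})=\int_{D}f^{z}\,\dd x_{h}$ (a scalar ODE problem) and the remainder $f_{1}$ with zero mean in $D$, to which the sectorial restriction $\tilde{B}_{2}$ applies (Propositions B.3, B.4). Your sketch never mentions this. Second, and more seriously, (2.12) contains the terms $|\lambda|^{1/2}\|\nabla u\|_{L^{p}}$ and $\|\nabla^{2}u\|_{L^{p}}$, which after Fourier transform lead to symbols such as $|\mu|^{1/2}\nabla_{h}(\mu+B_{i})^{-1}$ and $\nabla_{h}^{2}(\mu+B_{i})^{-1}$. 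The $R$-boundedness of these families does \emph{not} follow from BIP or $R$-sectoriality of $B_{i}$, because $\nabla_{h}$ is not a function of $B_{i}$; establishing it would require separate work not present in the paper. This is precisely why the paper proves only the zeroth-order bound (2.15) by the multiplier theorem and obtains the higher-order terms in (2.12) via localization and the known bounded-domain estimate. You should either adopt the paper's cut-off argument for the derivative terms, or explicitly supply the $R$-bounds for the gradient and Hessian symbols, which is a nontrivial additional step.
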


\vspace{15pt}
We first apply a multiplier theorem on a Hilbert space.
\vspace{15pt}

\begin{prop}
The assertion of Lemma 2.4 holds for $p=2$ and $\delta=0$.
\end{prop}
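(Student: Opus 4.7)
My plan is to combine the solution formula (2.11) with Plancherel's theorem in the axial variable $x_3$ and the pointwise-in-$\xi$ estimates of Propositions 2.1 and 2.2.

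First, I would verify the key sector inequality: for every $\lambda \in \Sigma_\theta$ with $\theta \in (\pi/2,\pi)$ and every $\xi \in \mathbb{R}$, the resolvent parameter $\mu := \lambda + \xi^{2}$ lies in $\Sigma_\theta$ and satisfies
\begin{align*}
|\lambda + \xi^{2}| \geq c_\theta (|\lambda| + \xi^{2})
\end{align*}
for some $c_\theta > 0$ depending only on $\theta$. The first assertion follows from the fact that $\Sigma_\theta$ contains the nonnegative real axis (since $\theta > \pi/2$), so translating $\lambda$ to the right by $\xi^{2} \geq 0$ cannot increase $|\arg \lambda|$. The quantitative bound comes from expanding $|\lambda + \xi^{2}|^{2} = |\lambda|^{2} + 2|\lambda|\xi^{2}\cos(\arg \lambda) + \xi^{4}$ and using $\cos(\arg \lambda) > \cos \theta$, which yields $c_\theta = \cos(\theta/2) > 0$.

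Granted this, Propositions 2.1 and 2.2 apply pointwise in $\xi$ to $\hat u_h(\cdot,\xi)$ and $\hat u^{z}(\cdot,\xi)$ in $L^{2}(D)$, giving bounds of the form $|\mu|^{1-k/2} \|\nabla_h^{k} \hat u\|_{L^{2}(D)} \leq C \|\hat f\|_{L^{2}(D)}$ for $k=0,1,2$. For $f \in C^{\infty}_c(\Pi)$, I would then apply Plancherel in $x_3$ to each term of (2.12). Using $\widehat{\partial_{x_3}^{k} u} = (i\xi)^{k} \hat u$, every term on the left of (2.12) reduces to an integral of $|m(\lambda,\xi)|^{2} \|\hat f\|_{L^{2}(D)}^{2}$, where $m(\lambda,\xi)$ is one of
\begin{align*}
\frac{|\lambda|}{|\mu|},\quad \frac{|\lambda|^{1/2}}{|\mu|^{1/2}},\quad \frac{|\lambda|^{1/2}|\xi|}{|\mu|},\quad \frac{|\xi|}{|\mu|^{1/2}},\quad \frac{\xi^{2}}{|\mu|}.
\end{align*}
Each of these is uniformly bounded in $(\lambda,\xi)$ by a constant depending only on $\theta$, by the sector inequality above combined with elementary AM-GM estimates (for instance $|\lambda|^{1/2}|\xi| \leq \tfrac{1}{2}(|\lambda|+\xi^{2})$). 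Integrating back yields (2.12) for $f \in C^{\infty}_c(\Pi)$.

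Finally, I would extend the estimate from $C^{\infty}_c(\Pi)$ to $L^{2}(\Pi)$ by density. The same argument also gives uniqueness: if $u \in W^{2,2}(\Pi)$ solves (2.1) with $f=0$, then $\hat u(\cdot,\xi)$ solves (2.2)-(2.3) with zero right-hand side and $\mu \in \Sigma_\theta$, so Propositions 2.1 and 2.2 force $\hat u \equiv 0$, hence $u \equiv 0$. The main technical point is really just the sector inequality $|\lambda + \xi^{2}| \gtrsim |\lambda| + \xi^{2}$; once that is in hand, the remainder is a routine Plancherel computation and has no dependence on a lower bound for $|\lambda|$, which is precisely why one can take $\delta = 0$.
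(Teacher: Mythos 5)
Your proof is correct, and it takes a somewhat more elementary route than the paper at the key step. Both arguments start from the solution formula (2.11), move the problem to the disk slice by the partial Fourier transform, and feed in the two-dimensional resolvent estimates of Propositions 2.1 and 2.2, exploiting that $\lambda+\xi^{2}$ stays in $\Sigma_\theta$ with $|\lambda+\xi^{2}|$ comparable to $|\lambda|+\xi^{2}$. The paper, however, packages this as an operator-valued Fourier multiplier argument: it sets $m_1(\xi)=\lambda(\lambda+\xi^{2}+B_1)^{-1}$, verifies uniform $B(L^{2}(D))$-bounds on $m_1(\xi)$ and on $\xi\,m_1'(\xi)$, and invokes the operator-valued Mikhlin theorem on $L^{q}(\mathbb{R};L^{2}(D))$, then specializes to $q=2$. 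You instead apply Plancherel in $x_3$ directly: since $\mathcal{F}$ is unitary on $L^{2}(\mathbb{R};L^{2}(D))=L^{2}(\Pi)$, the pointwise-in-$\xi$ bounds transfer immediately to $L^{2}(\Pi)$, with no need to differentiate the multiplier. This is cleaner and fully self-contained, but it is genuinely tied to $p=2$; the paper's heavier Mikhlin formulation is the one that Appendix B upgrades (uniform bound replaced by $R$-bound, Mikhlin replaced by the Weis theorem) to obtain the $L^{p}$ estimate (B.1), so the extra machinery is not wasted. Your sector inequality $|\lambda+\xi^{2}|\geq c_\theta(|\lambda|+\xi^{2})$ and the catalogue of scalar multipliers you bound are both correct, and the Fourier-side uniqueness argument is sound, though the paper's uniqueness proof by integration by parts is perhaps more standard.
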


\vspace{5pt}

\begin{proof}
We prove the estimate (2.12) for solutions given by the formula (2.11) for $f\in C^{\infty}_{c}(\Pi)$. We shall show the estimate 

\begin{align*}
|\lambda|||u_h||_{L^{2}(\Pi)}+|\lambda|^{\frac{1}{2}} ||\nabla u_h||_{L^{2}(\Pi)}+ ||\nabla^{2} u_h||_{L^{2}(\Pi)}\leq C||f_h||_{L^{2}(\Pi)}   \tag{2.13}
\end{align*}\\
for $\lambda\in \Sigma_{\theta}$. We are able to estimate $u_{z}$ by using (2.7) in the same way. We set

\begin{align*}
m_{1}(\xi)=\lambda (\lambda+\xi^{2}+B_{1})^{-1}.   
\end{align*}\\
By the resolvent estimate (2.4), the operator $m_{1}(\xi)$ acts as a bounded operator on $L^{2}(D)$. We set $\lambda=re^{i\eta}$ for $\eta\in (-\theta,\theta)$. We observe that $|\lambda|/|\lambda+\xi^{2}|\leq 1$ for $|\eta|\leq \pi/2$. For $\pi/2\leq |\eta|< \theta$, it follows that 

\begin{align*}
\frac{|\lambda|}{|\lambda+\xi^{2}|}=\frac{1}{|e^{i \eta}+\frac{\xi^{2}}{r}|}
\leq \frac{1}{|\sin\theta|}, \quad \theta\in (\pi/2,\pi). 
\end{align*}\\
Thus the operators $\{m_{1}(\xi)\}\subset B(L^{2}(D))$ are uniformly bounded for $\xi\in \mathbb{R}\backslash \{0\}$ and there exists a constant $C$, independent of $\lambda$ such that 

\begin{align*}
\sup\Big\{||m_1(\xi)|| \ |\ \xi\in \mathbb{R}\backslash \{0\}  \Big\}\leq C,\qquad \lambda\in \Sigma_{\theta}.   \tag{2.14}
\end{align*}\\
Here, $||\cdot||$ denotes the operator norm on $L^{2}(D)$. Since the resolvent is holomorphic for $\mu\in \Sigma_{\theta}$ and $d(\mu+B_1)^{-1}/d\mu=-(\mu+B_1)^{-2}$, the estimate (2.14) implies 

\begin{align*}
\sup\Big\{||\xi m'_1(\xi)|| \ |\ \xi\in \mathbb{R}\backslash \{0\}  \Big\}\leq C'.  
\end{align*}\\
Thus Mikhlin's operator-valued multiplier theorem on a Hilbert space \cite[6.1.6 Theorem]{BL}, \cite[I, 3.20. Corollary]{DeHP} implies that the operator ${\mathcal{F}}^{-1}m_{1}(\cdot){\mathcal{F}}$ acts as a bounded operator on $L^{q}(\mathbb{R}; L^{2}(D))$ for all $q\in (1,\infty)$. Since $L^{2}(\mathbb{R}; L^{2}(D))=L^{2}(\Pi)$, this in particular implies the resolvent estimate 

\begin{align*}
|\lambda|||u_{h}||_{L^{2}(\Pi) }\leq C||f_{h}||_{L^{2}(\Pi)}.
\end{align*}\\
By a similar way, we are able to estimate the higher order terms and obtain (2.13). We proved (2.12) for $p=2$ and $f\in C^{\infty}_{c}(\Pi)$.

For general $f\in L^{2}(\Pi)$, we construct solutions by taking a sequence $f_{m}\in C^{\infty}_{c}(\Pi)$ such that $f_m\to f$ in $L^{2}(\Pi)$ and using the estimate (2.12). The uniqueness follows from integration by parts.
\end{proof}

\vspace{15pt}

\begin{rem}
By using a multiplier theorem on a UMD-space, we are able to obtain the  $L^{p}$-estimate 

\begin{align*}
|\lambda|||u||_{L^{p}(\Pi)}\leq C||f||_{L^{p}(\Pi)}   \tag{2.15}
\end{align*}\\
for solutions to (2.1) and $\lambda\in \Sigma_{\theta}$. We give a proof for (2.15) in Appendix B.
\end{rem}

\vspace{15pt}
\subsection{$L^{p}$-estimates}
We next prove (2.12) for $p\in (1,\infty)$ and large $|\lambda|\geq \delta$ by a cut-off function argument. We apply $L^{p}$-estimates for the resolvent equation in a smoothly bounded domain $G\subset \mathbb{R}^{3}$:

\begin{equation*}
\begin{aligned}
\lambda u-\Delta u&=f\quad \textrm{in}\ G, \\
\nabla\times u\times n=g,\ u\cdot n&=0\quad \textrm{on}\ \partial G.
\end{aligned}
\tag{2.16}
\end{equation*}

\begin{prop}
For $\delta >0$, there exists a constant $C>0$ such that 

\begin{align*}
|\lambda| ||u||_{L^{p}(G)}+|\lambda|^{\frac{1}{2}} ||\nabla u||_{L^{p}(G)}
+||\nabla^{2} u||_{L^{p}(G)}\leq C(||f||_{L^{p}(G)}+|\lambda|^{\frac{1}{2}}||g||_{L^{p}(G)}+||\nabla g||_{L^{p}(G)})  \tag{2.17}
\end{align*}\\
holds for solutions of (2.16) for $|\lambda|\geq \delta$, $f\in L^{p}(G)$ and $g\in W^{1,p}(G)$ satisfying $g\cdot n=0$ on $\partial G$.
\end{prop}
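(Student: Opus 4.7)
The plan is to reduce (2.16) to the homogeneous boundary case $g = 0$. For $g = 0$ on a smoothly bounded domain $G$, the estimate
\begin{align*}
|\lambda|\,\|v\|_{L^p(G)} + |\lambda|^{1/2}\|\nabla v\|_{L^p(G)} + \|\nabla^2 v\|_{L^p(G)} \leq C\|f\|_{L^p(G)},\qquad \lambda\in\Sigma_\theta,\ |\lambda|\geq\delta,
\end{align*}
is the classical resolvent bound for the Laplacian with the Neumann-type boundary conditions of (1.2); it follows from the Agmon--Douglis--Nirenberg framework and is carried out in \cite{Miyakawa80}. To effect the reduction I construct a lift $\Phi\in W^{2,p}(G)$ with $\Phi\cdot n = 0$ and $\nabla\times\Phi\times n = g$ on $\partial G$ obeying
\begin{align*}
|\lambda|\,\|\Phi\|_{L^p(G)} + |\lambda|^{1/2}\|\nabla\Phi\|_{L^p(G)} + \|\nabla^2\Phi\|_{L^p(G)} \leq C\bigl(|\lambda|^{1/2}\|g\|_{L^p(G)} + \|\nabla g\|_{L^p(G)}\bigr).
\end{align*}
Then $v := u - \Phi$ solves the homogeneous problem with source $\tilde{f} := f - (\lambda\Phi - \Delta\Phi)$, whose $L^p$-norm is controlled by the right side of (2.17); applying the $g=0$ estimate to $v$ and combining with the bounds on $\Phi$ yields (2.17) for $u = v + \Phi$.

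To construct $\Phi$, I cover a tubular neighborhood of $\partial G$ by finitely many boundary-flattening charts with local coordinates $(y', y_n)$, the boundary corresponding to $\{y_n = 0\}$ and $-\partial_{y_n}$ to the outward normal $n$, and take a subordinate partition of unity $\{\chi_j\}$. A direct calculation in each chart, using $\Phi_n|_{y_n=0} = 0$ so that tangential derivatives of $\Phi_n$ also vanish there, shows that the boundary condition reduces, modulo curvature terms involving $\Phi$ itself, to $\partial_{y_n}\Phi^{\tau}|_{y_n = 0} = -g^{\tau}|_{y_n = 0}$, where $\Phi^\tau$ and $g^\tau$ denote the tangential parts. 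I then take the local ansatz
\begin{align*}
\Phi_{\mathrm{loc}}(y', y_n) = |\lambda|^{-1/2}\,\eta(y_n)\, e^{-|\lambda|^{1/2} y_n}\, g^{\tau}(y', y_n),
\end{align*}
with $\eta \in C_c^\infty([0,\infty))$ satisfying $\eta(0) = 1$ and $\eta'(0) = 0$. The exponential concentrates $\Phi_{\mathrm{loc}}$ in a boundary layer of thickness $|\lambda|^{-1/2}$; after the rescaling $\tilde y_n = |\lambda|^{1/2} y_n$, each normal derivative contributes a factor $|\lambda|^{1/2}$ while the unperturbed $L^p$-mass inherits a factor $|\lambda|^{-1/2}$ from the layer thickness, reproducing exactly the target scalings. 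Gluing the pieces, $\Phi := \sum_j \chi_j \Phi_{\mathrm{loc}, j}$ is a candidate global lift.

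The main obstacle is that the identity $\nabla\times\Phi\times n = -\partial_{y_n}\Phi^{\tau}$ on the boundary is only correct modulo lower-order terms from the curvature of $\partial G$, the variation of the coordinate frame, and tangential derivatives falling on the cutoffs $\chi_j$ and $\eta$, none of which involve a normal derivative of $\Phi$. The resulting boundary residual $r := g - \nabla\times\Phi\times n$ therefore carries one fewer factor of $|\lambda|^{1/2}$ than $g$ does in the natural $\lambda$-weighted trace norm. For $|\lambda| \geq M$ with some $M = M(\delta)$ sufficiently large, I iterate the construction with $r$ in place of $g$ and obtain a geometric series whose sum is a lift satisfying the boundary condition exactly, with the asserted bounds. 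In the complementary compact range $\delta \leq |\lambda| \leq M$, the estimate (2.17) reduces to the standard $W^{2,p}$ elliptic regularity bound for (2.16) on a smoothly bounded domain with inhomogeneous boundary data in $W^{1,p}(G)$, combined with a trace inequality and an $L^p$-type energy bound on $u$. Given $\Phi$ with the displayed bounds, $\|\lambda\Phi - \Delta\Phi\|_{L^p(G)} \leq C(|\lambda|^{1/2}\|g\|_{L^p(G)} + \|\nabla g\|_{L^p(G)})$ is immediate, closing the argument.
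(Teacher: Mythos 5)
Your strategy—lifting the inhomogeneous boundary data by an explicit boundary-layer ansatz and reducing to the $g=0$ resolvent estimate—is genuinely different from what the paper does. The paper simply cites the sharp resolvent estimate from \cite[Theorem 1.2]{AKST}, which bounds the left side of (2.17) by $\|f\|_{L^p(G)}+|\lambda|^{1/(2p')}\|g\|_{L^p(\partial G)}$, and then converts the boundary trace norm to interior norms via the multiplicative trace inequality $\|g\|_{L^p(\partial G)}\leq C\|g\|_{L^p(G)}^{1/p'}\|g\|_{W^{1,p}(G)}^{1/p}$ followed by Young's inequality, absorbing the leftover $\|g\|_{L^p(G)}$ term using $|\lambda|\geq\delta$. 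That is a three-line argument once the AKST estimate is available.

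Your construction has a concrete gap in the regularity budget. With the ansatz $\Phi_{\mathrm{loc}}(y',y_n)=|\lambda|^{-1/2}\eta(y_n)e^{-|\lambda|^{1/2}y_n}g^\tau(y',y_n)$, computing $\Delta\Phi_{\mathrm{loc}}$ (which you need both for the claimed bound on $\|\nabla^2\Phi\|_{L^p}$ and for the source $\tilde f=f-(\lambda\Phi-\Delta\Phi)$) produces, via Leibniz, the term $|\lambda|^{-1/2}\eta(y_n)e^{-|\lambda|^{1/2}y_n}\partial_{y_n}^2 g^\tau(y',y_n)$. Controlling this in $L^p$ requires $\nabla^2 g\in L^p(G)$, but only $g\in W^{1,p}(G)$ is assumed; the $|\lambda|^{-1/2}$ factor does not save you since $|\lambda|$ is not assumed to be large. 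Replacing $g^\tau(y',y_n)$ by the constant-in-$y_n$ extension of the trace $g^\tau(y',0)$ trades this for tangential second derivatives of the trace, which by the trace theorem also exceed the available regularity ($W^{1,p}(G)$ gives a trace only in $W^{1-1/p,p}(\partial G)$). To make the boundary-layer approach honest one needs a parameter-dependent lifting that gains a derivative, e.g.\ convolving the trace with a mollifier at scale $|\lambda|^{-1/2}$ or using a Poisson-type extension with symbol $(\,|\xi'|^2+\lambda\,)^{-1/2}e^{-(\,|\xi'|^2+\lambda\,)^{1/2}y_n}$, and then controlling all the resulting terms uniformly in $\lambda$—but this is essentially the content of the AKST estimate itself. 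Your iteration idea for large $|\lambda|$ addresses the lower-order boundary residual from curvature and cutoffs, but it does not repair this regularity mismatch, which is present already at the first step. A secondary, less serious point: for the structure $\lambda\Phi-\Delta\Phi$ to self-cancel at leading order one should use $e^{-\sqrt{\lambda}\,y_n}$ (principal branch, real part positive) rather than $e^{-|\lambda|^{1/2}y_n}$; with your choice the leading terms do not cancel for non-real $\lambda$, though they do remain of the admissible size $|\lambda|^{1/2}\|g\|_{L^p}$, so this alone is not fatal.
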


\vspace{5pt}

\begin{proof}
The stronger estimate 

\begin{align*}
|\lambda| ||u||_{L^{p}(G)}+|\lambda|^{\frac{1}{2}} ||\nabla u||_{L^{p}(G)}
+||\nabla^{2} u||_{L^{p}(G)}\leq C(||f||_{L^{p}(G)}+|\lambda|^{\frac{1}{2p'}}||g||_{L^{p}(\partial G)})  
\end{align*}\\
is proved in \cite[Theorem 1.2]{AKST}, where $p'$ denotes the conjugate exponent to $p$. Since the trace of $h$ is estimated by 

\begin{align*}
||g||_{L^{p}(\partial G)}\leq C||g||_{L^{p}(G)}^{\frac{1}{p'}}|| g||_{W^{1,p}(G)}^{\frac{1}{p}},
\end{align*}\\
by \cite[II.4., Theorem II.4.1]{Gal}, applying the Young's inequality implies (2.17).
\end{proof}

\vspace{15pt}

\begin{prop}
The a priori estimate (2.12) holds for solutions of (2.1) for $f\in C_{c}^{\infty}(\Pi)$. 
\end{prop}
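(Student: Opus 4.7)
The plan is to exploit translation invariance of $\Pi$ in the $x_{3}$-direction and localize the problem so that the estimate (2.17) for a fixed smoothly bounded domain applies. Fix a smoothly bounded domain $G$ with $D\times[-1,1]\subset G\subset D\times(-2,2)$ whose boundary contains $\partial D\times(-1,1)$; in other words, $G$ is a piece of the cylinder $\Pi$ with the two flat endcaps smoothly rounded off. Choose $\chi\in C_{c}^{\infty}((-1,1))$ with $0\leq \chi\leq 1$ and $\chi\equiv 1$ on $[-1/2,1/2]$, and set $\chi_{j}(x_{3}):=\chi(x_{3}-j)$, $G_{j}:=G+(0,0,j)$ for $j\in\mathbb{Z}$. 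For $f\in C_{c}^{\infty}(\Pi)$, Proposition 2.3 supplies a solution $u$ of (2.1), which by Proposition 2.5 and local elliptic regularity belongs to $W^{2,p}$ on each bounded subset of $\overline{\Pi}$, so the cut-off argument below is legitimate.

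Next I would verify that $v_{j}:=\chi_{j}u$ solves
\begin{align*}
\lambda v_{j}-\Delta v_{j}=\chi_{j}f-2(\nabla\chi_{j})\cdot\nabla u-(\Delta\chi_{j})u\qquad\text{in }G_{j},
\end{align*}
and satisfies $v_{j}\cdot n=0$ and $\nabla\times v_{j}\times n=0$ on all of $\partial G_{j}$. On the rounded endcap portion of $\partial G_{j}$, $\chi_{j}$ vanishes identically in a neighborhood, so both conditions are trivial. On the cylindrical part $\partial G_{j}\cap\partial\Pi$, the decisive observation is that $\nabla\chi_{j}=\chi_{j}'(x_{3})e_{z}$, so
\begin{align*}
\nabla\times v_{j}\times n=\chi_{j}(\nabla\times u)\times n+\chi_{j}'(e_{z}\times u)\times n;
\end{align*}
the first term vanishes by (2.1), and using $u\cdot n=u^{r}=0$ on $\partial\Pi$ one computes $(e_{z}\times u)\times n=-u^{\theta}(e_{r}\times e_{r})=0$. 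Apply Proposition 2.7 to $v_{j}$ on $G_{j}$ with $g=0$. Because every $G_{j}$ is a rigid translate of $G$, the constant in (2.17) is independent of $j$; after redistributing derivatives of $\chi_{j}$ between $v_{j}$ and $u$ and bounding the commutator terms using $\|\nabla\chi_{j}\|_{\infty},\|\Delta\chi_{j}\|_{\infty}$ (uniform in $j$), one obtains
\begin{align*}
&|\lambda|\|\chi_{j}u\|_{L^{p}(G_{j})}+|\lambda|^{1/2}\|\chi_{j}\nabla u\|_{L^{p}(G_{j})}+\|\chi_{j}\nabla^{2}u\|_{L^{p}(G_{j})}\\
&\qquad\leq C\|\chi_{j}f\|_{L^{p}(G_{j})}+C(1+|\lambda|^{1/2})\|u\|_{L^{p}(G_{j})}+C\|\nabla u\|_{L^{p}(G_{j})}.
\end{align*}

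Raising to the $p$-th power, summing over $j\in\mathbb{Z}$, and using the bounded overlap of $\{G_{j}\}$ together with $\sum_{j}\chi_{j}^{p}\geq 1$ on $\mathbb{R}$, I obtain
\begin{align*}
&|\lambda|^{p}\|u\|_{L^{p}(\Pi)}^{p}+|\lambda|^{p/2}\|\nabla u\|_{L^{p}(\Pi)}^{p}+\|\nabla^{2}u\|_{L^{p}(\Pi)}^{p}\\
&\qquad\leq C\|f\|_{L^{p}(\Pi)}^{p}+C(1+|\lambda|^{p/2})\|u\|_{L^{p}(\Pi)}^{p}+C\|\nabla u\|_{L^{p}(\Pi)}^{p}.
\end{align*}
Choosing $\delta$ large enough so that $C(1+|\lambda|^{p/2})\leq\tfrac{1}{2}|\lambda|^{p}$ and $C\leq\tfrac{1}{2}|\lambda|^{p/2}$ whenever $|\lambda|\geq\delta$, the two error terms are absorbed into the left-hand side, yielding (2.12).

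The main technical obstacle is showing that the cut-off $v_{j}=\chi_{j}u$ inherits both boundary conditions of (2.1) so that Proposition 2.7 applies with $g=0$. This rests on the algebraic identity $(e_{z}\times u)\times n=0$ on $\partial\Pi$, which in turn exploits the combination $u\cdot n=0$ with the curl-type tangential condition and the fact that $\chi_{j}$ depends only on $x_{3}$. Without this cancellation, a non-trivial boundary contribution $g$ of size $\|u\|_{W^{1,p}}$ would appear in (2.17) and the absorption step would fail.
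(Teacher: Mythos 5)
Your proof is correct and follows the same cut-off-and-absorption scheme as the paper's proof: localize in $x_3$, apply the bounded-domain resolvent estimate (2.17), sum in $j$ using the translation-invariance of the constant, and absorb the lower-order terms for $|\lambda|\geq\delta$ large. The one genuine difference is your observation that the boundary inhomogeneity vanishes identically: on $\partial\Pi$ one has $n=e_r$ and $u\cdot n=u^r=0$, so $e_z\times u=-u^{\theta}e_r$ and $(e_z\times u)\times n=-u^{\theta}(e_r\times e_r)=0$. The paper does not exploit this cancellation; it carries a generically nonzero $g_j=\nabla\varphi_j\times u\times n$ into (2.17) and absorbs the resulting contributions. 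Your closing remark, however, overstates the importance of the cancellation. Since $g_j$ involves one derivative of the cut-off and no derivatives of $u$, one has $\|g_j\|_{L^p}\lesssim\|u\|_{L^p}$ and $\|\nabla g_j\|_{L^p}\lesssim\|u\|_{W^{1,p}}$, so the extra terms $|\lambda|^{1/2}\|g_j\|_{L^p}+\|\nabla g_j\|_{L^p}$ in (2.17) are strictly lower order than the left-hand side and absorb for large $|\lambda|$ exactly as in the paper — the absorption step would not fail. Your cancellation is a clean simplification (one only needs the homogeneous-boundary case of Proposition 2.7), but it is not a logical prerequisite for the argument.
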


\vspace{5pt}

\begin{proof}
Let $\{\varphi_j\}_{j=-\infty}^{\infty}\subset C_{c}^{\infty}(\mathbb{R})$ be a partition of the unity such that $0\leq \varphi_j\leq 1$, $\textrm{spt}\ \varphi_j\subset [j-1,j+1]$, $\sum_{j=-\infty}^{\infty}\varphi_j(x_3)=1$, $x_3\in \mathbb{R}$. For a solution $u$ of (2.1), we see that $u_j=u\varphi_j$ satisfies 

\begin{align*}
\lambda u_j-\Delta u_j&=f_j\quad \textrm{in}\ G_j, \\
\nabla\times u_j\times n=g_j,\ u\cdot n&=0\quad \textrm{on}\ \partial G_j,
\end{align*}\\
for $G_j=D\times (j-1,j+1)$ and 

\begin{align*}
f_{j}&=f\varphi_j-2\partial_{x_3}u\partial_{x_3}\varphi_j-u\partial_{x_3}^{2}\varphi_j,\\
g_j&=\nabla \varphi_j\times u\times n.
\end{align*}\\
We take a smoothly bounded domain $\tilde{G}_{j}$ such that $G_j\subset \tilde{G}_{j}\subset D\times [j-2,j+2]$. Since the estimate (2.17) holds in $\tilde{G_{j}}$ for $|\lambda|\geq \delta$ with $\delta>0$ by Proposition 2.7, we have

\begin{align*}
|\lambda| ||u_{j}||_{L^{p}(\tilde{G_{j}})}+|\lambda|^{\frac{1}{2}} ||\nabla u_{j}||_{L^{p}(\tilde{G_{j}})}
+||\nabla^{2} u_{j}||_{L^{p}(\tilde{G_{j}})}\leq C(||f_{j}||_{L^{p}(\tilde{G_{j}})}+|\lambda|^{\frac{1}{2}}||g_{j}||_{L^{p}(\tilde{G_{j}})}+||\nabla g_{j}||_{L^{p}(\tilde{G_{j}})}  ) 
\end{align*}\\
with some constant $C$, independent of $j$. The above estimate yields

\begin{align*}
|\lambda| ||u\varphi_{j}||_{L^{p}(\Pi)}+|\lambda|^{\frac{1}{2}} ||\nabla u\varphi_{j}||_{L^{p}(\Pi)}
+||\nabla^{2} u\varphi_{j}||_{L^{p}(\Pi)}
\leq C'(||f||_{L^{p}(G_{j})}
+||u||_{W^{1,p}(G_{j})}+|\lambda|^{\frac{1}{2}}||u||_{L^{p}(G_{j})} ). 
\end{align*}\\
By summing over $j$, we obtain 

\begin{align*}
|\lambda| ||u||_{L^{p}(\Pi)}+|\lambda|^{\frac{1}{2}} ||\nabla u||_{L^{p}(\Pi)}
+||\nabla^{2} u||_{L^{p}(\Pi)}
\leq C''\big(||f||_{L^{p}(\Pi)}
+\delta^{-\frac{1}{2}}(|\lambda| ||u||_{L^{p}(\Pi)}+|\lambda|^{\frac{1}{2}}||\nabla u||_{L^{p}(\Pi)}) \big)
\end{align*}\\
for $|\lambda|\geq \delta$ and $\delta\geq 1$. We take $\delta\geq 1$ so that $C''\delta^{-1/2}\leq 1/2$ and obtain (2.12).
\end{proof}

\vspace{5pt}

\begin{proof}[Proof of Lemma 2.4]
We apply the a priori estimate (2.12) for solutions given by the formula (2.11) for $f\in C^{\infty}_{c}(\Pi)$. For general $f\in L^{p}(\Pi)$, we construct solutions by an approximation by elements of $C^{\infty}_{c}(\Pi)$ and the estimate (2.12). The uniqueness follows from a duality argument. The proof is now complete.
\end{proof}

\vspace{15pt}

\section{Fractional powers}

\vspace{15pt}
In this section, we see that a domain of a square root of the Laplace operator $B_0=B+\lambda_0$ for $\lambda_0>0$ is continuously embedded to the Sobolev space $W^{1,p}(\Pi)$. We first recall the notion of a bounded $H^{\infty}$-calculus for a sectorial operator in an abstract Banach space. In the subsequent section, we apply an abstract theory to the operator $B_0$ and deduce the continuous embeddings.

\vspace{15pt}

\subsection{BIP and ${H}^{\infty}$}

We recall a bounded $H^{\infty}$-calculus \cite{McIntosh86}. We follow a booklet \cite{DeHP}. We say that a closed linear operator $L$ in a Banach space $X$ is \textit{sectorial} if the domain $D(L)$ and the range $R(L)$ are dense in $X$, $(-\infty,0)\subset \rho(L)$ and there exists a constant $C>0$ such that 

\begin{align*}
||(t+L)^{-1}||\leq \frac{C}{t}\qquad t>0.    \tag{3.1}    
\end{align*}\\
Here, $\rho(L)$ is the resolvent set of $L$, i.e., the set of all $\lambda\in \mathbb{C}$ such that $(\lambda-L): D(L)\subset X\longrightarrow X$ is invertible and $(\lambda-L)^{-1}$ acts as a bounded operator on $X$. If $(-\infty,0)\subset \rho(L)$, $(-t-L)^{-1}=-(t+L)^{-1}$ is a bounded operator for $t>0$ and we are able to define the condition (3.1) with the operator norm $||\cdot ||$ on $X$. Since $(t+L)^{-1}$ is differentiable and all derivatives are estimated by (3.1), by the Taylor expansion, we are able to extend $(t+L)^{-1}$ as an analytic function to a sector $\Sigma_{\theta}=\{ t \in \mathbb{C}\backslash \{0\}\ |\ |\textrm{arg}\ {t}|<\theta  \}$ for some $\theta\in (0,\pi/2)$ and $t(t+L)^{-1}$ is bounded in $\Sigma_{\theta}$, i.e., 

\begin{align*}
\sup\left\{ t||(t+L)^{-1}||\ \middle|\  t\in \Sigma_{\theta}  \right\}<\infty.  \tag{3.2}
\end{align*}\\
The infimum of $\phi=\pi-\theta$ for $\theta\in (0,\pi)$ such that (3.2) holds is called \textit{spectral angle} of $L$, denoted by $\phi_{L}$. If the operator $L$ is sectorial, we are able to define the fractional power $L^{\alpha}$ for $\alpha\in [0,1]$. By the notation above, we avoid writing $(-\tilde{L}
)^{\alpha}$.

Let $H(\Sigma_{\phi})$ denote the space of all holomorphic functions in $\Sigma_{\phi}$. For simplicity, we abbreviate the notation by omitting the symbol $\Sigma_{\phi}$. Let $H^{\infty}$ denote the space of all bounded and holomorphic functions in $\Sigma_{\phi}$. The space $H^{\infty}$ is equipped with the norm $|f|_{\infty}^{\phi}=\sup_{\lambda\in \Sigma_{\phi}} |f(\lambda)|$. We set $H_{0}=\bigcup_{\alpha<0,\beta<0}H_{\alpha,\beta}$ by $H_{\alpha,\beta}$, the space of all functions $f\in H$ such that $|f|_{\alpha,\beta}^{\infty}=\sup_{|\lambda|\leq 1}|\lambda|^{\alpha}|f(\lambda)|+\sup_{|\lambda|\geq 1}|\lambda|^{-\beta}|f(\lambda)|$ is finite. The space $H_{0}$ is smaller than $H^{\infty}$ and consists of functions vanishing at $\lambda=0$ and $|\lambda|\to\infty$. 

We define bounded linear operators $f(L)$ for holomorphic functions $f\in H_0$. Here, we take $\phi \in (\phi_L,\pi)$ so that the spectrum $\sigma (L)=\mathbb{C}\backslash \rho(L)$ is included in $\Sigma_{\phi}$. We take a counterclockwise integral path $\Gamma=(\infty,0]e^{i\psi}\cup [0,\infty)e^{-i\psi}$ for $\psi \in (\phi_{L},\phi)$ in $\Sigma_{\phi}$ and set a bounded linear operator $f(L)$ by the Dunford integral

\begin{align*}
f(L)=\frac{1}{2\pi i}\int_{\Gamma}f(\lambda)(\lambda-L)^{-1}\dd \lambda, \quad f\in H_0(\Sigma_{\phi}).  \tag{3.3}
\end{align*}\\
We say that the operator $L$ admits \textit{a bounded $H^{\infty}$-calculus} if there exists $\phi\in (\phi_L,\pi)$ and $K>0$ such that 

\begin{align*}
||f(L)||\leq K|f|_{\infty}^{\phi}\quad \textrm{for}\ f\in H_{0}(\Sigma_{\phi}). \tag{3.4}
\end{align*}\\
The infimum of such $\phi$ is called \textit{$H^{\infty}$-angle}, denoted by $\phi^{\infty}_{L}$. If the operator $L$ admits a bounded $H^{\infty}$-calculus, we are able to define a bounded linear operator $f(L)$ for $f\in H^{\infty}$ by an approximation. In particular, we are able to define pure imaginary powers $L^{is}$ since $f(\lambda)=\lambda^{is}$ is bounded and holomorphic in $\Sigma_{\pi}$. Here, $\lambda^{is}$ takes the principal branch. We say that the operator $L$ admits \textit{a bounded imaginary power} if there exists a constant $C$ such that 

\begin{align*}
||L^{is}||\leq C\quad \textrm{for}\ |s|\leq 1.   \tag{3.5}
\end{align*}\\
Since $L^{is}$ forms a group, the estimate (3.5) implies that $L^{is}$ is quasi-bounded, i.e., $||L^{is}||\leq Ce^{\theta |s|}$ for $s\in \mathbb{R}$ and some constants $\theta, C>0$. The infimum of such $\theta$ is called \textit{power angle} of $L$, denoted by $\theta_{L}$. It follows from (3.4) that $0\leq \phi_{L}\leq \theta_{L}\leq \phi^{\infty}_{L}<\pi$. 

If a sectorial operator admits a bounded imaginary power, it follows that the domain of the fractional power $D(L^{\alpha})$ agrees with the complex interpolation space $[X,D(L)]_{\alpha}$ for $\alpha\in [0,1]$. Here, $D(L^{\alpha})$ is equipped with the graph-norm. See \cite{Tanabe79}, \cite{DeHP} for fractional powers of a sectorial operator.

\vspace{15pt}

\subsection{A domain of a square root}

We now define fractional powers for the operator $B_0=B+\lambda_0$ for $\lambda_0>0$. By Lemma 2.4, the operator $B_0$ is invertible and sectorial on $L^{p}$ with spectral angle zero. The boundedness of pure imaginary powers of the operator $B_0$ is proved by R. Seeley in \cite{Seeley71}. More strongly, we have:

\vspace{15pt}

\begin{prop}
There exists $\lambda_0>0$ such that $B_0$ admits a bounded $H^{\infty}$-calculus. 
\end{prop}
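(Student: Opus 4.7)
The plan is to represent $f(B_0)$ via the partial Fourier transform in $x_3$ as an operator-valued Fourier multiplier, mirroring the derivation of the solution formula (2.11), and then to invoke the operator-valued Mikhlin multiplier theorem on the UMD space $L^p(D)$, having in hand the bounded $H^\infty$-calculus of the two-dimensional Laplace operators $B_1$ and $B_2$.

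First I would record that on the smoothly bounded planar domain $D$ both $B_1$ and $B_2$ admit bounded $H^\infty$-calculi on $L^p(D)$ with angles strictly less than $\pi/2$; for $B_2$ (the Neumann Laplacian) this is classical \cite{Seeley71}, and for $B_1$ the same framework applies in view of the resolvent estimate (2.4) and Lopatinskii--Shapiro ellipticity. Writing $B_D$ for the two-dimensional operator acting diagonally as $B_1$ on horizontal components and $B_2$ on the vertical one, the shifted operator $B_D+\lambda_0$ is sectorial with angle less than $\pi/2$ and admits a bounded $H^\infty$-calculus; since $L^p(D)$ is UMD with property~$(\alpha)$, this upgrades to an R-bounded $H^\infty$-calculus by the Kalton--Weis theorem \cite{DeHP}. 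Mimicking the derivation of (2.11) then shows that for $\mu$ outside the spectrum of $B_0$,
\begin{equation*}
(\mu-B_0)^{-1}u=\mathcal{F}^{-1}\bigl[(\mu-\xi^2-B_D-\lambda_0)^{-1}(\mathcal{F}u)(\cdot,\xi)\bigr].
\end{equation*}

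Substituting this into the Dunford integral (3.3) for $f\in H_0(\Sigma_\phi)$ and exchanging the order of integration (justified by the decay of $f$ at $0$ and $\infty$) yields
\begin{equation*}
f(B_0)u=\mathcal{F}^{-1}\bigl[M_f(\xi)(\mathcal{F}u)(\cdot,\xi)\bigr],\qquad M_f(\xi):=f(\xi^2+B_D+\lambda_0).
\end{equation*}
The main step, and the main obstacle, is then to show that the families $\{M_f(\xi)\}_{\xi\in\mathbb{R}}$ and $\{\xi M_f'(\xi)\}_{\xi\in\mathbb{R}}$ are R-bounded on $L^p(D)$ with R-bound controlled by $|f|_\infty^\phi$. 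Fixing $\phi$ larger than the $H^\infty$-angle of $B_D+\lambda_0$, the translation $z\mapsto z+\xi^2$ maps $\Sigma_\phi$ into itself and satisfies $\sup_{z\in\Sigma_\phi}|f(z+\xi^2)|\leq |f|_\infty^\phi$ for every $\xi\in\mathbb{R}$, so the R-boundedness of $\{M_f(\xi)\}$ reduces to the R-bounded $H^\infty$-calculus of $B_D+\lambda_0$ applied to the shifted symbol $f(\cdot+\xi^2)$. The derivative $\xi M_f'(\xi)=2\xi^2 f'(\xi^2+B_D+\lambda_0)$ is handled in the same way after noting that $|zf'(z+\xi^2)|\leq C|f|_\infty^{\phi'}$ on a slightly narrower sector $\Sigma_{\phi'}$ by the Cauchy integral formula.

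Finally, since $L^p(D)$ is UMD for $p\in(1,\infty)$, the operator-valued Mikhlin multiplier theorem applies to $M_f$ on $L^p(\mathbb{R};L^p(D))=L^p(\Pi)$ and delivers $\|f(B_0)u\|_{L^p(\Pi)}\leq K|f|_\infty^\phi\|u\|_{L^p(\Pi)}$, establishing (3.4) for $f\in H_0(\Sigma_\phi)$ and hence the bounded $H^\infty$-calculus of $B_0$.
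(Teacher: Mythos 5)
Your argument is correct in outline, but it takes a genuinely different route from the paper. The paper disposes of Proposition 3.1 in one line by citing \cite[Theorem 3.1]{GHT}, a black-box result giving bounded $H^\infty$-calculus for the Neumann Laplacian on general uniformly regular domains; the infinite cylinder falls under that hypothesis and nothing more needs to be said. You instead exploit the special product structure of $\Pi=D\times\mathbb{R}$: starting from the solution formula (2.11), you push the Dunford integral through the partial Fourier transform to obtain the operator-valued symbol $M_f(\xi)=f(\xi^2+B_D+\lambda_0)$, control its $R$-bounds through the (R-bounded) $H^\infty$-calculus of the two-dimensional cross-sectional operator, and close with Weis's operator-valued Mikhlin theorem. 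This is precisely the strategy the paper itself uses for the easier resolvent estimate in Proposition 2.5 and Appendix~B, so your proposal is in the spirit of the paper even though the paper does not carry it out for the $H^\infty$-calculus itself. What your approach buys is an explicit construction tied to the geometry and a clear reduction of the 3D calculus to the 2D one; what it costs is heavier input from the $R$-boundedness machinery.

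Two points need tightening. First, the attribution to \cite{Seeley71} is not quite what you need: Seeley's theorem gives bounded imaginary powers, which on a UMD space yields $R$-sectoriality via \cite{CP01}, but BIP does not in general imply a bounded $H^\infty$-calculus, and your scheme genuinely requires the stronger $R$-bounded $H^\infty$-calculus of $B_1+\lambda_0$ and $B_2+\lambda_0$ on $L^p(D)$ (from which you then invoke Kalton--Weis and property $(\alpha)$ for the $R$-upgrade). The correct supporting citation is a bounded-$H^\infty$ result for elliptic boundary value problems on smooth bounded domains --- for instance the very same \cite{GHT} the paper cites, or the Denk--Hieber--Pr\"uss framework in \cite{DeHP} --- rather than Seeley. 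Second, one should note that $B_2$ alone is not sectorial on $L^p(D)$ (it has constants in its kernel, as the paper records in Appendix~B); this is harmless in your setting because you only ever work with $B_2+\lambda_0$, $\lambda_0>0$, which is invertible and sectorial, but it deserves a sentence so the reader does not object. With these two citations repaired, the argument is sound and complete.
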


\vspace{5pt}

\begin{proof}
The assertion is proved in \cite[Theorem 3.1]{GHT} for general uniformly regular domains.
\end{proof}

\vspace{15pt}
Proposition 3.1 implies that $D(B_0^{\alpha})=[L^{p}, D(B_0)]_{\alpha}$ for $\alpha \in [0,1]$. Here, we define the fractional power $B_0^{\alpha}$ as an inverse of the bounded operator $B_0^{-\alpha}$, i.e.,  

\begin{align*}
&B_0^{\alpha}u=(B_0^{-\alpha})^{-1}u,\quad u\in D(B_0^{\alpha}),\\
&D(B_0^{\alpha})=R(B_0^{-\alpha}), \quad \alpha\in (0,1],
\end{align*}\\
and $B_0^{0}=I$. The fractional power $B_0^{-\alpha}$ is defined by the Dunford integral (3.3) by taking $f(\lambda)=\lambda^{-\alpha}$ and the  counter-clockwise integral path $\Gamma$, consisting of the two half lines $\{\lambda \in \mathbb{C}\ |\ \textrm{arg}\ (\lambda-a)=\pm \psi\}$ for some $a>0$ and $\psi\in (0,\pi/2)$.  

\vspace{15pt}

We deduce a continuous embedding of the domain of the square root $B_0^{1/2}$, which is used later in Section 6.

\vspace{15pt}

\begin{lem}
The domain $D(B_0^{1/2})$ is continuously embedded to $W^{1,p}(\Pi)$.
\end{lem}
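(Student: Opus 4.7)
The plan is to obtain the embedding from the identification of $D(B_0^{1/2})$ with a complex interpolation space, granted by the bounded $H^\infty$-calculus (equivalently, the bounded imaginary power) already recorded in Proposition 3.1, and then to compare this interpolation space with $W^{1,p}(\Pi)$. Concretely, I will argue along the chain
\begin{align*}
D(B_0^{1/2})=[L^{p}(\Pi),D(B_0)]_{1/2}\hookrightarrow [L^{p}(\Pi),W^{2,p}(\Pi)]_{1/2}\hookrightarrow W^{1,p}(\Pi),
\end{align*}
where the first equality was noted just after Proposition 3.1, the middle embedding uses the fact that $D(B_0)$ continuously embeds into $W^{2,p}(\Pi)$, and the last is the standard Calder\'on identification of a complex interpolation space with a Bessel potential space on a sufficiently regular domain.

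First I would verify $D(B_0)\hookrightarrow W^{2,p}(\Pi)$. By the definition (1.2) of $B$, the domain equals $D(B_0)=D(B)\subset W^{2,p}(\Pi)$ as a set, so only continuity of the inclusion with respect to the graph norm of $B_0$ must be checked. This is a direct consequence of Lemma 2.4 applied at $\lambda=\lambda_0$: choosing $\lambda_0>0$ large enough (which we may, since Proposition 3.1 is stated for some $\lambda_0>0$ and the embedding is monotone in $\lambda_0$) produces
\begin{align*}
\|u\|_{W^{2,p}(\Pi)}\leq C\big(\lambda_0\|u\|_{L^{p}(\Pi)}+\|\nabla^{2}u\|_{L^{p}(\Pi)}\big)\leq C'\|B_0 u\|_{L^{p}(\Pi)}
\end{align*}
for $u\in D(B_0)$. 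Since $D(B_0)$ is closed in the graph norm and $B_0$ is invertible, the graph norm is equivalent to $\|B_0\,\cdot\|_{L^p}$, yielding the continuous inclusion $D(B_0)\hookrightarrow W^{2,p}(\Pi)$.

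Next I would invoke the interpolation identity. Since Proposition 3.1 furnishes a bounded $H^{\infty}$-calculus (in particular a bounded imaginary power) for $B_0$ on $L^{p}(\Pi)$, the general theory of fractional powers recalled in Section 3.1 gives $D(B_0^{1/2})=[L^{p}(\Pi),D(B_0)]_{1/2}$ with equivalent norms. The previous step and functoriality of complex interpolation then yield
\begin{align*}
D(B_0^{1/2})\hookrightarrow [L^{p}(\Pi),W^{2,p}(\Pi)]_{1/2}.
\end{align*}
It remains to identify this complex interpolation space with $W^{1,p}(\Pi)$. Because $\partial\Pi$ is smooth, there is a bounded extension operator from $W^{k,p}(\Pi)$ to $W^{k,p}(\mathbb{R}^{3})$ for $k=0,1,2$, which reduces the identification to the Euclidean case $[L^{p}(\mathbb{R}^{3}),W^{2,p}(\mathbb{R}^{3})]_{1/2}=W^{1,p}(\mathbb{R}^{3})$, a classical theorem of Calder\'on on Bessel potential spaces.

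The only real subtlety is that last identification on the unbounded domain $\Pi$. One has to be sure the relevant extension operator and Bessel-potential description are available uniformly on $\Pi$; since $\Pi$ has a uniformly smooth boundary this is known, and a direct reference (e.g. Triebel's monograph on interpolation theory, or the treatment in Bergh--L\"ofstr\"om) can be cited. Once this point is settled, the embedding $D(B_0^{1/2})\hookrightarrow W^{1,p}(\Pi)$ follows.
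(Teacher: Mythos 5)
Your proof is correct and follows essentially the same route as the paper: identify $D(B_0^{1/2})$ with $[L^{p}(\Pi),D(B_0)]_{1/2}$ via the bounded $H^{\infty}$-calculus, embed into $[L^{p}(\Pi),W^{2,p}(\Pi)]_{1/2}$, and identify the latter with $W^{1,p}(\Pi)$ through an extension-operator/Calder\'on argument (the paper cites Muramatsu for the extension step). You supply a bit more detail on the continuity of $D(B_0)\hookrightarrow W^{2,p}(\Pi)$, which the paper leaves implicit, but the argument is the same.
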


\vspace{15pt}

\begin{proof}
Since there exists a linear extension operator from the complex interpolation space $[L^{p}(\Pi), W^{2,p}(\Pi)]_{1/2}$ to the Sobolev space $W^{1,p}(\mathbb{R}^{3})$ \cite[Theorem 6]{Muramatsu74}, the interpolation space agrees with the Sobolev space $W^{1,p}(\Pi)$ (see also \cite[7.57]{Ad}). Thus we have

\begin{align*}
D(B_0^{1/2})
=[L^{p}(\Pi), D(B_0)]_{1/2}
\subset [L^{p}(\Pi), W^{2,p}(\Pi)]_{1/2}=W^{1,p}(\Pi),
\end{align*}\\
with continuous injection.
\end{proof}

\vspace{15pt}

\section{The Stokes operator}

\vspace{15pt}
We define the Stokes operator as a restriction of the Laplace operator in a solenoidal vector space. Since the Helmholtz projection operator is commutable with the Laplace operator subject to the Neumann boundary condition, a restriction of the semigroup $e^{-tB}$ forms a bounded $C_0$-analytic semigroup on $L^{p}_{\sigma}$.

\vspace{15pt}

\begin{lem}
\begin{align*}
&\mathbb{P}B=B\mathbb{P}\quad \textrm{on}\ D(B),   \tag{4.1}\\
&\mathbb{P}(\lambda+B)^{-1}=(\lambda+B)^{-1}\mathbb{P}\quad \textrm{on}\ L^{p}.   \tag{4.2}
\end{align*}
\end{lem}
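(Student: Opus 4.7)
The plan is to prove the commutation relation (4.1) on $D(B)$ by a direct Helmholtz decomposition argument, and then derive (4.2) by applying $\mathbb{P}$ to the resolvent equation.

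For (4.1), take $u\in D(B)$ and decompose $u=\mathbb{P}u+\nabla q$ via the Helmholtz decomposition on $\Pi$ from \cite{ST98}; here $q$ solves the Neumann problem $\Delta q=\D\, u$ in $\Pi$ with $\partial_n q=u\cdot n=0$ on $\partial\Pi$. Since $\D\, u\in W^{1,p}(\Pi)$, elliptic regularity for the Neumann Laplacian---localized in $x_3$ by the partition-of-unity argument already used in the proof of Proposition 2.8---gives $\nabla q\in W^{2,p}(\Pi)$. Hence $v:=\mathbb{P}u\in W^{2,p}(\Pi)$, and the identities $\nabla\times\nabla q\equiv 0$ and $\partial_n q=0$ combined with the boundary conditions on $u$ immediately yield $v\cdot n=0$ and $\nabla\times v\times n=0$ on $\partial\Pi$. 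Thus $\mathbb{P}u\in D(B)$.

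Writing $\Delta u=\Delta v+\nabla\Delta q$ and using that $\mathbb{P}$ annihilates gradients reduces (4.1) to showing $\Delta v\in L^{p}_{\sigma}$. The divergence condition $\D\,\Delta v=\Delta\,\D\, v=0$ is automatic, so the only content is the vanishing of the normal trace. Rewriting $-\Delta v=\nabla\times\omega-\nabla\,\D\, v=\nabla\times\omega$ with $\omega:=\nabla\times v$, the vorticity boundary condition $\omega\times n=0$---which in cylindrical coordinates says $\omega^{\theta}=\omega^{z}=0$ at $r=1$---gives
\begin{align*}
(\nabla\times\omega)\cdot n\big|_{r=1}=\tfrac{1}{r}\partial_{\theta}\omega^{z}-\partial_{z}\omega^{\theta}\big|_{r=1}=0,
\end{align*}
so that $\Delta v\cdot n=0$ on $\partial\Pi$ and therefore $\Delta v\in L^{p}_{\sigma}$.

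Once (4.1) is established, (4.2) is immediate: for $f\in L^{p}$ and $u=(\lambda+B)^{-1}f\in D(B)$, applying $\mathbb{P}$ to $\lambda u+Bu=f$ and invoking (4.1) gives $(\lambda+B)\mathbb{P}u=\mathbb{P}f$, which rearranges to (4.2). I expect the main obstacle to be the verification $\Delta v\in L^{p}_{\sigma}$: it is precisely the vanishing tangential vorticity condition $\nabla\times v\times n=0$ that makes the Laplacian preserve the Helmholtz splitting, and this compatibility is the structural feature of (1.1) that makes the Stokes operator well-behaved on $L^{p}_{\sigma}$.
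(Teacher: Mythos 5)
Your argument is correct in spirit and rests on the same structural fact as the paper's: the Laplacian of a solenoidal field whose tangential vorticity vanishes on the boundary has zero normal trace, so $\mathbb{P}$ and $-\Delta$ commute through the Helmholtz splitting. The organization, however, differs in two respects. First, where you write $\Delta u=\Delta v+\nabla\Delta q$ and kill the gradient term directly by $\mathbb{P}(\nabla\Delta q)=0$, the paper instead factors the claim into $\mathbb{Q}B\mathbb{P}u=0$ and $\mathbb{P}B\mathbb{Q}u=0$ and obtains the latter by a duality argument; your route is more direct but requires knowing a priori that $\nabla q=\mathbb{Q}u\in W^{2,p}(\Pi)$, which you propose to obtain by re-running the Proposition 2.8 localization for the scalar Neumann problem, whereas the paper simply invokes the $W^{2,p}$-boundedness of $\mathbb{P}$ from \cite{ST98}. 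Second, and more substantively, your verification of $\Delta v\cdot n=0$ is a pointwise cylindrical-coordinate computation of $(\nabla\times\omega)\cdot n\big|_{r=1}=\tfrac{1}{r}\partial_\theta\omega^z-\partial_z\omega^\theta$. This identity is exactly the coordinate form of the surface-divergence relation $-\Delta v\cdot n=\D_{\partial\Pi}(\nabla\times v\times n)$ that the paper records, but at the regularity actually available ($v\in W^{2,p}$, hence $\omega\in W^{1,p}$) the traces $\partial_\theta\omega^z\big|_{r=1}$ and $\partial_z\omega^\theta\big|_{r=1}$ do not make classical sense; the normal component of $\Delta v$ exists only as a functional in $W^{-1/p,p}(\partial\Pi)$. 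The paper's integration-by-parts computation $\int_\Pi\Delta f\cdot\nabla\varphi\,\dd x=\int_{\partial\Pi}(\nabla\times f\times n)\cdot\nabla\varphi\,\dd\mathcal{H}=0$ is precisely the rigorous rendering of your formal boundary calculation, tested against $\varphi\in C^\infty_c(\overline{\Pi})$ and then extended by density of gradients in $G^{p'}(\Pi)$. So your write-up conveys the right mechanism but should be recast in the weak form to be airtight; as it stands, the step from the pointwise boundary identity to $\Delta v\in L^{p}_{\sigma}$ is a genuine regularity gap. The derivation of (4.2) from (4.1) is the same as the paper's.
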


\vspace{5pt}

\begin{proof}
We prove (4.1). The equality (4.2) follows from (4.1). We take $u\in D(B)$. Since the operator $\mathbb{P}$ acts as a bounded operator on $W^{2,p}$ \cite[Theorem 6]{ST98} (see Lemma 6.2 in Section 6), the function $\mathbb{P}u$ belongs to $W^{2,p}$. By taking the rotation to

\begin{align*}
u=\mathbb{P}u+\mathbb{Q}u=f+\nabla \phi,
\end{align*}\\
we see that $\nabla\times  f\times n=0$ on $\partial\Pi$, where $\mathbb{Q}=I-\mathbb{P}$. Thus, $\mathbb{P}u\in D(B)\cap L^{p}_{\sigma}$. We shall show that

\begin{align*}
\mathbb{Q} B\mathbb{P}u&=0,  \tag{4.3} \\
\mathbb{P}B\mathbb{Q}u&=0.  \tag{4.4}
\end{align*}\\
The property (4.1) follows from (4.3) and (4.4) since 

\begin{align*}
\mathbb{P}B u
&=\mathbb{P}B(\mathbb{P}u+\mathbb{Q}u)\\
&=(I-\mathbb{Q})B\mathbb{P}u+\mathbb{P}B\mathbb{Q}u\\
&=B\mathbb{P}u.
\end{align*}\\
We prove (4.3). The property (4.4) follows from a duality. We set

\begin{align*}
&\nabla \Phi=\mathbb{Q}B\mathbb{P}u,\\
&B\mathbb{P}u=-\Delta f,\quad f=\mathbb{P}u\in D(B)\cap L^{p}_{\sigma}.
\end{align*}\\
It is not difficult to see that $\nabla \Phi\equiv 0$ since $\Phi$ satisfies the Neumann problem

\begin{align*}
&\Delta \Phi=-\D\ (\Delta f)=0\quad \textrm{in}\ \Pi,\\
&\frac{\partial \Phi}{\partial n}=-\Delta f\cdot n=\D_{\partial\Pi}\ (\nabla\times f\times n)=0\quad \textrm{on}\ \partial \Pi,
\end{align*}\\
in a weak sense. Here, $\D_{\partial\Pi}$ denotes the surface divergence on $\partial\Pi$. Indeed, integration by parts yields 

\begin{align*}
\int_{\Pi}\Delta f\cdot \nabla \varphi\dd x
=\sum_{i,j=1}^{3}\int_{\partial\Pi}(\partial_{j}f^{i}-\partial_{i}f^{j} )n^{j}\partial_{i}\varphi \dd {\mathcal{H}}
=\int_{\partial\Pi}(\nabla\times f\times n)\cdot \nabla \varphi \dd {\mathcal{H}}=0,
\end{align*}\\
for $\varphi\in C^{\infty}_{c}(\overline{\Pi})$. Since $\nabla \varphi$ is orthogonal to solenoidal vector fields, it follows that

\begin{align*}
\int_{\Pi}\nabla \Phi\cdot \nabla \varphi\dd x
=-\int_{\Pi}\Delta f\cdot \nabla \varphi\dd x
=0.
\end{align*}\\
The above equality is extendable for all $\nabla \varphi\in G^{p'}(\Pi)$ since gradients of functions in $C^{\infty}_{c}(\overline{\Pi})$ are dense in $G^{p'}(\Pi)=\{\nabla \varphi\in L^{p'}(\Pi)\ |\ \varphi\in L^{p'}_{\textrm{loc}}(\Pi) \}$ \cite[Lemma 7]{ST98}, where $p'$ is the conjugate exponent to $p$. It follows that 

\begin{align*}
(\nabla \Phi,g)=(\nabla \Phi,\mathbb{P}g+\mathbb{Q}g)=(\nabla \Phi,\mathbb{Q}g)
=0\quad \textrm{for}\ g\in C^{\infty}_{c}.
\end{align*}\\
Here, $(f,g)$ denotes the integral of $f\cdot g$ in $\Pi$ for $f\in L^{p}$ and $g\in L^{p'}$. We proved $\nabla \Phi\equiv 0$. 
\end{proof}

\vspace{15pt}

We consider the Stokes operator

\begin{equation*}
\begin{aligned}
&A u= B u\quad u\in D(A),\\
&D(A)=D(B)\cap L^{p}_{\sigma},
\end{aligned}
\tag{4.5}
\end{equation*}\\
and the Neumann problem

\begin{equation*}
\begin{aligned}
\lambda u-\Delta u=f,\ \D\ u=0\quad \textrm{in}\ \Pi,\\
\nabla\times u\times n=0,\quad u\cdot n=0\quad \textrm{on}\ \partial\Pi.
\end{aligned}
\tag{4.6}
\end{equation*}\\
The problem (2.1) is equivalent to (4.6) for solenoidal vector fields $f\in L^{p}_{\sigma}$.

\vspace{15pt}

\begin{lem}

\begin{align*}
(\lambda+A)^{-1}=(\lambda+B)^{-1}\quad \textrm{on}\ L^{p}_{\sigma},   \tag{4.7}
\end{align*}\\
for $\lambda\in \rho(-B)$. In particular, the Stokes operator $-A$ generates a bounded $C_0$-analytic semigroup on $L^{p}_{\sigma}$.
\end{lem}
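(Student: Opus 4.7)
The plan is to transfer the resolvent identity from $B$ on $L^{p}$ to $A$ on $L^{p}_{\sigma}$ by invoking the commutation relation (4.2) of Lemma 4.1, and then to deduce the semigroup claim from the sectorial resolvent bound in Lemma 2.4.

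To establish (4.7), I would fix $\lambda\in\rho(-B)$ and $f\in L^{p}_{\sigma}$, so that $\p f=f$. Setting $u=(\lambda+B)^{-1}f$, the commutation (4.2) gives
\begin{align*}
\p u=\p(\lambda+B)^{-1}f=(\lambda+B)^{-1}\p f=(\lambda+B)^{-1}f=u,
\end{align*}
so $u\in L^{p}_{\sigma}$. Since $u\in D(B)$ by construction, it lies in $D(B)\cap L^{p}_{\sigma}=D(A)$, and $(\lambda+A)u=(\lambda+B)u=f$. Conversely, if $u\in D(A)\subset D(B)$ satisfies $(\lambda+A)u=0$, then $(\lambda+B)u=0$, whence $u=0$ because $\lambda\in\rho(-B)$. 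Therefore $\lambda\in\rho(-A)$ and $(\lambda+A)^{-1}=(\lambda+B)^{-1}$ on $L^{p}_{\sigma}$, which is (4.7).

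For the semigroup assertion, I would note that Lemma 2.4 supplies $\theta\in(\pi/2,\pi)$ together with constants $\delta,C>0$ such that $|\lambda|\,\|(\lambda+B)^{-1}\|_{L^{p}\to L^{p}}\leq C$ for every $\lambda\in\Sigma_{\theta}$ with $|\lambda|\geq\delta$. By (4.7), the identical sectorial resolvent bound is inherited by $(\lambda+A)^{-1}$ viewed as an operator on $L^{p}_{\sigma}$. Density of $D(A)$ in $L^{p}_{\sigma}$ is automatic, since $C^{\infty}_{c,\sigma}\subset D(A)$ and $C^{\infty}_{c,\sigma}$ is dense in $L^{p}_{\sigma}$ by definition. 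The standard resolvent criterion for analytic semigroups then yields that $-A$ generates a bounded analytic $C_{0}$-semigroup on $L^{p}_{\sigma}$, realized concretely as the restriction of $e^{-tB}$ to $L^{p}_{\sigma}$.

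The only substantive content lies in showing that $(\lambda+B)^{-1}$ preserves $L^{p}_{\sigma}$; this is entirely driven by Lemma 4.1, so I do not anticipate an obstacle beyond the material already developed. The semigroup conclusion is then a routine appeal to resolvent criteria, with the sector-of-analyticity and the uniform bound both transferred intact from $B$ via the identity (4.7).
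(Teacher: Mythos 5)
Your derivation of (4.7) matches the paper's proof essentially verbatim: fix $f\in L^{p}_{\sigma}$, set $u=(\lambda+B)^{-1}f$, deduce $\mathbb{P}u=u$ from the commutation (4.2), hence $u\in D(B)\cap L^{p}_{\sigma}=D(A)$ with $(\lambda+A)u=f$; the injectivity of $\lambda+A$ on $D(A)\subset D(B)$ is immediate, and boundedness of $(\lambda+A)^{-1}$ is inherited from $(\lambda+B)^{-1}$. The paper phrases the last step as \emph{"Since $u$ is a unique solution of (4.6)"}, which is the same content.

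One subtle point in the semigroup part: Lemma 2.4 only gives the sectorial resolvent bound $|\lambda|\,\|(\lambda+B)^{-1}\|\le C$ on $\Sigma_{\theta}$ restricted to $|\lambda|\ge\delta$. By itself this yields a $C_{0}$-analytic semigroup (and a \emph{bounded} one for the shifted operator $A_{0}=A+\lambda_{0}$), but not uniform boundedness of $e^{-tA}$ as $t\to\infty$. To justify the word \emph{bounded} in the lemma you also need the estimate near $\lambda=0$, i.e.\ $|\lambda|\,\|(\lambda+B)^{-1}\|\le C$ for all $\lambda\in\Sigma_{\theta}$, which is precisely (2.15) from Remark 2.6, proved as Lemma B.1 in Appendix B. The paper's proof glosses over this as well, but if you want your argument to deliver what the lemma literally asserts, you should cite (2.15) alongside Lemma 2.4. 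Your observation that $C^{\infty}_{c,\sigma}\subset D(A)$ gives density of $D(A)$ in $L^{p}_{\sigma}$ is correct (compact support in the open cylinder makes the boundary conditions vacuous) and is a useful detail the paper leaves implicit.
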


\vspace{5pt}

\begin{proof}
We set  $u=(\lambda+B)^{-1}f$ for $f\in L^{p}_{\sigma}$. It follows from (4.2) that 

\begin{align*}
\mathbb{P}u=\mathbb{P}(\lambda +B)^{-1}f=(\lambda +B)^{-1}f=u.
\end{align*}\\
Hence $u\in L^{p}_{\sigma}\cap D(B)=D(A)$. Since $u$ is a unique solution of (4.6), $u=(\lambda+A)^{-1}f$.
\end{proof}

\vspace{15pt}
We set the operator 

\begin{align*}
A_0=A+\lambda_0\quad \lambda_0>0,
\end{align*}\\
and define fractional powers of the operator by the same way as we did for $B_0$ in the previous section. Since the resolvent of $A_0$ agrees with that of $B_0$ on $L^{p}_{\sigma}$ by (4.7), we have
\vspace{15pt}

\begin{prop}
\begin{align*}
&A_0^{-\alpha}=B_{0}^{-\alpha}\quad \textrm{on}\ L^{p}_{\sigma},  \tag{4.8}\\
&R(A_{0}^{-\alpha})=R(B_{0}^{-\alpha})\cap L^{p}_{\sigma}.   \tag{4.9}
\end{align*}
\end{prop}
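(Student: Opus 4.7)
The plan is to reduce both identities to the equality of resolvents on $L^{p}_{\sigma}$ recorded in Lemma 4.2, and then feed this into the Dunford integral definition of the fractional powers, together with the commutation of $\mathbb{P}$ with the resolvent from (4.2).

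For (4.8), first I would note that since $A_0=A+\lambda_0$ and $B_0=B+\lambda_0$ differ by a scalar shift, (4.7) is equivalent to
\begin{align*}
(\lambda-A_0)^{-1}u=(\lambda-B_0)^{-1}u\qquad \text{for all } u\in L^{p}_{\sigma},\ \lambda\in \rho(B_0).
\end{align*}
The contour $\Gamma$ used in Section 3.2 to define $B_0^{-\alpha}$ lies in $\rho(B_0)$ by construction, and hence, by the identity above, it lies in $\rho(A_0)$ as well, so the same $\Gamma$ may be used to define $A_0^{-\alpha}$. Applying both integrals to $u\in L^{p}_{\sigma}$, the integrands coincide pointwise on $\Gamma$, giving
\begin{align*}
A_0^{-\alpha}u=\frac{1}{2\pi i}\int_{\Gamma}\lambda^{-\alpha}(\lambda-A_0)^{-1}u\,\dd\lambda=\frac{1}{2\pi i}\int_{\Gamma}\lambda^{-\alpha}(\lambda-B_0)^{-1}u\,\dd\lambda=B_0^{-\alpha}u,
\end{align*}
which is (4.8).

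For (4.9), the inclusion $R(A_0^{-\alpha})\subset R(B_0^{-\alpha})\cap L^{p}_{\sigma}$ is immediate from (4.8) together with the fact that $A_0^{-\alpha}$ acts on $L^{p}_{\sigma}$. For the reverse inclusion, I would first upgrade the commutation (4.2) to the fractional power: inserting $\mathbb{P}$ under the Dunford integral for $B_0^{-\alpha}$ and using (4.2) on each factor $(\lambda-B_0)^{-1}$ yields $\mathbb{P}B_0^{-\alpha}=B_0^{-\alpha}\mathbb{P}$ on $L^{p}$. Given $u\in R(B_0^{-\alpha})\cap L^{p}_{\sigma}$, write $u=B_0^{-\alpha}w$ with $w\in L^{p}$ and set $v:=\mathbb{P}w\in L^{p}_{\sigma}$. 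Then, combining (4.8), the commutation just established, and $\mathbb{P}u=u$, one obtains
\begin{align*}
A_0^{-\alpha}v=B_0^{-\alpha}(\mathbb{P}w)=\mathbb{P}(B_0^{-\alpha}w)=\mathbb{P}u=u,
\end{align*}
so $u\in R(A_0^{-\alpha})$, completing (4.9).

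I do not anticipate any serious obstacle. The two points worth checking carefully are that the contour $\Gamma$ chosen for $B_0$ is admissible for $A_0$ (which reduces to $\rho(B_0)\subset\rho(A_0)$, and this in turn follows from (4.7)) and that the Dunford integral defining $A_0^{-\alpha}$ converges in $L^{p}_{\sigma}$ with the required sectorial bound; the latter is automatic since, by Lemma 2.4 and Lemma 4.2, $A_0$ inherits on $L^{p}_{\sigma}$ the sectorial resolvent estimate of $B_0$.
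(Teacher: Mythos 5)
Your proposal is correct and follows essentially the same route as the paper: deduce (4.8) from the resolvent identity (4.7) fed through the Dunford integral, and obtain (4.9) from (4.8) together with the commutation of $\mathbb{P}$ with $B_0^{-\alpha}$ inherited from (4.2). The only cosmetic difference is in the reverse inclusion of (4.9), where the paper cancels $B_0^{-\alpha}$ (using its injectivity) to conclude $u=\mathbb{P}u\in L^p_\sigma$, whereas you sidestep that by directly exhibiting $\mathbb{P}w$ as a preimage under $A_0^{-\alpha}$; both are valid and equivalent in substance.
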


\vspace{5pt}

\begin{proof}
The property (4.8) follows from (4.7). We show (4.9). For an arbitrary $f\in R(A_{0}^{-\alpha})$, there exists $u\in L^{p}_{\sigma}$ such that $f=A_{0}^{-\alpha}u\in L^{p}_{\sigma}$. Since $f\in R(B_0^{-\alpha})$ by (4.8), we have $f\in R(B_0^{-\alpha})\cap L^{p}_{\sigma}$. Conversely, for $f\in R(B_0^{-\alpha})\cap L^{p}_{\sigma}$, there exists $u\in L^{p}$ such that $B_0^{-\alpha}u=f$. Since $f\in L^{p}_{\sigma}$, it follows from (4.2) that 

\begin{align*}
f=\mathbb{P}f=\mathbb{P}B_0^{-\alpha}u=B_0^{-\alpha}\mathbb{P}u.
\end{align*}\\
By multiplying $B_0^{\alpha}$ by $B_0^{-\alpha}u=B_0^{-\alpha}\mathbb{P}u$, we have $u=\mathbb{P}u\in L^{p}_{\sigma}$ and $f=B_0^{-\alpha}u=A_0^{-\alpha}u$ by (4.8). Hence $f\in R(A_0^{-\alpha})$. We proved (4.9). 
\end{proof}

\vspace{15pt}

We set the fractional power $A_0^{\alpha}u$ for $u\in D(A_{0}^{\alpha})=R(A_{0}^{-\alpha})$ as we did for $B_0$. Proposition 4.3 and Lemma 3.2 imply:

\vspace{15pt}

\begin{lem}
\begin{align*}
&A_0^{\alpha}=B_{0}^{\alpha}\quad \textrm{on}\ D(A_0^{\alpha}),\\
&D(A^{\alpha}_{0})=D(B^{\alpha}_{0})\cap L^{p}_{\sigma}.
\end{align*}\\
In particular, $D(A_{0}^{1/2})$ is continuously embedded to $W^{1,p}\cap L^{p}_{\sigma}$. 
\end{lem}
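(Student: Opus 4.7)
The plan is to deduce both claims of Lemma 4.4 directly from Proposition 4.3 together with Lemma 3.2, so the argument is essentially a bookkeeping exercise on the definition of the fractional power $A_0^\alpha$ as the inverse of the bounded operator $A_0^{-\alpha}$.

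First, I would handle the domain identity. By definition, $D(A_0^\alpha) = R(A_0^{-\alpha})$. Equation (4.9) of Proposition 4.3 gives $R(A_0^{-\alpha}) = R(B_0^{-\alpha}) \cap L^p_\sigma$, and by the analogous definition for $B_0$, the right-hand side equals $D(B_0^\alpha) \cap L^p_\sigma$. This yields the second displayed equation of the lemma.

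Next, to establish the pointwise equality $A_0^\alpha = B_0^\alpha$ on the common domain, I would take $u \in D(A_0^\alpha)$ and write $u = A_0^{-\alpha} v$ for some $v \in L^p_\sigma$, so that $A_0^\alpha u = v$ by definition. Equation (4.8) of Proposition 4.3 gives $A_0^{-\alpha} v = B_0^{-\alpha} v$, and hence $u = B_0^{-\alpha} v$. Since $u \in D(B_0^\alpha)$ by the domain identity just proved, applying $B_0^\alpha$ gives $B_0^\alpha u = v = A_0^\alpha u$.

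Finally, the Sobolev embedding is immediate: specializing the domain identity to $\alpha = 1/2$ gives $D(A_0^{1/2}) = D(B_0^{1/2}) \cap L^p_\sigma$, and Lemma 3.2 asserts $D(B_0^{1/2}) \hookrightarrow W^{1,p}(\Pi)$, so $D(A_0^{1/2}) \hookrightarrow W^{1,p}(\Pi) \cap L^p_\sigma$ with continuous injection. There is no substantive obstacle in this proof; the content of the lemma is already packaged in Proposition 4.3 and Lemma 3.2, and the only care required is to verify the compatibility of the two fractional power constructions via the resolvent identity (4.7), which is exactly what Proposition 4.3 provides.
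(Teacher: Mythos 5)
Your proof is correct and follows exactly the route the paper intends: the paper states Lemma 4.4 without a written proof, remarking only that it is implied by Proposition 4.3 and Lemma 3.2, and your argument is precisely the straightforward unwinding of those two results via the definition $D(A_0^\alpha) = R(A_0^{-\alpha})$. The only thing you supply that the paper leaves implicit is the explicit check that $u = A_0^{-\alpha}v = B_0^{-\alpha}v$ forces $B_0^\alpha u = A_0^\alpha u$, which is the right verification to make.
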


\vspace{15pt}
In order to construct mild solutions of (1.4), we prepare an estimate of the composition operator $A_0^{-1/2}\mathbb{P}\D$.
\vspace{15pt}

\begin{prop}
There exists a constant $C$ such that 

\begin{align*}
||A_{0}^{-1/2}\mathbb{P}\D\ F||_{p}\leq C||F||_{p}   \tag{4.10}
\end{align*}\\
for $F\in C^{\infty}_{c}(\Pi)$. The operator $A_{0}^{-1/2}\mathbb{P}\D$ is uniquely extendable to a bounded operator on $L^{p}$.
\end{prop}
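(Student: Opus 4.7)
The plan is to prove (4.10) by a duality argument that reduces matters to the boundedness of $B_{0}^{-1/2}$ from $L^{p'}$ into $W^{1,p'}$ (that is, Lemma 3.2 applied to the dual exponent) combined with an integration by parts.

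For $F\in C^{\infty}_{c}(\Pi)$ I would estimate $\|A_{0}^{-1/2}\mathbb{P}\D\, F\|_{L^{p}}$ by testing against an arbitrary $u\in L^{p'}(\Pi)$. First, (4.8) allows me to replace $A_{0}^{-1/2}$ by $B_{0}^{-1/2}$ on $L^{p}_{\sigma}$. Next, using that $B_{0}$ is self-adjoint under the $L^{p}$--$L^{p'}$ pairing (so that $\langle(\lambda+B_{0})^{-1}f,g\rangle=\langle f,(\bar{\lambda}+B_{0})^{-1}g\rangle$, and this lifts to the Dunford representation of $B_{0}^{-1/2}$), the self-adjointness of $\mathbb{P}$, and the commutation identity (4.2) on $L^{p'}$, I would rewrite
\begin{align*}
\langle A_{0}^{-1/2}\mathbb{P}\D\, F,\,u\rangle
=\langle\D\, F,\,B_{0}^{-1/2}\mathbb{P}u\rangle
=-\langle F,\,\nabla B_{0}^{-1/2}\mathbb{P}u\rangle,
\end{align*}
the last equality being integration by parts with no boundary contribution since $F$ has compact support in $\Pi$.

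Setting $w=B_{0}^{-1/2}\mathbb{P}u$, Lemma 3.2 with $p$ replaced by $p'$ yields $w\in W^{1,p'}(\Pi)$ and $\|w\|_{W^{1,p'}}\leq C\|\mathbb{P}u\|_{L^{p'}}$; combined with boundedness of $\mathbb{P}$ on $L^{p'}$ \cite{ST98}, this gives $\|\nabla w\|_{L^{p'}}\leq C\|u\|_{L^{p'}}$. H\"older's inequality then delivers $|\langle F,\nabla w\rangle|\leq C\|F\|_{L^{p}}\|u\|_{L^{p'}}$, and taking the supremum over $\|u\|_{L^{p'}}\leq 1$ produces (4.10) for $F\in C^{\infty}_{c}(\Pi)$. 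The unique extension to a bounded operator on $L^{p}$ then follows by density of $C^{\infty}_{c}(\Pi)$ in $L^{p}(\Pi)$.

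The main subtle point is the $L^{p}$--$L^{p'}$ duality $\langle B_{0}^{-1/2}f,g\rangle=\langle f,B_{0}^{-1/2}g\rangle$. This rests on the symmetry of $B_{0}$ under the chosen boundary conditions: for smooth vector fields $u,v$ both satisfying $\nabla\times\cdot\times n=0$ and $\cdot\, n=0$ on $\partial\Pi$, two integrations by parts give $\langle B_{0}u,v\rangle=\langle u,B_{0}v\rangle$ with all boundary contributions vanishing. Together with the resolvent estimates of Lemma 2.4, which are valid uniformly for $p\in(1,\infty)$, this symmetry transfers to the resolvents on $L^{p}$ and $L^{p'}$, and hence to the Dunford integral defining $B_{0}^{-1/2}$. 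Once this duality is in place, the rest of the argument is a straightforward application of Lemma 3.2 and H\"older's inequality.
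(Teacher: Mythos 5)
Your proposal is correct and follows essentially the same duality argument as the paper: establish the symmetry of the square root under the $L^{p}$--$L^{p'}$ pairing, pair $A_{0}^{-1/2}\mathbb{P}\D\,F$ against a test function, integrate by parts, and invoke the continuous embedding $D(A_{0}^{1/2})\hookrightarrow W^{1,p'}$ from Lemma 3.2/Lemma 4.4. The only cosmetic difference is that you pass to $B_{0}^{-1/2}$ via (4.8) and the commutation identity (4.2), whereas the paper works directly with $A_{0}^{-1/2}$ and Lemma 4.4, but these are the same computation.
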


\vspace{5pt}

\begin{proof}
We first observe that the operator $A_0=A_{0,p}$ defined on $L^{p}_{\sigma}$ satisfies

\begin{align*}
(A_{0,p}f,g)=(f,A_{0,p'}g),\quad f\in D(A_{0,p}),\ g\in D(A_{0,p'}).  \tag{4.11}
\end{align*}\\
For simplicity, we abbreviate the subscript $p$. By (4.11), we see that the same property holds for the resolvent of $A_0$ and we have

\begin{align*}
(A_{0}^{-1/2}f,g)=(f,A_{0}^{-1/2}g),\quad f\in L^{p}_{\sigma},\ g\in L^{p'}_{\sigma}. 
\end{align*}\\
For $\varphi\in C_{c}^{\infty}$, integration by parts yields 

\begin{align*}
(A_{0}^{-1/2}\mathbb{P}\D\ F, \varphi )
=(A_{0}^{-1/2}\mathbb{P}\D\ F, \mathbb{P}\varphi )
=(\D\ F, A_{0}^{-1/2}\mathbb{P}\varphi )  
=-(F, \nabla A_{0}^{-1/2}\mathbb{P}\varphi ).
\end{align*}\\
Since $D(A_0^{1/2})$ is continuously embedded to $W^{1,p'}$ by Lemma 4.4, there exists a constant $C$ such that 

\begin{align*}
||\nabla A_{0}^{-1/2}\mathbb{P}\varphi||_{p'}\leq C||\varphi||_{p'}.
\end{align*}\\
The estimate (4.10) follows from the duality.
\end{proof}

\vspace{15pt}

\section{Existence of mild solutions}
\vspace{10pt}

We construct solutions of an integral equation (1.4) by using analyticity of the Stokes semigroup. We first prepare linear estimates for an iterative argument.

\vspace{15pt}

\begin{lem}
Let $T_0>0$, $1<p\leq q<\infty$, $\alpha\in (0,1)$ and $|k|\leq 2$. Let $p, q$ satisfy $3(1/p-1/q)\leq 1$. There exist constants $C_1-C_4$ such that

\begin{align*}
||\partial_x^{k}e^{-tA}f||_{p}&\leq \frac{C_1}{t^{\frac{|k|}{2}}}||f||_{p},   \tag{5.1}\\
||e^{-tA}f||_{q}&\leq \frac{C_2}{t^{\frac{3}{2}(\frac{1}{p}-\frac{1}{q})}}||f||_{p},  \tag{5.2}\\
||e^{-tA}\mathbb{P}\D\ f||_{q}&\leq \frac{C_3}{t^{\frac{1}{2}+\frac{3}{2}(\frac{1}{p}-\frac{1}{q})}}||f||_{p},  \tag{5.3}\\
||(e^{-\rho A}-1)e^{-t A}f||_{p}
&\leq \frac{C_4 }{t^{\alpha}}\rho^{\alpha} ||f||_{p}, \tag{5.4}
\end{align*}\\
for $f\in L^{p}_{\sigma}$ and $0<t,\rho\leq T_0$. 
\end{lem}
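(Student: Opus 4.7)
I would reduce all four estimates to bounds for the shifted, invertible semigroup $e^{-tA_0}$ where $A_0 = A+\lambda_0$, since on $[0,T_0]$ we have $e^{-tA} = e^{t\lambda_0}e^{-tA_0}$ with a uniformly bounded scalar prefactor. The two analytic-semigroup ingredients I will use throughout are $\|A_0^\alpha e^{-tA_0}\|_{L^p_\sigma\to L^p_\sigma}\le Ct^{-\alpha}$ for $\alpha\in[0,1]$, which follows from the bounded analyticity in Lemma 4.2 together with a moment inequality, and the commutation $A_0^\alpha e^{-tA_0}=e^{-tA_0}A_0^\alpha$ on $D(A_0^\alpha)$. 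The spatial regularity inputs will be Lemma 4.4 (giving $\|\nabla u\|_p\le C\|A_0^{1/2}u\|_p$) and Proposition 4.5 (giving boundedness of $A_0^{-1/2}\mathbb{P}\,\mathrm{div}$ on $L^p$).

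For (5.1), the case $|k|=0$ is just uniform boundedness of the semigroup. For $|k|=1$ I would combine Lemma 4.4 with the analyticity bound $\|A_0^{1/2}e^{-tA_0}\|\le Ct^{-1/2}$ to obtain $\|\nabla e^{-tA}f\|_p\le C\|A_0^{1/2}e^{-tA}f\|_p\le Ct^{-1/2}\|f\|_p$. For $|k|=2$ I would use $D(A_0)\subset W^{2,p}(\Pi)$ together with the elliptic bound $\|\nabla^2 u\|_p\le C(\|A_0u\|_p+\|u\|_p)$, applied to $u = e^{-tA}f$ with $\|A_0e^{-tA_0}\|\le Ct^{-1}$.

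The main work will lie in (5.2). I would set $\alpha=\tfrac{3}{2}(\tfrac{1}{p}-\tfrac{1}{q})\in[0,\tfrac{1}{2}]$, so the hypothesis $3(1/p-1/q)\le 1$ places us at or below the critical Sobolev scaling. By Proposition 4.3 the fractional powers of $A_0$ on $L^p_\sigma$ coincide with those of $B_0$, so Proposition 3.1 transfers a bounded $H^\infty$-calculus to $A_0$; consequently $D(A_0^\alpha)=[L^p_\sigma,D(A_0)]_\alpha$. Since $D(A_0)\subset W^{2,p}(\Pi)$, functoriality of complex interpolation together with the identification $[L^p(\Pi),W^{2,p}(\Pi)]_\alpha=H^{2\alpha,p}(\Pi)$ (proved by the same Muramatsu-type extension argument used in Lemma 3.2) gives $D(A_0^\alpha)\hookrightarrow H^{2\alpha,p}(\Pi)\hookrightarrow L^q(\Pi)$, the last being Sobolev embedding at the endpoint $2\alpha=3(1/p-1/q)$. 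Therefore $A_0^{-\alpha}\colon L^p_\sigma\to L^q(\Pi)$ is bounded, and writing $e^{-tA}f = e^{t\lambda_0}A_0^{-\alpha}(A_0^\alpha e^{-tA_0}f)$ yields the required bound with rate $t^{-\alpha}$. This step is the main obstacle: it is the only place in the proof that simultaneously invokes the $H^\infty$-calculus, complex interpolation of Sobolev spaces, and the endpoint Sobolev embedding.

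For (5.3), I would split $e^{-tA}=e^{-(t/2)A}e^{-(t/2)A}$ and factor
\[
e^{-tA}\mathbb{P}\,\mathrm{div}\,F \;=\; e^{-(t/2)A}\bigl[e^{-(t/2)A}A_0^{1/2}\bigr]\bigl[A_0^{-1/2}\mathbb{P}\,\mathrm{div}\,F\bigr];
\]
Proposition 4.5 bounds the rightmost bracket on $L^p$, the middle bracket contributes $t^{-1/2}$ on $L^p_\sigma$, and (5.2) applied to the outer factor contributes the extra $t^{-3(1/p-1/q)/2}$. For (5.4), the fractional-power identity $(e^{-\rho A_0}-1)A_0^{-\alpha}w = -\int_0^\rho A_0^{1-\alpha}e^{-sA_0}w\,ds$ together with $\|A_0^{1-\alpha}e^{-sA_0}\|\le Cs^{\alpha-1}$ yields $\|(e^{-\rho A_0}-1)g\|_p\le C\rho^\alpha\|A_0^\alpha g\|_p$; applying this to $g=e^{-tA_0}f$ and invoking $\|A_0^\alpha e^{-tA_0}\|\le Ct^{-\alpha}$ gives (5.4), after absorbing the scalar factors $e^{t\lambda_0}$ and $e^{\rho\lambda_0}-1$ using $t,\rho\le T_0$.
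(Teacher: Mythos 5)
Your proposal is correct, but for two of the four estimates it takes a noticeably heavier route than the paper. For (5.1) and (5.3) you and the paper agree in substance: (5.1) is a consequence of the resolvent estimate (2.12) (which is exactly what your $D(A_0^{1/2})\hookrightarrow W^{1,p}$ plus $\|\nabla^2 u\|_p\le C\|A_0u\|_p$ encode), and for (5.3) both you and the paper factor through $A_0^{1/2}e^{-tA}A_0^{-1/2}\mathbb{P}\,\mathrm{div}$ with Proposition 4.5 and (5.2). Where you diverge is in (5.2) and (5.4). For (5.2) the paper simply applies the Gagliardo--Nirenberg inequality $\|\varphi\|_{L^q(\Pi)}\le C\|\varphi\|_{L^p(\Pi)}^{1-\sigma}\|\varphi\|_{W^{1,p}(\Pi)}^{\sigma}$ with $\sigma=3(1/p-1/q)$ to $\varphi=e^{-tA}f$ and then invokes (5.1); this uses only $W^{1,p}$ control and is entirely self-contained. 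Your route through $D(A_0^\alpha)=[L^p_\sigma,D(A_0)]_\alpha\hookrightarrow H^{2\alpha,p}\hookrightarrow L^q$ works, but note it requires the interpolation identity $[L^p(\Pi),W^{2,p}(\Pi)]_\alpha=H^{2\alpha,p}(\Pi)$ for the full range $\alpha\in(0,1/2]$, whereas Lemma 3.2 only records the case $\alpha=1/2$; you would need to observe that the Muramatsu extension operator is universal and then interpolate, which is a genuine (if small) extra step. For (5.4) the paper writes $(e^{-\rho A}-1)e^{-tA}f=\int_0^\rho Ae^{-(s+t)A}f\,ds$, bounds this by $C\|f\|_p\int_0^{\rho/t}\frac{dr}{r+1}$, and uses the elementary inequality $r+1\ge r^{1-\alpha}$; your version via $\|(e^{-\rho A_0}-1)A_0^{-\alpha}\|\le C\rho^\alpha$ is a standard fractional-power reformulation and is equally valid, just more machinery for the same scalar integral. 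In short: same structure, but the paper deliberately keeps (5.2) and (5.4) at the level of Gagliardo--Nirenberg and a direct integral, reserving the fractional-power technology for (5.3), whereas you invoke it throughout. Both are correct; the paper's choice makes Lemma 5.1 independent of the interpolation-space identifications at exponents other than $1/2$.
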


\vspace{5pt}

\begin{proof}
The estimate (5.1) follows from the resolvent estimate (2.12). We prove (5.2). Applying the Gagliardo-Nirenberg inequality in $\mathbb{R}^{3}$ for an extension to $\varphi\in W^{1,q}(\Pi)$ \cite{Stein70} implies

\begin{align*}
||\varphi||_{L^{q}(\Pi)}\leq C||\varphi||_{L^{p}(\Pi)}^{1-\sigma}||\varphi||_{W^{1,p}(\Pi)}^{\sigma}  \tag{5.5}
\end{align*}\\
for $\sigma=3(1/p-1/q)\in [0,1]$. The estimate (5.2) follows from (5.1) by applying (5.5) for $\varphi=e^{-t A}f$.

We prove (5.3). Since the operator $A_0=A+\lambda_0$ is invertible and generates a bounded analytic semigroup on $L^{q}_{\sigma}$, we have 

\begin{align*}
||A_0^{\frac{1}{2}}e^{-tA_0}g||_{q}\leq \frac{C}{t^{\frac{1}{2}}}||g||_{q}\quad g\in L^{q}_{\sigma}.
\end{align*}\\
Since $e^{-tA}=e^{-tA_0}e^{-t \lambda_0 }$, we have $||A_0^{1/2}e^{-tA}g||_{q}\leq Ct^{-1/2}||g||_{q}$ for $t\leq T_0$. It follows from (4.10) and (5.2) that 

\begin{align*}
||e^{-tA}\mathbb{P}\D\ f||_{q}
&=||A_0^{\frac{1}{2}}e^{-tA}A_0^{-\frac{1}{2}}\mathbb{P}\D\ f||_{q}\\
&\leq \frac{C}{t^{\frac{1}{2}}}||e^{-\frac{t}{2}A}A_0^{-\frac{1}{2}}\mathbb{P}\D\ f||_{q}\\
&\leq \frac{C}{t^{\frac{1}{2}+\frac{3}{2}(\frac{1}{q}-\frac{1}{p})}}|| f||_{p}.
\end{align*}\\
Thus (5.3) holds.

It remains to show (5.4). It follows from (5.1) that  

\begin{align*}
||(e^{-\rho A}-1)e^{-t A}f||_{p}
&=\big\|\int_{0}^{\rho}\frac{\dd}{\dd s}e^{-sA}e^{-t A}f\dd s\big\|_{p}\\
&=\big\|\int_{0}^{\rho}Ae^{-(s+t)A}f\dd s\big\|_{p}\\
&\leq C||f||_{p}\int_{0}^{\rho}\frac{\dd s}{s+t} 
=C||f||_{p}\int_{0}^{\frac{\rho}{t}}\frac{\dd r}{r+1}.
\end{align*}\\
Since $r+1\geq r^{1-\alpha}$ for $r\geq 0$, the estimate (5.4) holds.
\end{proof}

\vspace{15pt}

By using the estimates (5.1)-(5.4), we construct mild solutions of (1.4). Let $C^{\gamma}((0,T]; L^{p})$ denote the space of all functions in $C^{\gamma}([\delta,T]; L^{p})$ for all $\delta\in (0,T)$.

\vspace{15pt}

\begin{thm}
For $u_0\in L^{p}_{\sigma}$ and $p\in [3,\infty)$, there exists $T>0$ and a unique solution $u\in C([0,T]; L^{p})$ of (1.4) satisfying 

\begin{align*}
t^{\frac{1}{2}}\nabla u\in C([0,T]; L^{p}),&    \tag{5.6}\\
u\in C^{\gamma}((0,T]; L^{p}),&   \tag{5.7}\\
\nabla u\in C^{\frac{\gamma}{2}}((0,T]; L^{p}),&\quad \gamma\in (0,1).  \tag{5.8}
\end{align*}\\
For $p=3$, $t^{3/2(1/3-1/q)}u\in C([0,T]; L^{q})$ vanishes at time zero for $q\in (3,\infty)$.
\end{thm}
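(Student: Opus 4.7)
The plan is to run a Picard iteration for the map
$$\Phi(u)(t) = e^{-tA}u_0 - \int_0^t e^{-(t-s)A}\p\D(u\otimes u)(s)\,\dd s,$$
using the divergence form $u\cdot \nabla u = \D(u\otimes u)$ (valid because $u$ is solenoidal) so that the nonlinear term is handled by (5.3). For the subcritical range $p\in(3,\infty)$, I would work in
$$X_T := \bigl\{u\in C([0,T]; L^p_\sigma)\,:\, t^{1/2}\nabla u\in C([0,T]; L^p)\bigr\},$$
with the obvious norm. The linear term $e^{-tA}u_0$ lies in $X_T$ with norm bounded by $C\|u_0\|_p$, by (5.1)--(5.2). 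For the Duhamel term, (5.3) applied with source $L^{p/2}$ and target $L^p$ gives an integrand of size $(t-s)^{-1/2-3/(2p)}\|u(s)\|_p^2$, whose exponent is strictly less than $1$ precisely when $p>3$, producing a time factor $T^{1/2-3/(2p)}$ and hence $T^\epsilon$-smallness of the bilinear form. The gradient part of the norm is closed analogously by splitting $\nabla e^{-tA}=\nabla e^{-tA/2}\,e^{-tA/2}$ and combining (5.1) with (5.3). A contraction argument on a small ball in $X_T$ then yields existence and uniqueness for $T$ small.

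The critical case $p=3$ is the main obstacle, because the above exponent equals $1$ exactly and no $T^\epsilon$-smallness is available. The standard remedy is to enlarge the iteration space with an auxiliary seminorm
$$[u]_{q,T} := \sup_{0<t<T} t^{\sigma}\|u(t)\|_q,\qquad \sigma := \tfrac{3}{2}\!\left(\tfrac13-\tfrac1q\right),$$
for some fixed $q\in(3,6]$, and work in $u\in C([0,T]; L^3_\sigma)$ with $[u]_{q,T}<\infty$. Applying (5.3) with target $L^q$ and source $L^{q/2}$, and again with target $L^3$ and source $L^{q/2}$, beta-function identities of the form
$$\int_0^t (t-s)^{-\frac12-\frac{3}{2q}}\, s^{-2\sigma}\,\dd s = C\,t^{-\sigma},\qquad \int_0^t (t-s)^{-\frac{3}{q}}\, s^{-2\sigma}\,\dd s = C,$$
yield a closed bilinear estimate on the Duhamel term, \emph{without} any factor of $T$. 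The initial-datum contribution $[e^{-\cdot A}u_0]_{q,T}$ is bounded by $C\|u_0\|_3$ from (5.2) but is not small; however it tends to $0$ as $T\to 0^+$, first for $u_0$ in the dense subspace $L^3_\sigma\cap L^q_\sigma$ by continuity of $t\mapsto t^\sigma e^{-tA}u_0$ at $0$, and then for general $u_0\in L^3_\sigma$ by a standard density argument together with the uniform bound (5.2). Choosing $T$ small enough that this quantity lies below the contraction threshold closes the fixed point; the stated vanishing of $t^\sigma\|u(t)\|_q$ at $t=0$ is then built into the construction, and the bound $t^{1/2}\nabla u\in L^3$ is recovered a posteriori by plugging the solution back into the Duhamel formula and applying (5.3). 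Higher $q\in(3,\infty)$ beyond the range $(3,6]$ are reached by bootstrapping with (5.2) from any positive time.

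For the H\"older regularity (5.7)--(5.8), I would split, for $0<t<t+\rho\leq T$,
$$u(t+\rho)-u(t) = (e^{-\rho A}-1)u(t) - \int_t^{t+\rho} e^{-(t+\rho-s)A}\p\D(u\otimes u)(s)\,\dd s,$$
rewrite $u(t)$ via the Duhamel identity as $e^{-(t/2)A}u(t/2)$ plus a smoother tail so that the first term can be estimated by (5.4) at the cost of a negative power of $t$, and bound the second by (5.3) together with the size bounds already obtained. The corresponding estimate for $\nabla u$ proceeds in the same way, picking up an extra $t^{-1/2}$ from the gradient smoothing. Once the critical case has been handled as above, this step is essentially routine.
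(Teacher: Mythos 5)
Your proposal follows essentially the same route as the paper's proof: a Picard/Kato iteration in the weighted space $\sup_{t}t^{\gamma}\|u(t)\|_{q}$ with $\gamma=\tfrac32(\tfrac13-\tfrac1q)$, closed via the linear estimates (5.1)--(5.4), with smallness of the first iterate obtained from strong continuity of $e^{-tA}$ on $L^{3}_{\sigma}$, continuity at $t=0$ recovered by plugging back into the Duhamel formula, and H\"older regularity from (5.4). The minor points you spell out more explicitly than the paper (the $T^{\epsilon}$-contraction for $p>3$, the restriction $q\in(3,6]$ needed to return to $L^{3}$ via (5.3), the density argument for the initial term, and the bootstrap to all $q\in(3,\infty)$) are all consistent with what the paper leaves implicit.
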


\vspace{5pt}

\begin{proof}
We prove the case $p=3$. We are able to prove the case $p\in (3,\infty)$ by a similar way. We set a sequence $\{u_j\}$ by 

\begin{equation*}
\begin{aligned}
u_{j+1}&=e^{-tA}u_0-\int_{0}^{t}e^{-(t-s)A}\mathbb{P}\D\ u_{j}u_{j}\dd s,\\
u_1&=e^{-tA}u_0.
\end{aligned}
\end{equation*}\\
For $q\in (3,\infty)$ and $\gamma =3/2(1/3-1/q)$, we set 

\begin{align*}
K_{j}=\sup_{0\leq t\leq T}t^{\gamma} ||u_j||_{q}(t).
\end{align*}\\
We take $r=q/2$. Since $3(1/r-1/q)=3/q< 1$, it follows from (5.1) and (5.3) that 

\begin{align*}
||u_{j+1}||_{q}
&\leq ||e^{-tA}u_0||_{q}+\int_{0}^{t}\frac{C}{(t-s)^{\frac{1}{2}+\frac{3}{2}(\frac{1}{r}-\frac{1}{q} ) }}||u_j||_{2r}\dd s\\
&\leq ||e^{-tA}u_0||_{q}+\frac{C'}{t^{\gamma}}K_j^{2}.
\end{align*}\\
We have $K_{j+1}\leq K_1+C_0K_{j}^{2}$. Since $e^{-t A}$ is strongly continuous on $L^{3}_{\sigma}$, we see that $K_1\to 0$ as $T\to0$. We take small $T>0$ so that $K_1\leq (4C_0)^{-1}$ and obtain $K_{j}\leq 2K_1$ for all $j\geq 1$. 

By estimating $u_{j+1}-u_j$ by a similar way, we are able to show that 

\begin{align*}
\lim_{j\to \infty}\sup_{0\leq t\leq T}t^{\gamma} ||u_{j+1}-u_{j}||_{q}(t)=0.
\end{align*}\\
Thus a limit $u$ satisfies the integral equation (1.4) such that $t^{\gamma}u\in C([0,T]; L^{q})$ and $t^{\gamma}u$ vanishes at time zero. 

We show continuity at time zero. We set $K=\sup_{0\leq t\leq T}t^{\gamma}||u||_{q}(t)$ and $r=q/2$. It follows from (5.3) that 

\begin{align*}
||u-e^{-tA}u_0||_{3}
&\leq \int_{0}^{t}||e^{-(t-s)A}\mathbb{P}\D\ u u||_{3}\dd s\\
&\leq \int_{0}^{t}\frac{C}{(t-s)^{\frac{1}{2}+\frac{3}{2}(\frac{1}{r}-\frac{1}{3}) }s^{2\gamma} }  \dd s K^{2}=C'K^{2}\to 0\quad \textrm{as}\ T\to0.
\end{align*}\\
Thus $u\in C([0,T]; L^{3})$. By a similar way, $t^{1/2}\nabla u\in C([0,T]; L^{3})$ follows. We obtain the H\"older continuity (5.7) and (5.8) by estimating $u(t)-u(\tau)$ by using (5.4). The proof is complete.
\end{proof}

\vspace{10pt}

\section{Higher regularity}
\vspace{10pt}

We prove Theorem 1.1. It remains to show that mild solutions constructed in Theorem 5.2 are smooth in $\overline{\Pi}\times (0,T]$. We use the fractional power of $A_0=A+\lambda_0$, defined in Section 4. By multiplying $e^{-\lambda_0 t}$ by the mild solution $u$, we see that $v=e^{-tA_0}u$ satisfies 

\begin{equation*}
\begin{aligned}
&v=e^{-tA_0}v_0-\int_{0}^{t}e^{-(t-s)A_0}F(s)\dd s,\\   
&F=e^{\lambda_0 t}\mathbb{P}v\cdot \nabla v.
\end{aligned}
\tag{6.1}
\end{equation*}

\vspace{15pt}
Our goal is to prove:
\vspace{15pt}

\begin{thm}
All derivatives of $v$ belong to $C^{\mu}((0,T]; L^{p})$ for $\mu\in (0,1/2)$ and $p\in (3,\infty)$. In particular, $v$ is smooth in $\overline{\Pi}\times (0,T]$.
\end{thm}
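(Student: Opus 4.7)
The plan is a parabolic bootstrap on each compact subinterval $[\delta,T]\subset(0,T]$, using analyticity of the semigroup $e^{-tA_0}$ together with the higher-order elliptic regularity for the Neumann Laplacian in the infinite cylinder established in Appendix A. Since $v$ and $u$ differ only by the scalar factor $e^{-\lambda_0 t}$, regularity of $v$ immediately transfers to $u$. Throughout, the hypothesis $p\in(3,\infty)$ will be used repeatedly through the Sobolev embedding $W^{1,p}(\Pi)\hookrightarrow L^{\infty}(\Pi)$.

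For the first bootstrap step, Theorem 5.2 together with the above embedding shows that on each $[\delta,T]$ the functions $v$ and $\nabla v$ are bounded and $\mu_0$-Hölder continuous in $t$ with values in $L^{p}$ for some $\mu_0\in(0,1/2)$. Boundedness of $\mathbb{P}$ on $L^{p}$ then gives
\begin{align*}
F=e^{\lambda_0 t}\mathbb{P}(v\cdot\nabla v)\in C^{\mu}([\delta,T];L^{p}_{\sigma})
\end{align*}
for some $\mu\in(0,1/2)$. Applying the classical maximal Hölder regularity theorem for analytic semigroups to the integral equation (6.1) yields
\begin{align*}
v\in C^{\mu}([\delta',T];D(A_0))\cap C^{1+\mu}([\delta',T];L^{p}_{\sigma})
\end{align*}
for any $\delta'>\delta$. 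Since $D(A_0)\hookrightarrow W^{2,p}$, this produces one full gain of classical spatial and temporal regularity on $[\delta',T]$.

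For the iteration, I would differentiate (6.1) formally in $t$: the function $w=\partial_t v$ satisfies $\partial_t w+A_0 w=G$ with $G=\lambda_0 F+e^{\lambda_0 t}\mathbb{P}(w\cdot\nabla v+v\cdot\nabla w)$. The previous step controls $v,\nabla v$ in $L^\infty$ and $w,\nabla w$ in $L^{p}$, all $\mu$-Hölder in $t$, so $G\in C^{\mu}([\delta'',T];L^{p}_{\sigma})$ and the Hölder regularity theorem gives $\partial_t v\in C^{\mu}((0,T];D(A_0))$. Iterating this argument, every $\partial_t^{k}v$ lies in $C^{\mu}((0,T];D(A_0))$. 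For spatial bootstrapping I use the pointwise-in-$t$ elliptic Neumann system
\begin{align*}
-\Delta(\partial_t^{k}v)=-\partial_t^{k+1}v-\lambda_0\partial_t^{k}v-\partial_t^{k}\mathbb{P}(v\cdot\nabla v)\quad\text{in }\Pi,
\end{align*}
with $\nabla\times\partial_t^{k}v\times n=0$ and $\partial_t^{k}v\cdot n=0$ on $\partial\Pi$ inherited by linearity. Induction on $m$ shows that the right-hand side is bounded in $W^{m,p}$, so Appendix A gives $\partial_t^{k}v\in W^{m+2,p}$, and Sobolev embedding then produces classical $C^\infty$-regularity on $\overline{\Pi}\times[\delta,T]$ for arbitrary $\delta>0$.

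The main obstacle will be the bookkeeping of the nested bootstrap: after each step one must shift the semigroup origin forward to an arbitrary $\delta'>\delta$, verify that the new forcing lies in $C^{\mu}([\delta',T];L^{p}_{\sigma})$, and confirm that products such as $w\cdot\nabla v$ or $\partial_t^{k}(v\cdot\nabla v)$ remain controlled in the appropriate Sobolev space. This is where the restriction $p>3$ enters critically, via $W^{j,p}\hookrightarrow L^{\infty}$ for $j\ge 1$ which keeps all product estimates linear. Once this mechanism is in place, the analytic semigroup drives the temporal bootstrap and Appendix A drives the spatial bootstrap in parallel, delivering the claimed smoothness on $\overline{\Pi}\times(0,T]$.
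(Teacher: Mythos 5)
Your overall architecture --- a parabolic bootstrap alternating Hölder maximal regularity for $e^{-tA_0}$ with the higher elliptic estimates of Appendix A, keeping the nonlinear products linear via $W^{1,p}\hookrightarrow L^\infty$ for $p>3$ --- is the same as the paper's. But there is a genuine gap at the point where you differentiate (6.1) in time. After the first application of Lemma 6.2 (i) you have $v\in C^\mu((0,T];D(A_0))\cap C^{1+\mu}((0,T];L^p_\sigma)$, hence $v,\nabla v,\nabla^2 v$ Hölder with values in $L^p$ and $w:=\partial_t v$ Hölder with values in $L^p$, but \emph{no} Hölder control of $\nabla\partial_t v$. You then assert that ``the previous step controls $v,\nabla v$ in $L^\infty$ and $w,\nabla w$ in $L^p$,'' but $\nabla w$ is precisely what is missing: membership in $C^\mu(\cdot;W^{2,p})$ says nothing about the time derivative of $v$ measured in $W^{1,p}$. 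Consequently the forcing $G=\lambda_0 F+e^{\lambda_0 t}\mathbb{P}(w\cdot\nabla v+v\cdot\nabla w)$ contains the uncontrolled term $v\cdot\nabla w$ and cannot be placed in $C^\mu([\delta'',T];L^p_\sigma)$; the iteration stalls before it begins.

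The paper bridges exactly this half-derivative gap in Proposition 6.3. Using solenoidality of $v$ and $w$ to rewrite $w\cdot\nabla v+v\cdot\nabla w=\D\,(v\otimes w+w\otimes v)$, the boundedness of $A_0^{-1/2}\mathbb{P}\D$ on $L^p$ (Proposition 4.5) converts the uncontrolled gradient into the Hölder-controlled product $vw$ (one factor in $L^\infty$, the other in $L^p$). Concretely, the paper applies $A_0^{1/2}$ to the Duhamel integral, integrates by parts in $s$, and reduces to $\int_\delta^t e^{-(t-s)A_0}A_0^{-1/2}F'(s)\,\dd s$ with $A_0^{-1/2}F'\in C^\mu((0,T];L^p)$; Lemma 6.2 (i) then gives $\partial_t v\in C^\mu((0,T];D(A_0^{1/2}))$, whence $\nabla\partial_t v\in C^\mu((0,T];L^p)$ via Lemma 4.4. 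Only after this half-step can the time-differentiated equation be fed back into Lemma 6.2 (i) with a genuinely Hölder $L^p$-forcing, which is what Proposition 6.4 does. Your proposal needs some substitute for this fractional-power device --- an argument that upgrades $\partial_t v$ by one spatial derivative without already assuming Hölder control of $\nabla\partial_t v$; as written, it does not close.
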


\vspace{5pt}
\begin{proof}[Proof of Theorem 1.1]
The assertion follows from Theorem 6.1
\end{proof}

\vspace{15pt}
We prove Theorem 6.1 by using regularity results for linear operators.

\vspace{15pt}

\begin{lem}
\noindent
(i) Let $f\in L^{1}(0,T; L^{p})\cap C^{\mu}((0,T]; L^{p})$ for $p\in (1,\infty)$ and $\mu\in (0,1)$. Then, 

\begin{align*}
w=\int_{0}^{t}e^{-(t-s)A_0}f(s)\dd s
\end{align*}\\
belongs to $ C^{1+\mu}((0,T]; L^{p})\cap C^{\mu}((0,T]; D(A_0))$. \\
\noindent
(ii) For an integer $m\geq 0$, there exists a constant $C$ such that 

\begin{align*}
||\mathbb{P}f||_{W^{m,p}}+||\mathbb{Q}f||_{W^{m,p}}\leq C||f||_{W^{m,p}},\quad  f\in W^{m,p}.   \tag{6.2}
\end{align*}\\
\noindent 
(iii) There exists a constant $C$ such that 

\begin{align*}
||w||_{W^{m+2,p}}\leq C||A_0 w||_{W^{m,p}}   \tag{6.3}
\end{align*}\\
for $w\in D(A_0)$ such that $A_0u\in W^{m,p}$.
\end{lem}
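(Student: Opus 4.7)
My plan is to handle the three assertions of the lemma separately, using standard analytic semigroup theory for (i) and elliptic regularity tailored to the infinite cylinder for (ii) and (iii).

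For part (i), I would invoke the classical maximal Hölder regularity theory for analytic semigroups. Since $-A_0$ generates a bounded analytic semigroup on $L^p_\sigma$ by Lemma 4.2, the standard bound $\|A_0 e^{-tA_0}\| \leq C/t$ follows from the Dunford integral representation together with the resolvent estimate (2.12). The key identity, valid pointwise for each $t \in (0,T]$, is
$$A_0 w(t) = \int_0^t A_0 e^{-(t-s)A_0}\bigl(f(s) - f(t)\bigr)\,ds + \bigl(I - e^{-tA_0}\bigr) f(t),$$
whose first integrand is $O\bigl((t-s)^{\mu-1}\bigr)$ by the Hölder hypothesis and hence integrable. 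To pass from this pointwise identity to $A_0 w \in C^\mu((0,T]; L^p)$, I would estimate the increment $A_0 w(t+h) - A_0 w(t)$ by splitting the $s$-integral at $s = t$ and controlling each piece through the semigroup bound combined with the Hölder exponent, extracting a gain $h^\mu$. Once $A_0 w \in C^\mu$ is known, the identity $\partial_t w = f - A_0 w$ immediately yields $w \in C^{1+\mu}((0,T]; L^p) \cap C^\mu((0,T]; D(A_0))$.

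For part (ii), the case $m = 0$ is precisely \cite[Theorem 6]{ST98}, already cited in Section 4. For $m \geq 1$, I would characterize the Helmholtz decomposition through the Neumann problem: writing $\mathbb{Q} f = \nabla \phi$, the potential $\phi$ solves $\Delta \phi = \D\ f$ in $\Pi$ with $\partial_n \phi = f \cdot n$ on $\partial\Pi$. Higher-order elliptic regularity for this inhomogeneous Laplace--Neumann problem in the infinite cylinder, of the same nature as that proved in Appendix A, then delivers $\|\nabla \phi\|_{W^{m,p}} \leq C\|f\|_{W^{m,p}}$, whence (6.2) follows for both $\mathbb{Q} f$ and $\mathbb{P} f = f - \mathbb{Q} f$.

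For part (iii), the function $w \in D(A_0)$ satisfies the boundary-value problem $\lambda_0 w - \Delta w = A_0 w$ in $\Pi$ with $\nabla \times w \times n = 0$ and $w \cdot n = 0$ on $\partial\Pi$. For $m = 0$, Lemma 2.4 applied with $\lambda = \lambda_0$ and $f = A_0 w$ delivers $\|w\|_{W^{2,p}} \leq C\|A_0 w\|_{L^p}$. For $m \geq 1$, the higher-order resolvent regularity estimate for the Laplace operator in $\Pi$ established in Appendix A directly supplies (6.3). The main obstacle is part (i): the pointwise identity for $A_0 w(t)$ alone is not enough, and one must track the Hölder modulus carefully through the increment estimate, paying attention to the moving upper limit of integration and to the interplay between $f(s) - f(t)$ and the semigroup singularity at $s = t$, in order to extract the claimed time-regularity of $A_0 w$.
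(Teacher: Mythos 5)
Your proposal is correct and matches the paper's route, which simply cites Lunardi's maximal H\"older regularity theorem for (i), \cite[Theorem 6]{ST98} for (ii), and Lemmas 2.4 and A.1 for (iii); you have essentially reconstructed the content of those citations. One small caution on (i): since $f$ is only H\"older continuous away from $t=0$, the splitting $f(s)-f(t)=O((t-s)^{\mu})$ fails near $s=0$, so one should first rewrite $w(t)=e^{-(t-\delta)A_0}w(\delta)+\int_{\delta}^{t}e^{-(t-s)A_0}f(s)\,\dd s$ for $t>\delta>0$ (as the paper does in Proposition 6.3) before running your increment estimate.
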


\vspace{5pt}
\begin{proof}
The assertion (i) holds for generators of an analytic semigroup in an abstract Banach space. See \cite[4.3.1 Theorem 4.3.4]{Lunardi}. The regularity estimate (ii) is proved in \cite[Theorem 6]{ST98}. The assertion (iii) follows from Lemmas 2.4 and A.1 in Appendix A.
\end{proof}

\vspace{15pt}
We successively apply the regularity results (i)-(iii) and prove that all derivatives of $v$ belong to the H\"older space $C^{\mu}((0,T]; L^{p})$ for $p\in (3,\infty)$. Since solutions of (6.1) for $v_0\in L^{3}$ belong to $L^{p}$ for positive time, by taking some $t_0>0$ as an initial time, we may assume that initial data is in $L^{p}$. Since $F\in C^{\mu}((0,T]; L^{p})$ for $\mu\in (0,1/2)$ by (5.8), applying Lemma 6.2 (i) implies 

\begin{align*}
v\in C^{1+\mu}((0,T]; L^{p})\cap C^{\mu}((0,T]; W^{2,p} ). \tag{6.4}
\end{align*}\\
Hence $F\in C^{\mu}((0,T]; W^{1,p})$ by (6.2). We show that third derivatives of $v$ belong to $C^{\mu}((0,T]; L^{p})$ by applying Lemma 6.2 (i) and (iii). 

\vspace{15pt}

\begin{prop}
\begin{align*}
&\partial_t v\in C^{\mu}((0,T]; D(A_0^{1/2})),        \tag{6.5}   \\ 
&v\in C^{1+\mu}((0,T]; W^{1,p})\cap C^{\mu}((0,T]; W^{3,p}).    \tag{6.6}
\end{align*}
\end{prop}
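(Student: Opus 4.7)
The plan is to bootstrap the regularity of $v$ by applying Lemma 6.2(i) to the mild equation (6.1) within the intermediate scale $D(A_0^{1/2})$, and then to translate the conclusion back into Sobolev spaces using Lemma 4.4 together with the elliptic estimate Lemma 6.2(iii). From (6.4) we already have $v\in C^{1+\mu}((0,T]; L^p)\cap C^\mu((0,T]; W^{2,p})$, so the task is to push $v$ and $\partial_t v$ one derivative higher.

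First I would upgrade the forcing $F=e^{\lambda_0 t}\mathbb{P}v\cdot\nabla v$ from $C^\mu((0,T]; L^p)$ to $C^\mu((0,T]; W^{1,p})$. Since $p>3$, the Sobolev embedding $W^{2,p}\hookrightarrow L^\infty$ and the Leibniz rule make pointwise multiplication continuous from $W^{2,p}\times W^{1,p}$ into $W^{1,p}$; combined with the boundedness of $\mathbb{P}$ on $W^{1,p}$ from Lemma 6.2(ii), this upgrade follows immediately from (6.4).

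Next one identifies $D(A_0^{1/2})$ with $W^{1,p}\cap L^p_\sigma$. The inclusion $D(A_0^{1/2})\hookrightarrow W^{1,p}\cap L^p_\sigma$ is Lemma 4.4. For the reverse, the bounded $H^\infty$-calculus of $B_0$ from Proposition 3.1 transfers to $A_0$ via the identity $A_0^{-\alpha}=B_0^{-\alpha}$ on $L^p_\sigma$ (Proposition 4.3), so $A_0$ has bounded imaginary powers and $D(A_0^{1/2})=[L^p_\sigma,D(A_0)]_{1/2}$. Because $\mathbb{P}$ is a bounded projection on $L^p$ sending $D(B_0)$ into $D(A_0)$ (see the proof of Lemma 4.1), the retraction–coretraction principle yields $[L^p_\sigma,D(A_0)]_{1/2}=\mathbb{P}\bigl([L^p,D(B_0)]_{1/2}\bigr)$, which coincides with $\mathbb{P}(W^{1,p})=W^{1,p}\cap L^p_\sigma$ once one verifies $[L^p,D(B_0)]_{1/2}=W^{1,p}$; this last equality rests on the fact that the Navier-type boundary conditions defining $D(B_0)$ do not persist at the fractional level $1/2$. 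Consequently $\mathbb{P}F\in C^\mu((0,T]; D(A_0^{1/2}))$.

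After reinitialising at some $t_0>0$ so that $v(t_0)\in D(A_0)$, I would apply Lemma 6.2(i) to (6.1) in the Banach space $D(A_0^{1/2})$, where $-A_0$ generates an analytic semigroup with natural domain $D(A_0^{3/2})$. This delivers $v\in C^{1+\mu}((0,T]; D(A_0^{1/2}))\cap C^\mu((0,T]; D(A_0^{3/2}))$, which is (6.5) by differentiating in $t$. For (6.6), Lemma 4.4 gives $v\in C^{1+\mu}((0,T]; W^{1,p})$, and for $w\in D(A_0^{3/2})$ one has $A_0 w\in D(A_0^{1/2})\hookrightarrow W^{1,p}$, so Lemma 6.2(iii) with $m=1$ yields $w\in W^{3,p}$, hence $v\in C^\mu((0,T]; W^{3,p})$. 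The main technical obstacle is the identification $D(A_0^{1/2})=W^{1,p}\cap L^p_\sigma$: the reverse inclusion to Lemma 4.4 is not purely dual to it and relies on transferring the $H^\infty$-calculus from $B_0$ to $A_0$ and exploiting the compatibility of the Helmholtz projection with the complex interpolation structure.
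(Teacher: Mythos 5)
Your route is genuinely different from the paper's, and it contains a gap. The paper's proof of Proposition 6.3 never uses, and never proves, the identification $D(A_0^{1/2})=W^{1,p}\cap L^{p}_{\sigma}$; it only uses the one-sided embedding of Lemma 4.4. Instead of lifting the equation to the Banach space $D(A_0^{1/2})$ and reapplying Lemma 6.2 (i) there, the paper works directly with the Duhamel term: it writes $A_0^{1/2}\int_{\delta}^{t}e^{-(t-s)A_0}F\,\dd s$, integrates by parts to pull out $A_0^{-1/2}F(t)-e^{-(t-\delta)A_0}A_0^{-1/2}F(\delta)-\int_{\delta}^{t}e^{-(t-s)A_0}A_0^{-1/2}F'\,\dd s$, and then estimates $A_0^{-1/2}F'$ using the boundedness of $A_0^{-1/2}\mathbb{P}\,\D$ (Proposition 4.5) together with the already-known H\"older continuity of $v$ and $\partial_t v$ in $L^{p}$; only then does it invoke Lemma 6.2 (i), in $L^{p}$, for the last integral. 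This gives (6.5) without ever needing to know which functions lie in $D(A_0^{1/2})$, only how $D(A_0^{1/2})$ embeds into $W^{1,p}$.

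The concrete flaw in your argument is the step asserting $[L^{p},D(B_0)]_{1/2}=W^{1,p}$ ``because the boundary conditions defining $D(B_0)$ do not persist at the fractional level $1/2$.'' The domain $D(B_0)$ carries two boundary conditions: $\nabla\times u\times n=0$ (first order) and $u\cdot n=0$ (zeroth order). The first-order condition indeed washes out at interpolation parameter $1/2$, but the zeroth-order condition $u\cdot n=0$ does persist there (a Dirichlet-type condition on the normal component survives whenever $\theta m>1/p$, here $1>1/p$). So $[L^{p},D(B_0)]_{1/2}$ is the subspace of $W^{1,p}$ with vanishing normal trace, not all of $W^{1,p}$. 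It is possible that after intersecting with $L^{p}_{\sigma}$ --- whose elements already satisfy $u\cdot n=0$ in the weak trace sense --- your final claim $D(A_0^{1/2})=W^{1,p}\cap L^{p}_{\sigma}$ is correct, but the intermediate equality you use to reach it is false, and you would also still need to justify the retraction--coretraction step $[L^{p}_{\sigma},D(A_0)]_{1/2}=\mathbb{P}\bigl([L^{p},D(B_0)]_{1/2}\bigr)$ in this setting. The paper avoids all of these delicate interpolation identifications, which is precisely why it introduces Proposition 4.5 rather than trying to characterize $D(A_0^{1/2})$ exactly.
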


\vspace{5pt}

\begin{proof}
The property (6.6) follows from (6.5) by the continuous injection from $D(A^{1/2}_0)$ to $W^{1,p}$ and the elliptic regularity estimate (6.3) for $A_0v=\partial_t v+F$. We prove (6.5). We differentiate $v$ by the fractional power $A_0^{1/2}$ and apply Lemma 6.2 (i). We use the integral representation for $t\geq \delta >0$ of the form

\begin{align*}
v(t)=e^{-(t-\delta)A_0}v(\delta)-\int_{\delta}^{t}e^{-(t-s)A_0}F(s)\dd s.  
\end{align*}\\
The first term is smooth for $t> \delta$. We multiply $A_0^{1/2}$ by the second term and observe that 

\begin{align*}
A^{1/2}_{0}\int_{\delta}^{t}e^{-(t-s)A_0}F(s)\dd s
&=\int_{\delta}^{t}A_0e^{-(t-s)A_0}A^{-1/2}_{0}F(s)\dd s\\
&=\int_{\delta}^{t}\frac{\dd}{\dd s}(e^{-(t-s)A_0}A^{-1/2}_{0}F(s))\dd s
-\int_{\delta}^{t}e^{-(t-s)A_0}A^{-1/2}_{0}F'(s)\dd s\\
&=A^{-1/2}_{0}F(t)-e^{-(t-\delta)A_{0}}A^{-1/2}_{0}F(\delta) 
-\int_{\delta}^{t}e^{-(t-s)A_0}A^{-1/2}_{0}F'(s)\dd s.
\end{align*}\\
The first two terms belong to $C^{1+\mu}((0,T]; L^{p})$ since $A^{-1/2}_{0}F'\in C^{\mu}((0,T]; L^{p} )$ by $\partial_t v v\in C^{\mu}((0,T]; L^{p} )$ and Proposition 4.5. The last term belongs to $C^{\mu}((\delta,T]; L^{p} )$ by applying Lemma 6.2 (i). We proved (6.5).
\end{proof}

\vspace{15pt}
The regularity property for third derivatives (6.6) implies that $F\in C^{1+\mu}((0,T]; L^{p})\cap C^{\mu}((0,T]; W^{2,p})$. We apply Lemma 6.2 (i) for $v^{(1)}=\partial_t v$ and obtain regularity of fourth order derivatives. 

\vspace{15pt}


\begin{prop}
\begin{align*}
\partial_t^{s}v\in C^{\mu}((0,T]; W^{4-2s,p})\quad s=0,1,2.   \tag{6.7}
\end{align*}
\end{prop}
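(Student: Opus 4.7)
The plan is to differentiate the abstract Cauchy problem $\partial_t v+A_0 v=-F$ once in time, apply Lemma 6.2\,(i) to the resulting equation for $v^{(1)}=\partial_t v$, and then convert temporal regularity into spatial regularity via the elliptic estimate (6.3). The regularity $F\in C^{1+\mu}((0,T]; L^{p})\cap C^{\mu}((0,T]; W^{2,p})$ is already recorded in the paragraph preceding the proposition; it rests on (6.6), the bilinear form $F=e^{\lambda_0 t}\mathbb{P}v\cdot\nabla v$, and the embedding $W^{2,p}\hookrightarrow L^{\infty}$ valid for $p>3$. In particular, $F'\in C^{\mu}((0,T]; L^{p})$.

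To set up the Cauchy problem for $v^{(1)}$ I fix an arbitrary $\delta\in(0,T)$ and start from the restart identity $v(t)=e^{-(t-\delta)A_0}v(\delta)-\int_{\delta}^{t}e^{-(t-s)A_0}F(s)\,\dd s$. Differentiating in $t$ and performing the same integration by parts used in the proof of Proposition 6.3 (rewriting $A_0 e^{-(t-s)A_0}=\partial_s e^{-(t-s)A_0}$ so as to absorb the singular boundary contribution at $s=t$) produces
$$v^{(1)}(t)=e^{-(t-\delta)A_0}v^{(1)}(\delta)-\int_{\delta}^{t}e^{-(t-s)A_0}F'(s)\,\dd s,\qquad t\geq\delta,$$
where $v^{(1)}(\delta)\in W^{1,p}$ is well defined by Proposition 6.3. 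The first term is smooth for $t>\delta$, and Lemma 6.2\,(i) applied to the convolution with source $-F'\in C^{\mu}((\delta,T]; L^{p})$ gives $v^{(1)}\in C^{1+\mu}((\delta,T]; L^{p})\cap C^{\mu}((\delta,T]; D(A_0))$. Since $\delta>0$ is arbitrary and $D(A_0)\hookrightarrow W^{2,p}$ by Lemma 6.2\,(iii) with $m=0$, this simultaneously delivers the $s=1$ case (from the $D(A_0)$-component) and the $s=2$ case, because $\partial_t^{2}v=\partial_t v^{(1)}\in C^{\mu}((0,T]; L^{p})$.

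The remaining case $s=0$ follows by reading the PDE $A_0 v=-F-\partial_t v$ from right to left: the right-hand side now lies in $C^{\mu}((0,T]; W^{2,p})$ by the previous step together with the regularity of $F$, so the elliptic estimate (6.3) with $m=2$, applied to $v(t)-v(\tau)\in D(A_0)$, yields
$$\|v(t)-v(\tau)\|_{W^{4,p}}\leq C\|A_0(v(t)-v(\tau))\|_{W^{2,p}}\leq C|t-\tau|^{\mu}.$$
The one delicate step is the justification of the representation for $v^{(1)}$: without the $C^{\mu}((0,T]; D(A_0))$-regularity established in Proposition 6.3, the integration by parts producing the $F'$-source would not be licit, so Proposition 6.3 enters as an essential input rather than merely as a preliminary announcement. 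Once the representation is in hand, the rest is a straightforward bootstrap of time regularity (via Lemma 6.2\,(i)) into space regularity (via (6.3)).
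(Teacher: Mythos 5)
Your proposal is correct and follows essentially the same route as the paper: derive the variation-of-constants identity for $v^{(1)}$, apply Lemma 6.2\,(i) to obtain the $s=1,2$ cases, and then read off $s=0$ from the elliptic estimate (6.3) with $m=2$ applied to $A_0v=-\partial_t v-F$. The only cosmetic difference is that you obtain the restart identity for $v^{(1)}$ by integrating by parts in the mild formulation rather than by differentiating the abstract Cauchy problem and writing its variation-of-constants form; these are equivalent, and your extra remark that the preceding regularity (in particular (6.6), giving $F\in C^{1+\mu}((0,T];L^p)$) is what licenses this manipulation is a reasonable gloss on a step the paper leaves implicit.
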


\vspace{5pt}

\begin{proof}
By differentiating $\partial_t v+A_0v=F$ by time and integrating for $t\geq \delta >0$, we see that

\begin{align*}
v^{(1)}(t)=e^{-(t-\delta)A_0}v^{(1)}(\delta)-\int_{\delta}^{t}e^{-(t-s)A_0}F^{(1)}(s)\dd s.
\end{align*}\\
Since the right-hand side belongs to $C^{1+\mu}((0,T]; L^{p})\cap C^{\mu}((0,T]; W^{2,p})$ by applying Lemma 6.2 (i), the desired property (6.7) holds for $s=1,2$. The case $s=0$ follows by applying the higher regularity estimate (6.3) for $A_0v=-\partial_t v+F$.
\end{proof}

\vspace{15pt}

\begin{proof}[Proof of Theorem 6.1]
We prove

\begin{align*}
\partial_{t}^{s}v=v^{(s)}\in C^{\mu}((0,T]; W^{2k-2s,p})\quad s=0, 1,\cdots,k,   \tag{6.8}
\end{align*}\\
for any integers $k$. We prove by induction. For $k=2$, the assertion holds by Proposition 6.4. Suppose that (6.8) holds for some $k$. It suffices to show that 

\begin{align*}
v^{(s)}\in C^{\mu}((0,T]; W^{2k+2-2s,p})\quad s=0, 1,\cdots,k+1.   \tag{6.9}
\end{align*}\\
We prove (6.9) for all $s$ by induction. Since $v^{(k-1)}\in C^{1+\mu}((0,T]; L^{p})\cap C^{\mu}((0,T]; W^{2,p})$ by (6.8), by applying the same argument as in the proof of Propositions 6.3 and 6.4, we obtain (6.9) for $s=k+1$.

We suppose that (6.9) holds for $l+1\leq s\leq k+1$. Our goal is to prove (6.8) for $s=l$. By the assumption of our induction, we have

\begin{align*}
\partial_t v^{(l)}\in C^{\mu}((0,T]; W^{2k-2l,p}).  \tag{6.10}
\end{align*}\\
It follows from (6.8) and (6.2) that

\begin{align*}
&v^{(l)}\in C^{\mu}((0,T]; W^{2k-2l,p}),\\
&F^{(l)}\in C^{\mu}((0,T]; W^{2k-2l-1,p} ).
\end{align*}\\
Applying (6.3) for $A_0v^{(l)}=-\partial_t v^{(l)}+F^{(l)}$ implies that 

\begin{align*}
v^{(l)}\in C^{\mu}((0,T]; W^{2k-2l+1,p}).  
\end{align*}\\
Since $F^{(l)}\in C^{\mu}((0,T]; W^{2k-2l,p} )$ by (6.2), we obtain $v^{(l)}\in C^{\mu}((0,T]; W^{2k-2l+2,p})$ by (6.3). Thus (6.9) holds for $s=l$. We proved (6.9) for all $s$. The proof is now complete.
\end{proof}

\vspace{15pt}

\begin{rems}
\noindent
(i) (Regularity up to time zero)
In Theorem 1.1, if in addition that $u_0\in D(A_{q}^{m})$ for all $m\geq 0$, all derivatives of $u$ belong to $C^{\mu}([0,T]; L^{q})$ for $\mu\in (0,1/2)$ and $q\in (3,\infty)$.

\noindent
(ii) (Dirichlet boundary condition)
Theorem 1.1 holds also for the Dirichlet boundary condition. It is proved by Farwig-Ri \cite{FR07c} that the Stokes operator subject to the Dirichlet boundary condition generates a bounded $C_0$-analytic semigroup on $L^{p}_{\sigma}$ for $p\in (1,\infty)$. Moreover, the operator admits a bounded $H^{\infty}$-calculus. See also \cite{FR07a}, \cite{FR07b}, \cite{FR08}. The higher regularity estimate (6.3) is also known for the Stokes operator subject to the Dirichlet boundary condition \cite{Catta}, \cite[IV]{Gal}. See also \cite[III.1.5.1 Theorem]{Sohr}.
\end{rems}

\vspace{15pt}

\appendix
\section{Higher regularity estimates for the Laplace operator}

\vspace{15pt}

In Appendix A, we prove higher regularity estimates for the Neumann problem

\begin{equation*}
\begin{aligned}
-\Delta u&=f\quad \textrm{in}\ \Pi,\\
\nabla\times u\times n=g,\quad u\cdot n&=0\quad \textrm{on}\ \partial\Pi.
\end{aligned}
\tag{A.1}
\end{equation*}

\vspace{15pt}
Let $W^{1,p}_{\textrm{tan}}(\Pi)$ denote the space of all functions $g\in  W^{1,p}(\Pi)$ such that tangential components of $g$ vanish on the boundary $\partial\Pi$. 
\vspace{15pt}

\begin{lem}
Let $\Pi$ be the infinite cylinder. Let $u\in W^{2,p}(\Pi)$ be a solution of (A.1) for $f\in L^{p}(\Pi)$ and $g\in W^{1,p}_{\textrm{tan}}(\Pi)$ for $p\in (1,\infty)$. Assume that $f\in W^{m,p}(\Pi)$ and $g\in W^{m+1,p}(\Pi)$ for $m\geq 1$. Then, $u$ belongs to $W^{m+2,p}(\Pi)$ and the estimate 

\begin{align*}
||u||_{W^{m+2,p}(\Pi)}\leq C(||f||_{W^{m,p}(\Pi)}+||g||_{W^{m+1,p}(\Pi)}+||u||_{W^{1,p}(\Pi)} )   \tag{A.2}
\end{align*}\\
holds. 
\end{lem}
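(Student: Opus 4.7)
The plan is to prove (A.2) by induction on $m$, using the same $x_3$-partition-of-unity reduction as in the proof of Proposition 2.8 together with a higher-regularity estimate for the Neumann-type boundary value problem on a smooth bounded domain. The base case $m=0$ (which is just (A.2) without derivatives on the right-hand side) is obtained by applying Proposition 2.7 with $\lambda=1$ to the localized equation $-\Delta u_j+u_j=f_j+u_j$ and summing over $j$, exactly as in Proposition 2.8; the real work lies in the inductive step.

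Fix $\{\varphi_j\}_{j=-\infty}^{\infty}\subset C_c^{\infty}(\mathbb{R})$ and the smoothly bounded reference domain $\tilde{G}_j\supset G_j=D\times(j-1,j+1)$ from Proposition 2.8. A direct computation shows that $u_j=u\varphi_j$ satisfies $-\Delta u_j=f_j$ in $\tilde{G}_j$ with $\nabla\times u_j\times n=g_j$, $u_j\cdot n=0$ on $\partial\tilde{G}_j$, where
\begin{align*}
f_j=\varphi_j f-2\partial_{x_3}u\,\partial_{x_3}\varphi_j-u\,\partial_{x_3}^{2}\varphi_j,\qquad g_j=\varphi_j g+(\nabla\varphi_j\times u)\times n;
\end{align*}
since $u_j$ vanishes near the smooth ``caps'' of $\tilde{G}_j$, the boundary conditions there are trivially satisfied. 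Assuming a higher-regularity estimate
\begin{align*}
||u_j||_{W^{m+2,p}(\tilde{G}_j)}\leq C\bigl(||f_j||_{W^{m,p}(\tilde{G}_j)}+||g_j||_{W^{m+1,p}(\tilde{G}_j)}+||u_j||_{W^{1,p}(\tilde{G}_j)}\bigr)
\end{align*}
on $\tilde{G}_j$ (with $C$ independent of $j$ by translation invariance in $x_3$), I would use Leibniz to bound $||f_j||_{W^{m,p}}+||g_j||_{W^{m+1,p}}$ by $C(||f||_{W^{m,p}(G_j)}+||g||_{W^{m+1,p}(G_j)}+||u||_{W^{m+1,p}(G_j)})$, control the $W^{m+1,p}$-term through the inductive hypothesis (with $f\in W^{m-1,p}$ and $g\in W^{m,p}$), and sum over $j$ using the bounded overlap of $\{G_j\}$ to obtain (A.2).

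The main obstacle is establishing the bounded-domain higher-regularity estimate for the specific mixed boundary condition $(\nabla\times u\times n,\,u\cdot n)=(g,0)$. The $W^{2,p}$ version is furnished by \cite{AKST}, but boosting to $W^{m+2,p}$ requires running the ADN scheme on $\tilde{G}_j$: translation invariance in $x_3$ yields $\partial_{x_3}^{k}u\in W^{2,p}$ with the required bound via a difference-quotient argument; a single local chart flattening the lateral boundary $\partial D\times\mathbb{R}$ reduces the tangential derivatives $\partial_\theta^k u$ to the same argument; and the remaining normal derivatives are recovered algebraically from $-\Delta u=f$ in cylindrical coordinates combined with the boundary equation $\nabla\times u\times n=g$. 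Checking that this iteration closes at level $W^{m+2,p}$ uniformly in $j$, and that the constant $C$ in (A.2) is independent of $j$, is tedious bookkeeping but entirely standard once the $W^{2,p}$ case is in hand.
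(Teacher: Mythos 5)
Your proof follows essentially the same route as the paper's: localize with a partition of unity in $x_3$, apply a higher-regularity estimate on the bounded reference domains $\tilde{G}_j$ (which the paper obtains directly from \cite[Theorem 1.2]{AKST} as Proposition A.2, rather than re-deriving it via the ADN scheme as you outline), and then sum over $j$ and induct on $m$ to absorb the $\|u\|_{W^{m+1,p}}$ term into $\|u\|_{W^{1,p}}$. Your separate treatment of the base case via Proposition 2.7 with $\lambda=1$ is a harmless variant of the paper's uniform handling of all $m\geq 0$.
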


\vspace{15pt}
We prove Lemma A.1 by a reduction to bounded domains.
\vspace{15pt}

\begin{prop}
For smoothly bounded domains, the assertion of Lemma A.1 holds.
\end{prop}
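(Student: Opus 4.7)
The plan is to prove Proposition A.2 by induction on $m$, taking as the base case $m=0$ the $W^{2,p}$-estimate
\begin{align*}
\|u\|_{W^{2,p}(G)} \leq C(\|f\|_{L^{p}(G)} + \|g\|_{W^{1,p}(G)} + \|u\|_{W^{1,p}(G)}),
\end{align*}
which is the $\lambda=0$ limit of (2.17) (apply (2.17) with small $|\lambda|$ and absorb the zero-order term into $\|u\|_{W^{1,p}}$, as done in the proof of Proposition 2.1). It is convenient first to reduce to homogeneous boundary data by constructing a lift $\phi\in W^{m+2,p}(G)$ with $\phi\cdot n = 0$ and $\nabla\times\phi\times n = g$ on $\partial G$ via the standard trace-extension theorem, so that $v := u-\phi$ solves $-\Delta v = \tilde f \in W^{m,p}(G)$ with vanishing boundary data. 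It then suffices to prove $\|v\|_{W^{m+2,p}(G)} \leq C(\|\tilde f\|_{W^{m,p}(G)}+\|v\|_{W^{1,p}(G)})$.

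For the inductive step, I would localize with a smooth partition of unity $\{\zeta_j\}$ subordinate to a finite cover of $\bar G$. On interior patches, standard interior elliptic regularity for $-\Delta$ applied to $\zeta_j v$ yields the required regularity. On boundary patches, I would straighten $\partial G$ by a smooth local diffeomorphism $\Phi_j$: the Laplacian transforms into a second-order elliptic operator with smooth coefficients on a half-ball, and the boundary operators $(\nabla\times\cdot\times n,\ \cdot\cdot n)$ transform into analogous homogeneous Neumann-type conditions with smooth coefficients. Tangential difference quotients $D_h^{\tau}$ applied to the transformed system, combined with the base $W^{2,p}$-estimate for $D_h^{\tau}(\zeta_j v)$, give a uniform-in-$h$ bound that yields an additional tangential derivative as $h\to 0$. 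The missing pure normal second derivative is then recovered algebraically from the equation itself, $\partial_n^{2} v = -\tilde f - (\text{other second derivatives of } v) - (\text{lower-order terms})$, and the bootstrap is iterated $m$ times.

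The main obstacle is the bookkeeping of commutator terms arising from the variable coefficients introduced by $\Phi_j$, from the cutoffs $\zeta_j$, and from differentiating the transformed boundary conditions: these commutators produce lower-order terms of order at most $m+1$ in $v$. At each inductive step they can be absorbed either into the right-hand side via the $\|v\|_{W^{m+1,p}}$ bound already controlled by the previous step, or ultimately into the $\|v\|_{W^{1,p}}$ term by the standard interpolation $\|v\|_{W^{m+1,p}} \leq \varepsilon\|v\|_{W^{m+2,p}}+C_{\varepsilon}\|v\|_{W^{1,p}}$, with $\varepsilon$ chosen small enough to absorb the highest-order contribution into the left-hand side. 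No novel structural difficulty arises since $-\Delta$ together with $(\nabla\times\cdot\times n,\ \cdot\cdot n)$ forms an ADN-elliptic boundary value problem and $\bar G$ is compact and smooth; the entire content of the proposition reduces to the careful localization and commutator estimates outlined above.
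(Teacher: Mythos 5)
Your outline is mathematically sound, but it takes a genuinely different route from the paper. The paper's entire proof of Proposition A.2 is a citation: it invokes \cite[Theorem 1.2]{AKST}, where the higher-order estimates for the vector Laplacian with the boundary operators $(\nabla\times\cdot\times n,\ \cdot\,\cdot\,n)$ are obtained by reducing to the half space $\mathbb{R}^{3}_{+}$ (where the problem decouples into scalar Neumann problems for the tangential components and a scalar Dirichlet problem for the normal component, solved via explicit Poisson-kernel formulas) and then perturbing to a bent half space and localizing. That is the classical Agmon--Douglis--Nirenberg route through explicit solution operators and smallness of the boundary perturbation. You instead propose the Evans/Gilbarg--Trudinger style argument: first lift the inhomogeneous boundary data $g$ by a trace extension, then localize, straighten the boundary, bootstrap tangential regularity via difference quotients against the base $W^{2,p}$ estimate, recover the pure normal derivative from the equation, and absorb lower-order commutators by interpolation. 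Both strategies establish (A.2); yours is more self-contained at the price of considerably more bookkeeping, while the paper's is shorter because it delegates all of that to the cited resolvent estimate of AKST. A few small remarks on your version: the preliminary reduction to homogeneous boundary data is not strictly necessary (AKST handles inhomogeneous $g$ directly, and the localization already regenerates an inhomogeneous boundary term $\nabla\varphi_j\times u\times n$ from the cutoff, so you would need to lift at every step anyway); and your use of (2.17) to obtain the base case is fine, provided you write $-\Delta u = f$ as $\lambda u - \Delta u = f + \lambda u$ for a fixed $\lambda \geq \delta$ and move $\lambda\|u\|_{L^p}$ into the $\|u\|_{W^{1,p}}$ term on the right, which is exactly the remark you make.
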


\vspace{5pt}
\begin{proof}
For bounded domains, the estimate (A.2) is proved in \cite[Theorem 1.2]{AKST} by a reduction to a bent half space. 
\end{proof}

\vspace{5pt}

\begin{proof}[Proof of Lemma A.1]
We prove by a cut-off function argument as we did in the proof of Proposition 2.8. Let $\{\varphi_{j}\}_{j=-\infty}^{\infty}\subset C^{\infty}_{c}(\mathbb{R})$ be a partition of the unity such that $0\leq \varphi_j\leq 1$, $\textrm{spt}\ \varphi_j\subset [j-1,j+1]$, $\sum_{j=-\infty}^{\infty}\varphi_j(x_3)=1$, $x_3\in \mathbb{R}$. We set $u_j=u\varphi_j$ and $G_j=D\times (j-1,j+1)$. Observe that $u_j$ satisfies 

\begin{align*}
-\Delta u_j&=f_j\quad \textrm{in}\ G_j,\\
\nabla\times u_j\times n=g_j,\ u_j\cdot n&=0\quad \textrm{on}\ \partial G_j.
\end{align*}\\
for $f_j=f\varphi_j-2\partial_{x_3}u\partial_{x_3}\varphi_j-u\partial_{x_3}^{2}\varphi_{j}$ and $g_j=g\varphi_j+\nabla \varphi_j\times u\times n$. We take a smoothly bounded domain $\tilde{G}_j$ such that $G_j\subset \tilde{G}_j\subset \Pi$ and apply Proposition A.2 to estimate 

\begin{align*}
||u_j||_{W^{m+2,p}(\tilde{G}_j)}\leq C(||f_j||_{W^{m,p}(\tilde{G}_j)}+||g_j||_{W^{m+1,p}(\tilde{G}_j)}+||u_j||_{W^{1,p}(\tilde{G}_j)} ).   
\end{align*}\\
This implies

\begin{align*}
||\partial_{x}^{k}u\varphi_j||_{L^{p}(G_j)}\leq C(||f||_{W^{m,p}(G_j)}+||g||_{W^{m+1,p}(G_j)}+||u||_{W^{m+1,p}(G_j)} ).
\end{align*}\\
for $|k|=m+2$. By summing over $j$, we have

\begin{align*}
||u||_{W^{m+2,p}(\Pi)}\leq C(||f||_{W^{m,p}(\Pi)}+||g||_{W^{m+1,p}(\Pi)}+||u||_{W^{m+1,p}(\Pi)} ).
\end{align*}\\
We obtain the desired estimate by induction for $m\geq 0$. 
\end{proof}

\if{

\vspace{15pt}

\begin{prop}
When $\Pi=\mathbb{R}^{3}_{+}$, the estimate 

\begin{align*}
||\nabla^{2}u||_{L^{p}(\mathbb{R}^{3}_{+})}
\leq C(||f||_{L^{p}(\mathbb{R}^{3}_{+})}+||g||_{W^{1,p}(\mathbb{R}^{3}_{+})}  )  \tag{A.3}
\end{align*}\\
holds for solutions of (A.1). Moreover, the estimate (A.1) holds for $m\geq 1$.
\end{prop}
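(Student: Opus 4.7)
The plan is to exploit the fact that on $\mathbb{R}^{3}_{+}$ the boundary conditions fully decouple the components of $u$. With the outward normal $n = -e_{3}$, the condition $u\cdot n = 0$ gives $u^{3} = 0$ on $\{x_{3} = 0\}$, so the tangential derivatives $\partial_{1} u^{3}$ and $\partial_{2} u^{3}$ vanish there as well. A short computation then shows
\[
\nabla \times u \times n = (\partial_{1} u^{3} - \partial_{3} u^{1},\ \partial_{2} u^{3} - \partial_{3} u^{2},\ 0),
\]
and hence the boundary condition $\nabla \times u \times n = g$ reduces on $\{x_{3} = 0\}$ to $\partial_{3} u^{i} = -g^{i}$ for $i = 1, 2$, together with the natural compatibility $g^{3} = 0$ on the boundary (consistent with $g \in W^{1,p}_{\mathrm{tan}}$). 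Thus (A.1) splits into a homogeneous Dirichlet Poisson problem for $u^{3}$ and two inhomogeneous Neumann Poisson problems for $u^{1}, u^{2}$.

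Each scalar half-space problem is standard. For $u^{3}$, I would extend $u^{3}$ and $f^{3}$ oddly across $\{x_{3} = 0\}$: the extension solves $-\Delta \tilde{u}^{3} = \tilde{f}^{3}$ on all of $\mathbb{R}^{3}$, so the Calder\'on--Zygmund estimate for the Laplacian yields $\|\nabla^{2} u^{3}\|_{L^{p}(\mathbb{R}^{3}_{+})} \le C \|f^{3}\|_{L^{p}(\mathbb{R}^{3}_{+})}$. For $u^{i}$ ($i = 1, 2$), I would invoke the classical Agmon--Douglis--Nirenberg $L^{p}$-estimate for the inhomogeneous Neumann Poisson problem on a half-space,
\[
\|\nabla^{2} u^{i}\|_{L^{p}(\mathbb{R}^{3}_{+})} \le C\bigl( \|f^{i}\|_{L^{p}(\mathbb{R}^{3}_{+})} + \|g^{i}|_{\partial \mathbb{R}^{3}_{+}}\|_{W^{1-1/p,p}(\mathbb{R}^{2})} \bigr),
\]
and bound the boundary Besov norm via the trace inequality $\|g|_{\partial\mathbb{R}^{3}_{+}}\|_{W^{1-1/p,p}} \le C\|g\|_{W^{1,p}(\mathbb{R}^{3}_{+})}$. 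Summing the three components gives (A.3). Note that no lower-order $\|u\|_{W^{1,p}}$ term appears here, because the only harmonic functions on $\mathbb{R}^{3}_{+}$ satisfying the homogeneous boundary data are constants, and constants do not belong to $W^{2,p}(\mathbb{R}^{3}_{+})$.

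The higher-order version for $m \ge 1$ is then obtained by induction using tangential differentiation. Since $\partial_{j}$ ($j = 1, 2$) commutes with $\Delta$, with restriction to $\{x_{3} = 0\}$, and with the decoupled scalar boundary conditions, $\partial_{j} u$ satisfies the same type of problem with data $(\partial_{j} f, \partial_{j} g)$. Applying the base estimate to all tangential derivatives of order up to $m$ controls every tangential derivative of $u$ of order $m + 2$ in $L^{p}$ by $\|f\|_{W^{m,p}(\mathbb{R}^{3}_{+})} + \|g\|_{W^{m+1,p}(\mathbb{R}^{3}_{+})}$. The remaining normal derivatives $\partial_{3}^{k}$ are recovered algebraically componentwise from $\partial_{3}^{2} u^{i} = -f^{i} - \partial_{1}^{2} u^{i} - \partial_{2}^{2} u^{i}$, which expresses each new normal derivative in terms of quantities already controlled at the previous induction level. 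The step that will require the most care is the precise handling of the inhomogeneous Neumann trace; once it is absorbed via the trace/ADN estimate, the induction on $m$ is routine.
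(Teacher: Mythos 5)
Your proposal is correct and follows essentially the same route as the paper: decoupling (A.1) on the half-space into a homogeneous Dirichlet problem for $u^{3}$ and inhomogeneous Neumann problems for $u^{1},u^{2}$, reflection for the homogeneous parts, the trace inequality to reduce the boundary Besov norm of $g$ to $\|g\|_{W^{1,p}(\mathbb{R}^{3}_{+})}$, and tangential differentiation combined with the equation to recover normal derivatives for $m\geq 1$. The only cosmetic difference is that where you cite the classical ADN half-space Neumann estimate, the paper derives it directly by writing the Neumann correction as $u^{i}_{1}=\int_{x_3}^{\infty}e^{sA}g^{i}\,\dd s$ with the Poisson semigroup $A=(-\Delta_h)^{1/2}$ and estimating its second derivatives via fractional-power bounds.
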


\vspace{5pt}

\begin{proof}
For a half space, the Neumann problem (A.1) is equivalent to the Neumann and Dirichlet problems for horizontal and vertical components, 

\begin{align*}
-\Delta u^{i}=f^{i},\quad -\Delta u^{3}&=f^{3}\quad \textrm{in}\ \mathbb{R}^{3}_{+}, \\
-\partial_3 u^{i}=g^{i},\quad u_3&=0\quad \textrm{on}\ \partial\mathbb{R}^{3}_{+},\quad i=1,2.
\end{align*}\\
Let $e^{sA}$ denote the Poisson semigroup and $A=(-\Delta_h)^{1/2}$. We denote the horizontal gradient by $\nabla_h=(\partial_1,\partial_2)$. We set a solution of the Neumann problem of the Laplace equation by

\begin{align*}
u^{i}_{1}=\int_{x_3}^{\infty}e^{sA}g^{i}\dd s.
\end{align*}\\
Since $u^{i}_{2}=u^{i}-u^{i}_{1}$ and $u^{3}$ satisfy the Poisson equations with the homogeneous boundary condition, by a reflection to $\mathbb{R}^{3}$, we obtain 

\begin{align*}
||\nabla^{2}u^{i}_{2}||_{L^{p}(\mathbb{R}^{3}_{+})}
+||\nabla^{2}u_{3}||_{L^{p}(\mathbb{R}^{3}_{+})}\leq C||f||_{L^{p}(\mathbb{R}^{3}_{+})}.
\end{align*}\\
We estimate $u^{i}_{1}$ by using the estimate of the Dirichlet problem

\begin{align*}
||\nabla e^{sA}g^{i}||_{L^{p}(\mathbb{R}^{3}_{+})}\leq C|g|_{W^{1-1/p,p}(\partial\mathbb{R}^{3}_{+})}.
\end{align*}\\
See \cite[II.11]{Gal}. Here, $|\cdot |_{W^{1-1/p,p}}$ denotes the semi-norm,

\begin{align*}
|g|_{W^{1-1/p,p}(\partial\mathbb{R}^{3}_{+})}=\int_{\partial\mathbb{R}^{3}_{+}}\int_{\partial\mathbb{R}^{3}_{+}}\frac{|g(x)-g(y)| }{|x-y|^{p-1}}\dd x\dd y.
\end{align*}\\
Since $\partial_3 u^{i}_{1}=-e^{x_3A}g^{i}$, applying the above estimate yields 

\begin{align*}
||\nabla \partial_3 u^{i}||_{L^{p}(\mathbb{R}^{3}_{+})}\leq C|g^{i}|_{W^{1-1/p,p}(\partial\mathbb{R}^{3}_{+})}.
\end{align*}\\
We estimate horizontal derivatives by using a fractional power estimate 

\begin{align*}
||\nabla_h \varphi||_{L^{p}(\mathbb{R}^{2})}\leq C||A \varphi||_{L^{p}(\mathbb{R}^{2})},\quad \varphi\in W^{1,p}(\mathbb{R}^{2}).
\end{align*}\\
By $Au^{i}_1=\partial_3 u^{i}_1$ and taking $\varphi=\nabla_h u^{i}_1$, it follows that 

\begin{align*}
||\nabla_{h}^{2} u^{i}_{1}||_{L^{p}(\mathbb{R}^{3}_{+})}
\leq C||A\nabla_h u^{i}_1||_{L^{p}(\mathbb{R}^{3}_{+})}
=C||\nabla_h\partial_3u^{i}_1||_{L^{p}(\mathbb{R}^{3}_{+})}
\leq C'|g^{i}|_{W^{1-1/p,p}(\partial\mathbb{R}^{3}_{+})}.
\end{align*}\\
Since a trace operator acts as a bounded operator from $W^{1,p}(\mathbb{R}^{3}_{+})$ to $W^{1-1/p,p}(\partial\mathbb{R}^{3}_{+})$ \cite[Theorem II, 10.2]{Gal}, we obtain (A.3). The higher regularity estimate (A.1) for $m\geq 1$ follows by estimating horizontal derivatives by using difference quotients.
\end{proof}

\vspace{15pt}
We prove Lemma A.2 for a bent half space $\Pi=\{(x_h,x_3)\in \mathbb{R}^{3}\ |\ x_3>\omega(x_h) \}$. Let $BC^{m+2}(\mathbb{R}^{2})$ denote the space of all bounded and continuous functions up to $(m+2)$-th orders in $\mathbb{R}^{2}$ for $m\geq 0$. Let $BC^{m+2,1}(\mathbb{R}^{2})$ denote the space of all functions $\omega\in BC^{m+2}(\mathbb{R}^{2})$ such that $(m+3)$-th derivatives of $\omega$ are essentially bounded.

\vspace{15pt}

\begin{prop}
Let $\Pi$ be a bent half space for $\omega\in BC^{m+2,1}(\mathbb{R}^{2})$ and $m\geq 0$. There exists a constant $\delta>0$ such that for $f\in W^{m,p}(\Pi)$ and $g\in W^{m+1,p}(\Pi)$, solutions of (A.1) belong to $W^{m+2,p}(\Pi)$ and the estimate (A.1) holds with some constant $C$, provided that $||\nabla_h \omega||_{\infty}\leq \delta$.
\end{prop}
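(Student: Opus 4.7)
The plan is to flatten the bent boundary by an explicit diffeomorphism and then treat the resulting variable-coefficient terms as a perturbation of the flat half-space problem, with smallness controlled by $\delta=||\nabla_h\omega||_{\infty}$. I would introduce $\Phi\colon \mathbb{R}^3_+\to\Pi$ by $\Phi(y_h,y_3)=(y_h,y_3+\omega(y_h))$, with inverse $\Psi(x)=(x_h,x_3-\omega(x_h))$. Since $\omega\in BC^{m+2,1}(\mathbb{R}^2)$, the pull-back $\tilde u=u\circ\Phi$ is an isomorphism of $W^{k,p}$ for $0\leq k\leq m+2$, and $\det D\Phi\equiv 1$, so the pull-back preserves Lebesgue and Sobolev norms up to constants depending on $||\omega||_{BC^{m+2,1}}$.

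The Laplacian transforms as $(-\Delta_x u)\circ\Phi=-\Delta_y\tilde u+R_1(\tilde u)$, where $R_1$ is a second-order operator whose principal-part coefficients are bounded by $C\delta$ and whose lower-order coefficients are controlled by $||\omega||_{BC^{m+2,1}}$. The outward normal $n$ on $\partial\Pi$ pulls back to a unit vector differing from $-e_3$ by $O(\delta)$ in $W^{m+1,\infty}$, so on $\{y_3=0\}$ the Neumann conditions take the form
\begin{align*}
\nabla\times\tilde u\times e_3=\tilde g+R_2(\tilde u),\qquad \tilde u\cdot e_3=R_3(\tilde u),
\end{align*}
where $R_2,R_3$ are first-order boundary operators whose coefficients vanish when $\nabla_h\omega=0$ and hence are $O(\delta)$. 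To handle the inhomogeneous Dirichlet term $R_3(\tilde u)$, I would split $\tilde u=\tilde u_1+w$, with $w\in W^{m+2,p}(\mathbb{R}^3_+)$ a lift of $R_3(\tilde u)$ satisfying $||w||_{W^{m+2,p}}\leq C\delta\,||\tilde u||_{W^{m+2,p}}$, so that $\tilde u_1\cdot e_3=0$ on the boundary and the preceding half-space estimate applies.

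Applying that half-space estimate to $\tilde u_1$ gives
\begin{align*}
||\tilde u_1||_{W^{m+2,p}(\mathbb{R}^3_+)}\leq C\bigl(||\tilde f||_{W^{m,p}}+||\tilde g||_{W^{m+1,p}}+||R_1\tilde u||_{W^{m,p}}+||R_2\tilde u||_{W^{m+1,p}}+||\tilde u_1||_{W^{1,p}}\bigr),
\end{align*}
and since each $R_i$ contributes a factor of $\delta$ times a top-order Sobolev norm (plus lower-order pieces), choosing $\delta$ small enough that $C\delta\leq 1/2$ absorbs these into the left-hand side. Intermediate Sobolev norms are handled by interpolation between $W^{1,p}$ and $W^{m+2,p}$ with a standard $\epsilon$-argument. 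Transforming back via $\Psi$ yields the estimate (A.2) on $\Pi$; existence in $W^{m+2,p}$ follows by mollifying $\omega$ to smooth $\omega_\epsilon$, solving classically on the corresponding smooth domains, and passing to the limit using the uniform estimate.

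The main technical obstacle is that the outward normal $n$ depends nonlinearly on $\omega$, so pulling back $\nabla\times u\times n=g$ and $u\cdot n=0$ mixes all components of $\tilde u$ with coefficients that must be shown to genuinely gain a factor of $\delta$, not merely to be bounded. Arranging the lift $w$ together with the boundary perturbations $R_2, R_3$ so that the half-space estimate (which assumes $\tilde u\cdot e_3=0$ on the boundary) can be applied after a single absorption step, rather than requiring a full iterative scheme, is the delicate accounting that pins down the choice of $\delta$.
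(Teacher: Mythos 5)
Your proposal follows essentially the same route as the paper: flatten the boundary with $y_3=x_3-\omega(x_h)$, observe that the perturbation terms carry the factor $||\nabla_h\omega||_\infty\leq\delta$ in top order, remove the resulting inhomogeneous normal condition, apply the flat half-space estimate of Proposition A.3, and absorb the $\delta$-weighted top-order terms. The paper realizes the lift of the normal trace explicitly by the substitution $v_h=\tilde u_h$, $v_3=\tilde u_3-\nabla_h\omega\cdot\tilde u_h$, which makes $V\cdot e_3=0$ hold exactly on $\{y_3=0\}$ and makes it transparent (by inspection of $F$ and $\nabla G$) that $\nabla_h\omega$ multiplies every second-order occurrence of $V$, whereas your abstract lift needs the additional remark that the correct bound is $||w||_{W^{m+2,p}}\leq C\delta||\tilde u||_{W^{m+2,p}}+C||\tilde u||_{W^{m+1,p}}$ with the lower-order piece handled by interpolation, as you indicate.
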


\vspace{5pt}

\begin{proof}
We reduce the problem to a half space by changing the variable $x\in \Pi$ to

\begin{align*}
y_h&=x_h,\\
y_3&=x_3-\omega(x_h).
\end{align*}\\
We set $\tilde{u}(y)=u(x)$ and $\tilde{f}$, $\tilde{g}$ by the same way. Observe that $\tilde{u}$ satisfies

\begin{align*}
-\Delta \tilde{u}=\tilde{f}-\Delta_h\omega \partial_3 \tilde{u}+(-2\nabla_h\omega\cdot \nabla_h\partial_3\tilde{u}+|\nabla_h\omega|^{2}\partial_3^{2}\tilde{u})\quad &\textrm{in}\ \mathbb{R}^{3}_{+}.\\
\nabla\times \tilde{u}\times n=\tilde{g}+\tilde{\nabla}_{h}\omega\times \partial_3\tilde{u}\times n,\quad \tilde{u}\cdot n=0\quad &\textrm{on}\  \partial\mathbb{R}^{3}_{+}.
\end{align*}
Here, $n=n(y_h)$ is the unit outward normal vector field on $\partial \Pi$ and $\tilde{\nabla}_h=(\partial_1,\partial_2,0)$. We set $V=(v_h,v_3)$ by 

\begin{align*}
v_h&=\tilde{u}_{h}, \\
v_{3}&=\tilde{u}_{3}-\nabla_h\omega\cdot \tilde{u}_h,
\end{align*}\\
so that $V$ satisfies the Neumann problem in a half space

\begin{align*}
-\Delta V&=F\quad \textrm{in}\ \mathbb{R}^{3}_{+}, \\
\nabla\times V\times (-e_3)=G,\quad V\cdot (-e_3)&=0\quad \textrm{on}\ \partial\mathbb{R}^{3}_{+},
\end{align*}\\
for $F=(F_h,F_{3})$ and $G$ given by 

\begin{align*}
F_{h}&=\tilde{f}_{h}-\Delta_h\omega \partial_3 v_h-2\nabla_h\omega\cdot \nabla_h\partial_3v_h+|\nabla_h\omega|^{2}\partial_3^{2}v_h,\\
F_{3}&=\tilde{f}_{3}-\Delta_h \omega\partial_3 (v^{3}+\nabla_h\omega\cdot v_h)-2\nabla_h\omega\cdot \nabla_h \partial_3 (v_3+\nabla_h\omega\cdot v_h)
+|\nabla_h\omega|^{2}\partial_3^{2} (v^{3}+\nabla_h\omega\cdot v_h)    \\
&+\nabla_h(\Delta_h\omega)\cdot \nabla_h v_h+2\nabla^{2}_{h}\omega\nabla v_h+\nabla_h \omega \cdot \Delta v_h,\\
G&=\tilde{g}-\tilde{\nabla}_{h}\omega\times \partial_3 v_h\times e_3
-(\nabla \times (e_3\nabla_h\omega\cdot v_h))\times (\tilde{\nabla}_{h}\omega-e_3)
\end{align*}\\
Observe that $\nabla_h\omega$ is in front of second derivatives of $V$ in $F$ and $\nabla G$. Since $\omega\in BC^{2,1}(\mathbb{R}^{2})$, the function $\omega$ is bounded up to third derivatives. Hence there exists a constant $C$ such that 

\begin{align*}
||F||_{L^{p}(\mathbb{R}^{3}_{+})}+||G||_{W^{1,p}(\mathbb{R}^{3}_{+})}\leq C(||\tilde{f}||_{L^{p}(\mathbb{R}^{3}_{+})}+||\tilde{g}||_{W^{1,p}(\mathbb{R}^{3}_{+})}+||V||_{W^{1,p}(\mathbb{R}^{3}_{+})}+||\nabla_h\omega ||_{L^{\infty}(\mathbb{R}^{2})}||\nabla^{2}V||_{L^{p}(\mathbb{R}^{3}_{+})}  ).
\end{align*}\\
We take a constant $\delta>0$ such that $||\nabla_h\omega||_{\infty}\leq \delta$ and apply Proposition A.3 to estimate

\begin{align*}
||\nabla^{2}V||_{L^{p}(\mathbb{R}^{3}_+)}\leq C(||\tilde{f}||_{L^{p}(\mathbb{R}^{3}_{+})}+||\tilde{g}||_{W^{1,p}(\mathbb{R}^{3}_{+})}+||V||_{W^{1,p}(\mathbb{R}^{3}_{+})}+\delta||\nabla^{2}V||_{L^{p}(\mathbb{R}^{3}_{+})}  ),
\end{align*}\\
with some constant $C$, independent of $\delta$. The last term is absorbed into the left hand-side for small $\delta>0$ such that $C\delta\leq 1/2$. By changing variables, we are able to estimate the second derivatives of $u$ in $\Pi$ and obtain (A.4) for $m=0$.
\end{proof}

\vspace{5pt}

\begin{proof}[Proof of Lemma A.2]
We consider a localization. Since the boundary $\partial \Pi$ is smooth, for $x_0\in \partial\Pi$ there exist constants $\alpha,\beta>0$ and a function $x_3=\omega(x_h)$ such that 

\begin{align*}
\Pi\cap U(x_0)&=\{ (x_h,x_3)\in \mathbb{R}^{3}\ |\ \omega(x_h)<x_3<\omega(x_h)+\beta,\ |x_h-x_{0,h}|<\alpha  \},\\
\textrm{for}\ U(x_0)&=\{(x_h,x_3)\in \mathbb{R}^{3}\ |\ \omega(x_h)-\beta<x_3<\omega(x_h)+\beta,\ |x_h-x_{0,h}|<\alpha \}.
\end{align*}\\
Since $\Pi$ is bounded, we are able to take such $\alpha,\beta$, independently of $x_0$. We show that there exists a constant $r_0$ such that 

\begin{align*}
||u||_{W^{2,p}(B_{x_0}(r_0)\cap \Pi) }\leq C(||f||_{L^{p}(\Pi)}+||g||_{W^{1,p}(\Pi)}  +||u||_{W^{1,p}(\Pi)}),\quad x_0\in \partial\Pi. \tag{A.4}
\end{align*}\\
The estimate (A.1) for $m=0$ follows from a covering argument.

We may assume that $x_0=0$ and $\nabla_{h}\omega(0)=0$ by translation and rotation. We extend $x_3=\omega(x_h)$ for $x_h\in \mathbb{R}^{2}$ by the zero extension. We mollify it and denote by $\tilde{\omega}$. We take a cut-off function $\theta$ such that $\theta\equiv 1$ in $B_0(r_0)$ and $\theta\equiv 0$ in $\overline{B_{0}(2r_0)}^{c}$. We take a constant $r_0>0$ so that $B_0(2r_0)\subset \Pi\cap U(0)$. We set $\tilde{u}=u\theta$ and $\tilde{\Pi}=\{x_3>\tilde{\omega}(x_h) \}$. Since $\tilde{u}$ vanishes outside of $B_0(2r_0)$, it satisfies 

\begin{align*}
-\Delta \tilde{u}&=\tilde{f}\quad \textrm{in}\ \tilde{\Pi}, \\
\nabla\times \tilde{u}\times n=\tilde{g},\quad \tilde{u}\cdot n&=0\quad \textrm{on}\ \partial\tilde{\Pi},
\end{align*}\\
for $\tilde{f}=f\theta-2\nabla u\cdot \nabla \theta-u\Delta \theta$ and $\tilde{g}=g\theta+u\times \nabla \theta\times n$. Since $\nabla_{h}\omega(0)=0$, the norm $||\nabla_h\omega||_{\infty}$ is smaller than the constant $\delta$ in Proposition A.4 by letting $r_0>0$ sufficiently small. Hence we obtain (A.4). The proof is complete.
\end{proof}

}\fi

\vspace{15pt}

\section{$L^{p}$-resolvent estimates near $\lambda=0$}

\vspace{15pt}
In Appendix B, we prove the resolvent estimate (2.15). We apply a multiplier theorem on a UMD-space due to L.Weis \cite{Weis01}.

\vspace{15pt}
\begin{lem}
Let $p\in (1,\infty)$ and $\theta\in (\pi/2,\pi)$. There exists a constant $C$ such that 

\begin{align*}
|\lambda|||u||_{L^{p}(\Pi)}\leq C ||f||_{L^{p}(\Pi)}   \tag{B.1}
\end{align*}\\
holds for solutions of (2.1) for $f\in L^{p}$ and $\lambda\in \Sigma_{\theta}$. 
\end{lem}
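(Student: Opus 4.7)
The plan is to extend the $L^{2}$-argument of Proposition 2.5 to the full range $p\in(1,\infty)$ by substituting the scalar Mikhlin theorem with the operator-valued Fourier multiplier theorem on UMD-spaces due to Weis. Since $L^{p}(D)$ is UMD for $p\in(1,\infty)$, what has to be verified is $R$-boundedness in $B(L^{p}(D))$ of the families
\begin{align*}
\{m_{i}(\xi)\}_{\xi\in\mathbb{R}\setminus\{0\}}\quad\text{and}\quad \{\xi m'_{i}(\xi)\}_{\xi\in\mathbb{R}\setminus\{0\}},\qquad m_{i}(\xi)=\lambda(\lambda+\xi^{2}+B_{i})^{-1},
\end{align*}
uniformly in $\lambda\in\Sigma_{\theta}$, for $i=1,2$.

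First I would recall that on the smoothly bounded planar domain $D$, both $B_{1}$ and $B_{2}$ admit a bounded $H^{\infty}$-calculus on $L^{p}(D)$ (by the reference \cite{GHT} already invoked for $B_{0}$ in Proposition 3.1, applied to the two-dimensional disk). Hence $B_{i}$ is $R$-sectorial with spectral angle zero; in particular, for every $\psi\in(0,\pi)$ the family $\{\nu(\nu+B_{i})^{-1}\ |\ \nu\in\Sigma_{\psi}\}$ is $R$-bounded in $B(L^{p}(D))$. Since $\Sigma_{\theta}$ is convex and contains $[0,\infty)$, the shifted values $\nu=\lambda+\xi^{2}$ stay in $\Sigma_{\theta}$ for all $\xi\in\mathbb{R}$ and $\lambda\in\Sigma_{\theta}$, so $\{(\lambda+\xi^{2})(\lambda+\xi^{2}+B_{i})^{-1}\}$ is $R$-bounded uniformly in $\xi$ and $\lambda$.

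Next I would write
\begin{align*}
m_{i}(\xi)=\frac{\lambda}{\lambda+\xi^{2}}\cdot(\lambda+\xi^{2})(\lambda+\xi^{2}+B_{i})^{-1},
\end{align*}
and note that the scalar factor $|\lambda/(\lambda+\xi^{2})|$ is bounded by $1/|\sin\theta|$ uniformly in $\xi\in\mathbb{R}$ and $\lambda\in\Sigma_{\theta}$, exactly as in the proof of Proposition 2.5. Multiplication by a uniformly bounded scalar family preserves $R$-boundedness, so $\{m_{i}(\xi)\}$ is $R$-bounded. Differentiating gives
\begin{align*}
\xi m'_{i}(\xi)=-2\,\frac{\xi^{2}}{\lambda+\xi^{2}}\cdot\frac{\lambda}{\lambda+\xi^{2}}\cdot\bigl[(\lambda+\xi^{2})(\lambda+\xi^{2}+B_{i})^{-1}\bigr]^{2},
\end{align*}
whose two scalar factors are again uniformly bounded and whose operator factor is $R$-bounded as a product of members of an $R$-bounded family (closure of $R$-bounded families under products). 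Applying Weis's theorem \cite[I, 3.19.\ Theorem]{DeHP}, \cite{Weis01} then shows that $\mathcal{F}^{-1}m_{i}(\cdot)\mathcal{F}$ is bounded on $L^{p}(\mathbb{R};L^{p}(D))=L^{p}(\Pi)$, with operator norm independent of $\lambda\in\Sigma_{\theta}$. Using the solution formula (2.11) for the horizontal and vertical components separately yields $|\lambda|\,\|u\|_{L^{p}(\Pi)}\le C\|f\|_{L^{p}(\Pi)}$ for $f\in C^{\infty}_{c}(\Pi)$, and the general case follows by density.

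The main obstacle is the verification of the $R$-sectoriality of $B_{1}$ and $B_{2}$ on $L^{p}(D)$ with angle less than $\theta$; once the bounded $H^{\infty}$-calculus from \cite{GHT} is in hand, the rest of the argument is a routine composition of $R$-bounded families together with a scalar estimate, and the passage from Mikhlin to Weis is entirely analogous to the $p=2$ case treated in Proposition 2.5.
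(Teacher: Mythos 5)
Your treatment of the horizontal component $u_h$ via $R$-sectoriality of $B_1$, the factorization $m_1(\xi)=\tfrac{\lambda}{\lambda+\xi^2}(\lambda+\xi^2)(\lambda+\xi^2+B_1)^{-1}$, and Weis's theorem matches the paper's Proposition B.2 exactly; the passage from Mikhlin to the operator-valued UMD multiplier theorem is indeed the right replacement for the $p=2$ argument.

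However, there is a genuine gap in the treatment of the vertical component $u^{z}$. You treat $B_{1}$ and $B_{2}$ uniformly, asserting that $B_{2}$ admits a bounded $H^{\infty}$-calculus on $L^{p}(D)$ and is therefore $R$-sectorial with angle zero. But $B_{2}$ is the scalar Neumann Laplacian on the disk: its kernel contains the constants, its range is the proper closed subspace $L^{p}_{0}(D)$ of average-zero functions, and so $B_{2}$ is not a sectorial operator on $L^{p}(D)$ in the sense of Section 3. The machinery ``bounded $H^{\infty}$-calculus $\Rightarrow$ BIP $\Rightarrow$ $R$-sectorial'' presupposes sectoriality, so the appeal to \cite{GHT} and \cite{CP01} does not apply directly to $B_{2}$, and the $R$-boundedness of $\{\nu(\nu+B_{2})^{-1}\ |\ \nu\in\Sigma_{\theta}\}$ cannot be obtained this way. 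The paper resolves this by splitting $f^{z}=f_{1}+f_{2}$ with $f_{2}(x_{3})=\int_{D}f^{z}\,\dd x_{h}$ and $f_{1}=f^{z}-f_{2}$: the average-zero part $f_{1}$ is handled by passing to the restriction $\tilde{B}_{2}$ of $B_{2}$ to $L^{p}_{0}(D)$, which \emph{is} invertible, sectorial, and has bounded imaginary power of angle zero, so the Weis argument goes through; the fibrewise-constant part $f_{2}$ reduces to the one-dimensional ODE $\lambda u_{2}-\partial_{x_{3}}^{2}u_{2}=f_{2}$ on $\mathbb{R}$, treated by the scalar Mikhlin theorem. Without this decomposition your argument fails at the step where you need $R$-sectoriality of $B_{2}$, and some substitute for it (or a direct hands-on $R$-bound for $\nu(\nu+B_{2})^{-1}$ uniformly down to $\nu=0$) is required.
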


\vspace{15pt}

We prove Lemma B.1 by using the solution formula (2.11). We show that resolvent of the Laplace operators $B_{i}$ ($i=1,2$) are $R$-bounded. We recall the notion of $R$-bounded. See \cite{DeHP}. Let $X$ and $Y$ be Banach spaces. Let $B(X,Y)$ denote the space of bounded linear operators from $X$ to $Y$. We say that a family of bounded linear operators $\tau\subset B(X,Y)$ is \textit{$R$-bounded} if there exists a constant $C$ such that for all $T_1,\cdots, T_N\in\tau$, $x_1,\cdots,x_N\in X$ and $N\geq 1$,

\begin{align*}
\int_{0}^{1}\Big\|\sum_{j=1}^{N}r_{j}(t)T_{j}x_j\Big\|_{Y}\dd t 
\leq C \int_{0}^{1}\Big\|\sum_{j=1}^{N}r_{j}(t)x_j\Big\|_{X}\dd t, 
\end{align*}\\
holds, where $\{r_j\}$ is a sequence of independent symmetric $\{-1,1\}$-valued random variables on $[0,1]$, e.g., the Rademacher functions $r_j(t)=\textrm{sign}(\sin(2^{j}\pi t))$. The smallest constant $C$ such that the above inequality holds is denoted by $R(\tau)$. For two families of $R$-bounded operators $\tau, \kappa\subset B(X,Y)$, the sum and product $\tau+\kappa=\{T+K\ |\ T\in \tau,\ K\in \kappa \}$ and $\tau\kappa=\{TK\ |\ T\in \tau,\ K\in \kappa\}$ are also $R$-bounded and satisfies $R(\tau+\kappa)\leq R(\tau)+R(\kappa)$ and $R(\tau\kappa)\leq R(\tau)R(\kappa)$.

Since the $R$-boundedness is stronger than the uniform boundedness, we are able to define \textit{$R$-sectorial} operator and \textit{$R$-angle} $\phi_{L}^{R}$ for a sectorial operator $L$ by replacing the uniform bound (3.1) to the $R$-bound. When $X$ is a UMD-space, it is known that a sectorial operator $L$ with a bounded imaginary powers of power angle $\theta_{L}$, is $R$-sectorial for $\phi_{L}^{R}\leq \theta_{L}$ \cite{CP01}. When $X=Y=L^{p}(D)$ for $p\in (1,\infty)$, the condition of the $R$-boundedness is equivalent to the condition

\begin{align*}
\Bigg\|\Big(\sum_{j=1}^{N}|T_jx_j|^{2}\Big)^{1/2}\Bigg\|_{L^{p}(D)}
\leq C \Bigg\|\Big(\sum_{j=1}^{N}|x_j|^{2}\Big)^{1/2}\Bigg\|_{L^{p}(D)}.  \tag{B.2}
\end{align*}\\

We say that a function $m: \mathbb{R}\backslash \{0\}\to B(X,Y)$ is a  \textit{Fourier Multiplier} on $L^{q}(\mathbb{R}; X)$ if the operator 

\begin{align*}
Kf={\mathcal{F}}^{-1}m(\cdot){\mathcal{F}}f,\quad \textrm{for}\ f\in {\mathcal{S}}(\mathbb{R}; X),
\end{align*}\\
extends to a bounded operator from $L^{q}(\mathbb{R}; X)$ to $L^{q}(\mathbb{R}; Y)$. It is known that for UMD-spaces $X$ and $Y$, a function $m\in C^{1}(\mathbb{R}\backslash \{0\}; B(X,Y))$ is a Fourier Multiplier on $L^{q}(\mathbb{R}; X)$ for all $q\in (1,\infty)$ if $m(\xi)$ and $\xi m'(\xi)$ are $R$-bounded for $\xi \in \mathbb{R}\backslash \{0\}$ \cite[3.4 Theorem]{Weis01}.

\vspace{15pt}

We apply a multiplier theorem on $L^{q}(\mathbb{R}; L^{p}(D))$ and estimate $u_{h}$ given by the formula (2.11). We use the boundedness of the pure imaginary powers of $B_{1}$

\vspace{15pt}

\begin{prop}
The estimate 

\begin{align*}
|\lambda|||u_{h}||_{L^{p}(\Pi)}\leq C ||f_h||_{L^{p}(\Pi)}   \tag{B.3}
\end{align*}\\
holds for $u_h$ given by the formula (2.11) for $f\in C^{\infty}_{c}(\Pi)$.
\end{prop}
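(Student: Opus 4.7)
The plan is to apply Weis's operator-valued Fourier multiplier theorem \cite{Weis01} on the UMD space $X = L^{p}(D)$, $p\in(1,\infty)$, to the symbol
\begin{equation*}
m_{1}(\xi) = \lambda\,(\lambda + \xi^{2} + B_{1})^{-1}, \qquad \xi \in \mathbb{R}\setminus\{0\},
\end{equation*}
associated with the representation (2.11). Since $L^{p}(\mathbb{R}; L^{p}(D)) = L^{p}(\Pi)$ by Fubini and $\lambda\,\hat u_{h}(\xi) = m_{1}(\xi)\hat f_{h}(\xi)$, once $m_{1}$ is shown to be a Fourier multiplier on $L^{p}(\mathbb{R}; L^{p}(D))$ with multiplier norm bounded by a constant independent of $\lambda$, the estimate (B.3) follows immediately.

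What must be verified are the $R$-boundedness of the two families
\begin{equation*}
\mathcal{T}_{1} = \{m_{1}(\xi) : \xi \in \mathbb{R}\setminus\{0\}\}, \qquad \mathcal{T}_{2} = \{\xi\, m_{1}'(\xi) : \xi \in \mathbb{R}\setminus\{0\}\}
\end{equation*}
in $B(L^{p}(D))$, uniformly in $\lambda \in \Sigma_{\theta}$. The key input is that $B_{1}$ admits a bounded imaginary power on $L^{p}(D)$ (since $D$ is smoothly bounded, by \cite{Seeley71}, \cite{GHT}); combined with the UMD property of $L^{p}(D)$, the result of Cl\'ement--Pr\"uss \cite{CP01} recalled in the excerpt yields $R$-sectoriality of $B_{1}$ with $R$-angle strictly less than $\pi$. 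In particular, $\{\mu(\mu + B_{1})^{-1} : \mu \in \Sigma_{\theta}\}$ is $R$-bounded for $\theta\in(\pi/2,\pi)$.

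With this at hand, set $\mu = \lambda + \xi^{2}$. Because $\xi^{2}\geq 0$ and $\theta > \pi/2$, the point $\mu$ remains in $\Sigma_{\theta}$, and the scalar bound $|\lambda|/|\mu| \leq 1/|\sin\theta|$ established in the proof of Proposition 2.5 still applies. Writing
\begin{equation*}
m_{1}(\xi) = \frac{\lambda}{\mu} \cdot \mu(\mu + B_{1})^{-1}
\end{equation*}
exhibits $\mathcal{T}_{1}$ as a uniformly bounded scalar multiple of an $R$-bounded subfamily, so $\mathcal{T}_{1}$ is $R$-bounded with constant independent of $\lambda$. Differentiating,
\begin{equation*}
\xi\, m_{1}'(\xi) = -2\xi^{2}\,\lambda\,(\mu + B_{1})^{-2} = -2\cdot\frac{\xi^{2}}{\mu}\cdot\frac{\lambda}{\mu}\cdot \mu^{2}(\mu + B_{1})^{-2},
\end{equation*}
and the scalar factors $\xi^{2}/\mu$ and $\lambda/\mu$ are uniformly bounded, while $\{\mu^{2}(\mu + B_{1})^{-2}\}$ is $R$-bounded as the product of two $R$-bounded families. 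Hence $\mathcal{T}_{2}$ is $R$-bounded uniformly in $\lambda$.

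The main obstacle is producing $R$-bounds for the resolvent family that are uniform in the parameter $\lambda$; this is precisely what the factorization above accomplishes, reducing everything to $R$-sectoriality of $B_{1}$ combined with the elementary scalar estimate already used in Proposition 2.5. Weis's theorem then applies and furnishes $\|\mathcal{F}^{-1}m_{1}(\cdot)\mathcal{F}f_{h}\|_{L^{p}(\mathbb{R};L^{p}(D))} \leq C\|f_{h}\|_{L^{p}(\mathbb{R};L^{p}(D))}$, which is (B.3) for $f\in C^{\infty}_{c}(\Pi)$.
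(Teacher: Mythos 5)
Your proof is correct and follows essentially the same route as the paper: Weis's theorem on $L^{p}(\mathbb{R};L^{p}(D))$, $R$-sectoriality of $B_{1}$ via bounded imaginary powers and Cl\'ement--Pr\"uss, and the factorization $m_{1}(\xi)=\frac{\lambda}{\mu}\cdot\mu(\mu+B_{1})^{-1}$ with the scalar bound from Proposition 2.5. Your explicit computation of $\xi m_{1}'(\xi)$ and its scalar-factor decomposition is merely the detail the paper compresses into ``as we did in the proof of Proposition 2.5.''
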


\vspace{5pt}

\begin{proof}
We show that the function $m_1(\xi)=\lambda (\lambda+\xi^{2}+B_1)^{-1}$ is a Fourier multiplier on $L^{q}(\mathbb{R}; L^{p}(D))$ for all $q\in (1,\infty)$. Then, the estimate (B.2) follows from (2.11) by taking $q=p$. 

It suffices to show that $m_{1}(\xi)$ and $\xi m_{1}'(\xi)$ are $R$-bounded. Since the operator $B_1$ on $L^{p}(D)$ admits a bounded imaginary power of power angle zero by \cite{Seeley71}, \cite{Duong90}, it is an $R$-sectorial operator of $R$-angle zero \cite{CP01}. This means that for $\theta\in (\pi/2,\pi)$ there exists a constant $C$ such that 

\begin{align*}
R(\{\mu (\mu+B_1)^{-1}   \ |\ \mu \in \Sigma_{\theta}\})\leq C. \tag{B.4}
\end{align*}\\
Since $|\lambda|/|\lambda+\xi^{2}|\leq 1/\sin{\theta}$ for $\lambda\in \Sigma_{\theta}$ as in the proof of Proposition 2.5, it follows from (B.2) that 

\begin{align*}
R(\{\lambda(\lambda+\xi^{2})^{-1}\ |\ \xi\in \mathbb{R}\backslash \{0\}\ \})\leq \frac{C}{\sin{\theta}},\quad \lambda\in \Sigma_{\theta}.
\end{align*}\\
Hence we have

\begin{align*}
R(\{m_1(\xi)\})&=R(\{\lambda(\lambda+\xi^{2}+B_1)^{-1}\})\\
&\leq R(\{\lambda(\lambda+\xi^{2})^{-1}\})R(\{(\lambda+\xi^{2})(\lambda+\xi^{2}+B_1)^{-1}\})\\
&\leq \frac{C}{\sin{\theta}},\quad \lambda\in \Sigma_{\theta}.
\end{align*}\\
Since the resolvent is holomorphic, we are able to estimate an $R$-bound of $\xi m_1'(\xi)$ by using (B.4) as we did in the proof of Proposition 2.5. Thus the function $m_1$ is a Fourier multiplier on $L^{q}(\mathbb{R}; L^{p}(D))$ for all $q\in (1,\infty)$.
\end{proof}

\vspace{15pt}

We next estimate $u^{z}$. We set the domain of the operator $B_2$ by $D(B_2)=\{w\in W^{2,p}(D)\ |\ \partial_n w=0\ \textrm{on}\ \partial D \}$. Since the kernel of the operator $B_2$ is not empty on $L^{p}(D)$, it is not a sectorial operator in the sense of Section 3. We thus restrict the operator  to a space of average-zero functions $L^{p}_{0}(D)=\{h\in L^{p}(D)\ |\ \int_{D}h\dd x=0 \}$ by setting  

\begin{align*}
&\tilde{B}_2 w=-\Delta w\quad w\in D(\tilde{B}_2),\\
&D(\tilde{B}_2)=D(B_2)\cap L^{p}_{0}.
\end{align*}\\
Since the average of $\tilde{B}_2w$ in $D$ vanishes by the Neumann boundary condition, the operator $\tilde{B}_{2}$ is an invertible sectorial operator acting on $L^{p}_{0}$. Moreover, the operator $\tilde{B}_{2}$ admits a bounded imaginary power of power angle zero \cite[Theorem 2]{ST98}.

We show the estimate (B.1) for $u^{z}$ by applying a multiplier theorem for a resolvent of $\tilde{B}_2$. We consider functions $f_1\in C^{\infty}(\overline{\Pi})$ satisfying

\begin{equation*}
\begin{aligned}
\textrm{spt}\ f_1\ \textrm{is compact in}\ \overline{\Pi},\\
\int_{D}f_1(x_h,x_3)\dd x_h=0,\quad x_3\in \mathbb{R}.
\end{aligned}
\tag{B.5}
\end{equation*}\\
We see that ${\mathcal{F}}f_1$ is average-zero in $D$ and belongs to $L^{p}_{0}$ for $\xi\in \mathbb{R}$. Hence we use the resolvent of $\tilde{B}_2$ and set

\begin{align*}
u_1={\mathcal{F}}^{-1}(\lambda+\xi^{2}+\tilde{B}_2)^{-1}{\mathcal{F}}f_1.  \tag{B.6}
\end{align*}\\
The function $u_1$ is a solution of (2.3) for $f_1$ and its average in $D$  vanishes for each $x_3\in \mathbb{R}$.

\vspace{15pt}

\begin{prop}
There exists a constant $C$ such that 

\begin{align*}
|\lambda|||u_1||_{L^{p}(\Pi)}\leq C||f_1||_{L^{p}(\Pi)}  \tag{B.7}
\end{align*}\\
for $\lambda\in \Sigma_{\theta}$ and $f_1\in C^{\infty}(\overline{\Pi})$ satisfying (B.5).
\end{prop}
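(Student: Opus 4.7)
The plan is to mimic the proof of Proposition B.2, replacing the operator $B_1$ on $L^{p}(D)$ by the restricted operator $\tilde{B}_2$ on the closed subspace $L^{p}_0(D)$ of average-zero functions. The condition (B.5) guarantees that $({\mathcal{F}}f_1)(\cdot,\xi)\in L^{p}_0(D)$ for every $\xi\in \mathbb{R}$, since Fourier transformation in $x_3$ commutes with the cross-sectional average, so the resolvent $(\lambda+\xi^{2}+\tilde{B}_2)^{-1}$ makes sense and $u_1$ is well-defined. I would then set $m_{2}(\xi)=\lambda(\lambda+\xi^{2}+\tilde{B}_2)^{-1}$ and aim to verify that $m_2$ and $\xi m_2'(\xi)$ are $R$-bounded families in $B(L^{p}_0(D))$ for $\xi\in \mathbb{R}\backslash\{0\}$, uniformly in $\lambda\in \Sigma_{\theta}$.

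For the $R$-boundedness, I would invoke \cite[Theorem 2]{ST98} to the effect that $\tilde{B}_2$ admits a bounded imaginary power of power angle zero on $L^{p}_0(D)$. Since $L^{p}_0(D)$ is a closed subspace of the UMD space $L^{p}(D)$, it is itself UMD, and hence by \cite{CP01} the operator $\tilde{B}_2$ is $R$-sectorial of $R$-angle zero. This gives, for $\theta\in (\pi/2,\pi)$,
\begin{align*}
R\bigl(\bigl\{(\lambda+\xi^{2})(\lambda+\xi^{2}+\tilde{B}_2)^{-1} \ \big|\ \xi\in \mathbb{R}\backslash\{0\}\bigr\}\bigr)\leq C,\quad \lambda\in \Sigma_{\theta}.
\end{align*}
The scalar estimate $|\lambda|/|\lambda+\xi^{2}|\leq 1/\sin\theta$ (proved exactly as in Proposition 2.5) interpreted as an $R$-bound on scalar multiplications on $L^{p}_0(D)$, together with the product rule $R(\tau\kappa)\leq R(\tau)R(\kappa)$, yields $R(\{m_{2}(\xi)\})\leq C/\sin\theta$. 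Since $d(\mu+\tilde{B}_2)^{-1}/d\mu=-(\mu+\tilde{B}_2)^{-2}$ is holomorphic on $\Sigma_{\theta}$ and the analog of (B.4) holds for its square, the same reasoning bounds $R(\{\xi m_{2}'(\xi)\})$.

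Applying Weis's operator-valued multiplier theorem \cite[3.4 Theorem]{Weis01} on the UMD space $L^{p}_0(D)$, we conclude that $m_{2}$ is a Fourier multiplier on $L^{q}(\mathbb{R}; L^{p}_0(D))$ for every $q\in (1,\infty)$. Taking $q=p$ and using the isometric identification $L^{p}(\mathbb{R}; L^{p}_0(D))\hookrightarrow L^{p}(\Pi)$ gives the desired estimate (B.7).

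The main obstacle is bookkeeping: one must check that the UMD property and the bounded imaginary power estimate genuinely transfer to the closed subspace $L^{p}_0(D)$ (the first is automatic, the second is the content of \cite[Theorem 2]{ST98}), and that $\tilde{B}_2$ really maps $D(\tilde{B}_2)$ into $L^{p}_0(D)$, which follows by integrating $-\Delta w$ over $D$ and using the Neumann boundary condition $\partial_n w=0$. Once these are in place, the proof is structurally identical to that of Proposition B.2.
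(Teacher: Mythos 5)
Your proof is correct and follows essentially the same approach as the paper, which simply states that Proposition B.3 "follows from a multiplier theorem as in the proof of Proposition B.2." You have spelled out the details the paper leaves implicit (the transfer of the UMD property and bounded imaginary powers to the closed subspace $L^{p}_{0}(D)$, the observation that the Fourier transform in $x_3$ commutes with the cross-sectional average, and that $\tilde{B}_2$ preserves $L^{p}_{0}(D)$), all of which are checked correctly.
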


\vspace{5pt}

\begin{proof}
The assertion follows from a multiplier theorem as in the proof of Proposition B.2. 
\end{proof}

\vspace{15pt}

We subtract from $f^{z}$ the average of $f^{z}$ in $D$ and apply Proposition B.3.

\vspace{15pt}

\begin{prop}
The estimate 

\begin{align*}
|\lambda|||u^{z}||_{L^{p}(\Pi)}\leq C||f^{z}||_{L^{p}(\Pi)}  \tag{B.8}
\end{align*}\\
holds for $u^{z}$ given by the formula (2.11) for $f\in C^{\infty}_{c}(\Pi)$.
\end{prop}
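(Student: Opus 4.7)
The plan is to split $f^z$ into a part constant along each cross-section and a cross-sectionally mean-zero remainder, then invoke Proposition B.3 for the latter and a one-dimensional resolvent bound for the former.

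For $x_3\in\mathbb{R}$, set $\bar{f}(x_3) := |D|^{-1}\int_D f^z(x_h,x_3)\,\dd x_h$ and $f_1 := f^z - \bar{f}$. Then $\bar{f}\in C_c^{\infty}(\mathbb{R})$, viewed on $\overline{\Pi}$ as independent of $x_h$, while $f_1\in C^{\infty}(\overline{\Pi})$ has compact support and satisfies $\int_D f_1(x_h,x_3)\,\dd x_h = 0$ for every $x_3$; thus $f_1$ satisfies (B.5). Jensen's inequality on $D$ gives $\|\bar{f}\|_{L^p(\Pi)}\le\|f^z\|_{L^p(\Pi)}$, hence $\|f_1\|_{L^p(\Pi)}\le 2\|f^z\|_{L^p(\Pi)}$; and since $\bar{f}$ is independent of $x_h$, $\|\bar{f}\|_{L^p(\Pi)}^p = |D|\,\|\bar{f}\|_{L^p(\mathbb{R})}^p$.

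By linearity of the formula (2.11), write $u^z = u_0 + u_1$, where $u_1 = \mathcal{F}^{-1}(\lambda+\xi^2+\tilde{B}_2)^{-1}\mathcal{F}f_1$. This splitting is legitimate since $\mathcal{F}f_1(\cdot,\xi)\in L^p_0(D)$ and the resolvent $(\lambda+\xi^2+B_2)^{-1}$ preserves $L^p_0(D)$ (the Neumann boundary condition ensures that the image of a mean-zero function is mean-zero), coinciding on $L^p_0(D)$ with $(\lambda+\xi^2+\tilde{B}_2)^{-1}$. Proposition B.3 then yields $|\lambda|\,\|u_1\|_{L^p(\Pi)}\le C\|f_1\|_{L^p(\Pi)}\le 2C\|f^z\|_{L^p(\Pi)}$.

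For $u_0$, observe that any constant $c\in L^p(D)$ satisfies $(\lambda+\xi^2+B_2)^{-1}c = c/(\lambda+\xi^2)$, so
\[
u_0(x_h,x_3) = \mathcal{F}^{-1}\!\Big(\frac{\mathcal{F}\bar{f}(\xi)}{\lambda+\xi^2}\Big)(x_3)
\]
depends only on $x_3$ and coincides with the resolvent of $-\partial_{x_3}^2$ on $L^p(\mathbb{R})$ applied to $\bar{f}$. The scalar symbol $m(\xi) = \lambda/(\lambda+\xi^2)$ satisfies $|m(\xi)|\le 1/\sin\theta$ uniformly for $\lambda\in\Sigma_\theta$ (exactly as in the proof of Proposition 2.5), and writing $\xi m'(\xi) = -2\xi^2\lambda/(\lambda+\xi^2)^2 = -2(1-\lambda/(\lambda+\xi^2))\cdot\lambda/(\lambda+\xi^2)$ gives $|\xi m'(\xi)|\le 2|m|+2|m|^2\le C_\theta$. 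By Mikhlin's theorem, $|\lambda|\,\|u_0\|_{L^p(\mathbb{R})}\le C\|\bar{f}\|_{L^p(\mathbb{R})}$, and multiplying by $|D|^{1/p}$ transfers this to $|\lambda|\,\|u_0\|_{L^p(\Pi)}\le C\|\bar{f}\|_{L^p(\Pi)}\le C\|f^z\|_{L^p(\Pi)}$. Adding the two bounds yields (B.8).

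The main obstacle is that $B_2$ is not sectorial on all of $L^p(D)$, since constants lie in its kernel, and therefore the UMD-valued multiplier theorem cannot be applied directly to $\lambda(\lambda+\xi^2+B_2)^{-1}$ in the manner of Proposition B.2. The above decomposition isolates the constant-in-$D$ mode, where the problem reduces to a purely one-dimensional scalar resolvent estimate on $L^p(\mathbb{R})$, from its $L^p_0(D)$-valued complement, where the sectoriality of $\tilde{B}_2$ underlying Proposition B.3 applies.
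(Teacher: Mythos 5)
Your proposal is correct and follows essentially the same route as the paper: decompose $f^z$ into its cross-sectional average $\bar f$ (handled by the scalar Mikhlin multiplier theorem for $\lambda(\lambda+\xi^2)^{-1}$ on $L^p(\mathbb{R})$) and a cross-sectionally mean-zero remainder $f_1$ (handled by Proposition B.3 via $\tilde B_2$), then add the two resolvent bounds. You additionally normalize the average by $|D|^{-1}$ (the paper's $f_2=\int_D f^z\,\dd x_h$ is missing this factor, so your $\bar f$ is the correct definition for $f_1$ to land in $L^p_0(D)$), and you spell out that $(\lambda+\xi^2+B_2)^{-1}$ preserves $L^p_0(D)$ and agrees there with $(\lambda+\xi^2+\tilde B_2)^{-1}$ — a point the paper uses implicitly.
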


\vspace{5pt}

\begin{proof}
We set the functions $f_1$ and $f_2$ by 

\begin{align*}
f_1(x_h,x_3)&=f^{z}-f_2,\\
f_2(x_3)&=\int_{D}f^{z}(x_h,x_3)\dd x_h.
\end{align*}\\
Since $f_1\in C^{\infty}(\overline{\Pi})$ satisfies (B.5), the function $u_1$ defined by (B.6) satisfies the Neumann problem (2.3) for $f_1$ and the estimate (B.7) holds by Proposition B.3. We set 

\begin{align*}
u_2={\mathcal{F}}^{-1}(\lambda+\xi^{2})^{-1}{\mathcal{F}}f_2.
\end{align*}\\
Then, $u_2$ satisfies $\lambda u_2-\partial_{x_3}^{2}u_2=f_2$. Since $m(\xi)=\lambda(\lambda+\xi^{2})^{-1}$ satisfies $|m(\xi)|\leq |\sin\theta|^{-1}$ and $|\xi m'(\xi)|\leq 2|\sin\theta|^{-2}$, the classical Mihlin multiplier theorem \cite[6.1.6 Theorem]{BL} implies that  

\begin{align*}
|\lambda|||u_2||_{L^{p}(\mathbb{R})}\leq C||f_2||_{L^{p}(\mathbb{R})}\quad \lambda\in \Sigma_{\theta}.
\end{align*}\\
Hence $u_2$ satisfies $|\lambda| ||u_2||_{L^{p}(\Pi)}\leq C||f_{2}||_{L^{p}(\Pi)}$. Since $u^{z}$ agrees with $u_1+u_2$, we obtain (B.8).
\end{proof}

\vspace{5pt}

\begin{proof}[Proof of Lemma B.1]
The estimate (B.1) holds for $u$ given by the formula for $f\in C^{\infty}_{c}(\Pi)$ by (B.3) and (B.8). For general $f\in L^{p}(\Pi)$, we take a sequence $\{f_m\}\subset C^{\infty}_{c}(\Pi)$ such that $f_m\to f$ in $L^{p}(\Pi)$ and  obtain the desired estimate. 
\end{proof}

\vspace{15pt}

\section*{Acknowledgements}
The author is partially supported by JSPS through the Grant-in-aid for Young Scientist (B) 17K14217, Scientific Research (B) 17H02853 and Osaka City University Strategic Research Grant 2018 for young researchers.

\vspace{15pt}

\bibliographystyle{plain}

\bibliography{ref}

\end{document}